\newcommand{\pd}[2]{\frac{\partial #1}{\partial #2}}
\newcommand{\pdd}[2]{\frac{\partial^2 #1}{\partial {#2}^2}}
\newcommand{\pddd}[3]{\frac{\partial^2 #1}{\partial {#2} \partial {#3}}}
\newcommand{\sd}[2]{\frac{\textrm{d} #1}{\textrm{d} #2}}
\newcommand{\sdd}[2]{\frac{\textrm{d}^2 #1}{\textrm{d} {#2}^2}}
\newcommand{\benE}[2]{\begin{IEEEeqnarray}{#1}\label{#2}}
\newcommand{\eenE}{\end{IEEEeqnarray}}
\newcommand{\be}[1]{\begin{equation}\label{#1}}
\newcommand{\ee}{\end{equation}}
\newcommand{\bes}[1]{\begin{equation}\label{#1} \begin{split}}
\newcommand{\ees}{\end{split} \end{equation}}
\newcommand{\ben}[1]{\begin{eqnarray}\label{#1}}
\newcommand{\een}{\end{eqnarray}}
\newcommand{\re}[1]{~(\ref{#1})}
\newcommand{\ret}[2]{~(\ref{#1},\ref{#2})}
\newcommand{\bma}{\begin{pmatrix}}
\newcommand{\ema}{\end{pmatrix}}
\newcommand{\mK}[3]{K_{#1,[#2,#3]}}
\newtheorem{theorem}{Theorem}
\newtheorem{proposition}{Proposition}
\theoremstyle{definition} \newtheorem{definition}{Definition}
\newtheorem{corollary}{Corollary}
\newtheorem{lemma}{Lemma}
\begin{document}

\title{Geometric conditions for the positive definiteness of the second variation in one-dimensional problems
%AG is a Wolfson Royal Society Merit Holder and acknowledges support from a Reintegration Grant under EC Framework VII. }
}
%\subtitle{Do you have a subtitle?\\ If so, write it here}

%\titlerunning{Short form of title}        % if too long for running head

\author{Thomas Lessinnes \&
        Alain Goriely\\
        {\it  Mathematical Institute, University of Oxford}           %  \\
}

\date{}
% The correct dates will be entered by the editor

\maketitle

\begin{abstract}
Given a functional for a one-dimensional physical system, a classical problem is to minimize it by finding  stationary solutions and then checking the positive definiteness of the second variation. Establishing the positive definiteness is, in general, analytically untractable. However, we show here that a global geometric analysis of the  phase-plane trajectories associated with the stationary  solutions leads to generic  conditions for  minimality. These results provide a straightforward and direct proof of positive definiteness, or lack thereof, in many important cases. 
In particular, when applied to mechanical systems, the stability or instability  of entire classes of solutions can be obtained effortlessly from their geometry in phase-plane, as illustrated on a problem of a mass hanging from an elastic rod with intrinsic curvature.%\keywords{Stability \and Second variation \and Phase Plane}
\end{abstract}

\section{Introduction} 

A central problem in the theory of optimisation is to find a function $\theta(s)$ such that some functional $\mathscr E[\theta]$ is locally or globally minimal over a certain space of allowable  functions~\cite{ce83}. In physics, this question arises for instance when considering the state of a mechanical system described at each instant by a function $\theta$ of one (e.g. spatial) variable. Let $\mathscr E[\theta]$ be the potential energy of that state. If a state $\theta$ minimises $\mathscr E$, then this state is a stable equilibrium of the system. Indeed, by contradiction, starting at $\theta$ with no kinetic energy, the system will remain stationary since any motion would require an increase in both  potential and kinetic energies and hence violate the conservation of the total energy. 

When minimising a $C^2$ function of one variable, say $f(x)$, we typically require that two conditions are met by $f$: the first derivative of $f$ must vanish at a point,  in which case we say that the point is stationary, and the second derivative of $f$ must be positive. Points which realise both these conditions are minima of $f$. Similarly, the conditions under which a functional $\mathscr E[\theta]$ is stationary are well known: $\theta$ must satisfy the Euler-Lagrange equations associated to $\mathscr E$~\cite{ce83,gefo00}. The question of whether a stationary function $\theta$ minimises $\mathscr E$ locally is more difficult. In general, it is sufficient that the second variation of $\mathscr E$  at $\theta$ is strictly positive definite and it is necessary that it is  positive definite~\cite{gefo00}. However, for practical problems, general methods allowing to systematically check these conditions remain elusive. 

 A key issue is that the question of  positive definiteness depends on the boundary conditions. In the case of Dirichlet boundary conditions, the theory of conjugate points fully addresses the issue~\cite{gefo00}. The basic idea  is to reduce the problem by looking at the spectrum of a Sturm-Liouville operator $\mathcal S$ (cf. Section~\ref{sec-num}) associated with  the second variation~\cite{mo51,ma09}. The second variation is strictly positive if, and only if, all  eigenvalues of $\mathcal S$ are positive on the space of perturbations compatible with the boundary conditions. Manning~\cite{ma09} generalised this strategy for the Neumann problem and presented a numerical method to determine the positive definiteness of the second variation of $\mathscr E$ once an explicit expression of the equilibrium is known.

The aim of the present study is to obtain conditions for positive-definiteness of the second variation based on the geometry of the stationary solutions in phase space. Therefore, these conditions do not  require detailed knowledge of the stationary function but only  of their global properties. Here, we focus on functionals which are the sum of a quadratic term in $\theta'$ and a term $V(\theta)$ that only depends on $\theta$ and we look for minimisers of $\mathscr E$ among a class of functions satisfying given boundary conditions (either fixed or free).  

We show that the stability of a stationary function $\theta$ can be assessed in many cases by defining an index corresponding to the number of times the trajectory  ($\theta$, $\theta'$)  in phase space crosses either the horizontal axis for Dirichlet boundary conditions or the vertical lines corresponding to the extremal points of $V$ for Neumann boundary conditions. In these cases,  the stability of $\theta$ is directly established and the  the second variation of $\mathscr E$ does not need to be studied nor does the associated Sturm-Liouville problem. 

In this paper we first give  a concise statement of the problem and of the main results. Then, we define the  second variation of the functional $\mathscr E$ and summarize a numerical method to check its positive-definiteness.  A formal statement of the main results and the proofs are then given.  Finally, we illustrate these ideas with a study of the problem of determining the stability of the equilibria of a massless, planar, intrinsically coiled, elastic rod pinned to an anchor and used to suspend a massive body.

\section{Problem statement\label{sec-state}, definitions and summary of main results} 
We consider a system whose state is described by a function of one argument $\theta(s)$ where $s\in[a,b]$. Let $\mathscr E$ be the functional 
\ben{myfunc}
\mathscr E[\theta]=  \int_a^{b} \mathscr L\big( \theta(s),\theta'(s) \big)   \textrm{d} s,
\een
with
\be{FormL}
\mathscr L( \theta,\theta')= \frac{(\theta'-A)^2} 2 - V(\theta),
\ee
where $(\ )'=\sd{(\ )}{s}$, $V$ is a $C^2$ function and $A$ is a real constant. Furthermore, we require that $\sd{V}{\theta}$ and $\sdd{V}{\theta}$ do not vanish simultaneously.
 
We are interested in  functions $\theta$  that locally minimise the functional $\mathscr E$ on the space of functions $C^1([a,b])$ with the norm
\be{defnorm}
||x(s)|| = \sup_{a\leq s\leq b} |x(s)| + \sup_{a\leq s \leq b}|x'(s)|. 
\ee
 Two types of boundary conditions will be considered here: \textit{fixed boundaries} for which the value of $\theta$ is prescribed at the ends: $\theta(a)=T_a$ and $\theta(b)=T_b$ where $T_a$ and $T_b$ are real valued constants; and \textit{free boundaries} for which there is no restriction at the tips. 

To find a local minimiser, we consider admissible perturbations of $\theta$. A perturbation $\tau$ for our problem is said to be \textit{admissible} if $\tau \in C^1([a,b])\setminus\{0\}$ and the perturbed function $\theta+ \epsilon \tau$ satisfies the same boundary conditions as the function $\theta$ for all $\epsilon$. For problems with fixed boundaries, the set of admissible perturbation is
\be{}
\mathcal C^D([a,b])=\{ \tau \in C^1([a,b]):~\tau(a)=\tau(b)=0\}\setminus\{0\}.
\ee
 For free boundary conditions, all $C^1([a,b])$ perturbations have to be considered. However we first focus on the following space of admissible perturbations\footnote{The superscripts $D$ and $N$ used to denote the space of admissible perturbations $\mathcal C^D$ and $\mathcal C^N$ respectively refer to the Dirichlet and Neumann boundary conditions that naturally arise when minimising $\mathscr E$ with respectively fixed and free boundaries.} 
 \be{}
 \mathcal C^N([a,b])=\{ \tau \in C^1([a,b]):~\tau'(a)=\tau'(b)=0\}\setminus\{0\}. 
 \ee
 Then we show in Appendix~\ref{app-CnvsC1} that in all cases considered here, a function $\theta$ is minimal with respect to perturbations in $C^1([a,b])$ iff it is minimal with respect to perturbations in $\mathcal C^N$. 

A function $\theta$ is  \textit{locally minimal}  for the functional $\mathscr E$  if for all \textit{admissible perturbations} $\tau$, there exists a real number $M>0$ such that for all $\epsilon\in [-M, M]\setminus\{0\}$,
 \be{minbasics}
\mathscr E[\theta+ \epsilon \tau]> \mathscr E[\theta].
 \ee
Since $M$ can be chosen arbitrarily small, we expand the left side of the inequality\re{minbasics}: 
\be{taylorexpand}
\mathscr E[\theta+ \epsilon \tau]= \mathscr E[\theta] + \left .\sd {\mathscr E[\theta+ \epsilon \tau]}{\epsilon}\right |_{\epsilon=0}~ \epsilon +\left . \sdd{\mathscr E[\theta+ \epsilon \tau]}{\epsilon}\right |_{\epsilon=0} ~\frac{\epsilon^2}{2} + O(\epsilon^3). 
\ee
The \textit{first variation} $\delta \mathscr E_\theta: \mathcal C^X([a,b])\to \mathbb R$ (where $X$ stands for $D$ or $N$)  is defined for a given $\theta$ by
 \be{defvariations}
\delta \mathscr E_\theta [\tau]= \left .\sd {\mathscr E[\theta+ \epsilon \tau]}{\epsilon}\right |_{\epsilon=0}, 
 \ee
 Similarly, the \textit{second variation} $\delta^2\mathscr E_\theta:\mathcal C^X([a,b])\to \mathbb R$ is 
\be{defvariations2}
\delta^2 \mathscr E_\theta [\tau]= \left .\sdd {\mathscr E[\theta+ \epsilon \tau]}{\epsilon}\right |_{\epsilon=0}.
\ee
A necessary condition for $\theta$ to be minimal is \cite{gefo00}
\be{ELintro}
\forall \tau \in \mathcal C^X([a,b]):  ~\delta\mathscr E_\theta [\tau] = 0.
\ee
Functions $\theta$ which satisfy\re{ELintro} are called \textit{stationary} with respect to the functional $\mathscr E$. 

Once a stationary function $\theta$ is known, the problem is to determine if it is a minimum of the functional.
Since the first order terms in\re{taylorexpand} vanish on $\theta$, the inequality\re{minbasics} is  dominated by the second order term. If it is \textit{strongly positive}: 
\be{}
\exists k\in\mathbb R_0^+:~\forall \tau \in \mathcal C^X([a,b]): \delta^2 \mathscr E_\theta [\tau] \geq k ~||\tau||^2, 
\ee
then the stationary function is a local minimum~\cite{gefo00}. However, we show in Appendix~\ref{app-stronglypositive} that if the second variation is strongly positive with respect to the $\textrm{L}^2$ norm: 
\be{secvar}
\exists k\in\mathbb R_0^+:~\forall \tau \in \mathcal C^X([a,b]): \delta^2 \mathscr E_\theta [\tau] \geq k ~\int_a^b \tau^2(s)~\textrm{d}s, 
\ee
then, $\theta$ is minimal for the problem\ret{myfunc}{FormL}. 

If there exists $\tau$ such that the second variation is negative, then the stationary function is not a minimum. Finally, the case where $\delta^2 \mathscr E_\theta$ is positive on $C^X([a,b])$ but vanishes identically for some non-trivial $\tau$ requires the study of  higher order terms in\re{taylorexpand}, a case not considered here.

The vanishing of the first variation\re{ELintro} with either types of boundary condition implies (see e.g.~\cite{gefo00}) that stationary functions solve the Euler-Lagrange equations associated with the functional $\mathscr E$:
\ben{firstvariation}
\pd{\mathscr L}{\theta}-\sd{}{s}\pd{\mathscr L}{\theta '}=0.
\een
For free boundaries, Eq.\re{ELintro} also implies that the generalised moment $\pd{\mathscr L}{\theta'}$ associated with $\mathscr E$ vanishes at the tips:
\be{natBC}
\left .\pd{\mathscr L}{\theta'}\right|_{(\theta(a),\theta'(a))}=\left .\pd{\mathscr L}{\theta'}\right|_{(\theta(b),\theta'(b))}=0.
\ee
The conditions\re{natBC} are sometimes called \textit{natural boundary conditions} for the second order differential equation\re{firstvariation}. In our case, this leads to the Neumann boundary conditions: $\theta'(a)=\theta'(b)=A$.

For the particular form of $\mathscr L$, Eq.\re{firstvariation} takes the form
\be{thetapppot} 
\theta''(s) + \left .\sd{V}{\theta}\right|_{\theta(s)} = 0 ,\quad \textrm{with } \quad 
\left \{
\begin{split}
&\theta(a)=T_a \textrm{ and } \theta(b)=T_b~~ \textrm{fixed boundaries},\\
&\theta'(a)=A \textrm{ and } \theta'(b)=A ~~\textrm{free boundaries}.
\end {split} 
\right .
\ee

Note that stationary functions for the variational problem with fixed boundaries are described by a boundary value problem with Dirichlet boundary conditions while for the particular functional\ret{myfunc}{FormL}, the case of free boundaries leads to a BVP with Neumann boundary conditions.

If $\mathscr E$ is the energy of a physical system,  its stationary functions are called \textit{equilibria}. If an equilibrium is  a local minimum then the equilibrium is said to be \textit{stable}, otherwise  it is \textit{unstable}.

A typical analogy used in mechanics is to view
\begin{equation}\label{thetapppot1}
\theta''(s) + \left .\sd{V}{\theta}\right|_{\theta(s)} = 0
\end{equation}
as a dynamical system in phase plane $(\theta,\theta')$ where $s$ plays the role of time.  Any solution $\theta(s),\ s\in[s_1,s_2]$  of the differential equation\re{thetapppot1} defines  an oriented  curve (a \textit{trajectory} $\eta$) in phase plane through the mapping $\eta:[s_1,s_2]\to\mathbb{R}^2:s\to (\theta(s),\theta'(s))$.  The particular trajectories that correspond to solutions $\theta(s)$ of~(\ref{thetapppot}) will be denoted by $\gamma:[a,b]\to\mathbb{R}^2:s\to (\theta(s),\theta'(s))$. Since $\mathscr L$ does not depend explicitly on $s$, the associated Hamiltonian $H=\pd{\mathscr L}{\theta'} \theta'-\mathscr L $ is constant along trajectories and provides a first integral of\re{thetapppot}, the \textit{pseudo-energy}
\be{defE}
E=\frac{{\theta'}^2}{2} +V(\theta).
\ee
This analogy associates every solution of the boundary-value problem with a solution of an initial value problem of  a point mass  in a potential $V$.  An example of a trajectory in phase plane is  shown in Fig.~\ref{fig-V}   together with the motion of the point mass in the potential $V$.  The correspondence between the two problems provides a powerful tool to classify different solutions of a boundary-value problem and is known as a \textit{dynamical analogy} or \textit{Kirchhoff analogy} in the theory of one-dimensional elastic systems \cite{nigo99}. We further extend this analogy here to study the problem of stability by considering global geometric properties of the trajectories in phase plane.

\begin{figure}[h]
\centering\includegraphics[width=\linewidth]{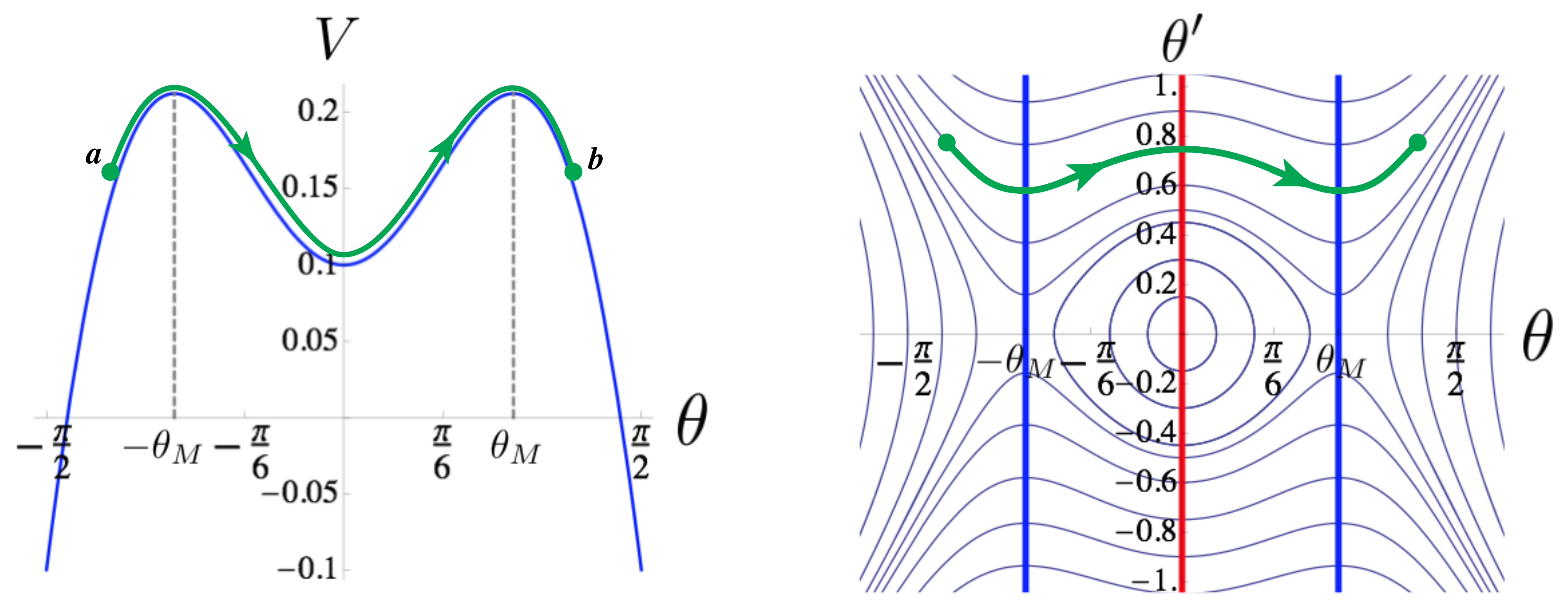}
\caption{Example of an arbitrary potential function $V$ (left), the associated phase space (right) with a few stationary trajectories, vertical max boundaries at $\pm \theta_M$ (blue online) and min boundary at $\theta=0$ (red online).} \label{fig-V}
\end{figure}

First,  for Dirichlet boundary conditions, we consider the number of times a phase plane trajectory $\eta$   crosses the horizontal axis $\Gamma_h=\{(\theta,\theta')\in\mathbb R^2\big|\theta'=0\}$ by defining the index 
\begin{equation}\label{definition}
I[\eta]=\#  \{ s\in[s_1,s_2]: \eta(s)\in\Gamma_h\}.
\end{equation}
We will establish that if $I[\gamma]=0$, then the second variation is positive definite. If $I[\gamma]\geq 2$, then the second variation is negative for some $\tau\in \mathcal C^D([a,b])$. The case $I[\gamma]=1$ requires the computation of a second global quantity.  Define  $L=L(\theta_1,\theta_2,E)$ as the flight time  from $\theta_1$ to $\theta_2$ along a solution of pseudo-energy $E$. If $\theta$ has pseudo-energy $E$ and $\theta(c)=\theta_1$, then $\theta(c+L)= \theta_2$, the second global quantity is then the derivative of the flight time with respect to $E$, that is
\begin{equation}
\alpha=\pd{L(\theta_1,\theta_2,E)}{E}.
\end{equation}
If $\alpha>0$, the second variation is positive definite whereas for $\alpha<0$, it is negative for some $\tau\in \mathcal C^D([a,b])$. The case $\alpha=0$ requires the computation of higher-order variations.

Second, for natural boundary conditions, we consider the number of time a trajectory crosses distinguished vertical boundaries. The extrema of $V(\theta)$ define vertical  lines which split the phase plane in different regions. The vertical lines that correspond to maxima of $V$ (resp. minima of $V$)  are called \textit{max-boundaries} (resp. \textit{min-boundaries}). For a given $V,$ the set of points on max-boundaries is $\Gamma_M=\{(\theta,\theta')\in\mathbb R^2\big|\, V_\theta(\theta)=0, \,V_{\theta\theta}(\theta)<0\}$  and the set of points on min-boundaries $\Gamma_m=\{(\theta,\theta')\in\mathbb R^2\big|\, V_\theta(\theta)=0, \,V_{\theta\theta}(\theta)>0\}$. 
For any trajectory $\eta$, the index 
\begin{equation}
\label{defJ}
J[\eta]=\# \left \{ s\in[s_1,s_2]: \eta(s)\in \Gamma_m\right\}-\# \left \{ s\in[s_1,s_2]: \eta(s)\in \Gamma_M \right\}
\end{equation}
 is the number of times the trajectory $\eta$ crosses min-boundaries minus the number of times it crosses max-boundaries. As an example, for the trajectory shown in Figure~\ref{fig-V} the index $J[\gamma]=1-2=-1$. We will establish that if $J[\gamma]<0$ the second variation is positive definite whereas for $J[\gamma]>0$, it is not positive definite. Hence, with no further computation, we conclude that the trajectory depicted in Figure~\ref{fig-V} is stable.
 The case $J[\gamma]=0$ requires the computation of a second global quantity, namely
 \begin{equation}
\beta=\xi(a)-\xi(b),\qquad \textrm{where\ \ \ \ }\xi(s)=\theta'(s)  \left.\sd{V}{\theta}\right|_{\theta(s)}.
\end{equation}
If $\beta\leq 0$ the second variation is not positive definite. The case $\beta>0$ remains inconclusive.
The different cases are summarised in Table 1.

\begin{table}[h!]
\begin{center}
\begin{tabular}{cc}
\begin{tabular}[t]{|l|c|c|}
\hline
\multicolumn{3}{|c|}{
{\large Dirichlet BC}}\\
\hline
\hline
\multicolumn{2}{|l|}{$I=0$ }& stable\\
\hline 
$I=1$ & $\alpha<0 $& unstable\\
&$\alpha>0 $& stable\\
\hline
\multicolumn{2}{|l|}{$I\geq 2$} & unstable\\
\hline
\end{tabular}
&
\begin{tabular}[t]{|c|c|c|}
\hline
\multicolumn{3}{|c|}{
{\large Natural BC}}\\
\hline
\hline
\multicolumn{2}{|l|}{$J<0$ }& stable\\
\hline 
$J=0$ & $\beta\leq 0 $& unstable\\
&$\beta>0 $& inconclusive\\
\hline
\multicolumn{2}{|l|}{$J>0$} & unstable\\
\hline
\end{tabular}
\end{tabular}
\end{center}
\caption{Summary of results. Stability and instability conditions for Dirichlet and Natural boundary conditions for a given stationary function $\theta$ with associated phase-plane trajectory $\gamma$. Here $I=I[\gamma]$ and $J=J[\gamma]$ as defined in the text.
}
\end{table}

\section{The second variation\label{sec-secvar}}

The second variation of $\mathscr E[\theta]$ defined by Equation~(\ref{defvariations2}) can be expanded as follows
\ben{}
\delta^2 \mathscr E_\theta [\tau]&=& \left . \left (\sdd{ ~~}{\epsilon}\mathscr E[\theta+\epsilon \tau]\right ) \right|_{\epsilon=0}\nonumber\\&=&
\int_a^{b} \left .\left(\sdd{ ~~}{\epsilon}  
\mathscr L(\theta + \epsilon \tau, \theta'+\epsilon \tau')\right) 
\right |_{\epsilon=0} \textrm{d} s\nonumber\\
&=& \int_a^{b} \Bigg\{ 
\left. \pdd {\mathscr L}{\theta} \right |_{(\theta(s),\theta'(s))}      \tau^2+
 \left.2 \pddd{\mathscr L}{\theta'}{\theta} \right |_{(\theta(s),\theta'(s))}    \tau \tau'\nonumber\\
 &&\qquad\qquad\qquad\qquad\qquad\qquad+
 \left.\pdd {\mathscr L}{\theta'}\right |_{(\theta(s),\theta'(s))}  (\tau')^2 \bigg\} ~\textrm{d}s\label{secvar1}\nonumber\\
&=&\int_a^{b} (\tau')^2 - \sdd {V}{\theta}\bigg|_{\theta(s)} \tau^2~~ \textrm{d} s ,\label{secvar2}
\een
where\re{secvar2} follows using the form of $\mathscr L$ defined in\re{FormL}.

Therefore, for a stationary function $\theta$, the sufficient condition\re{secvar} for $\theta$ to be minimal is that there exists a number $k>0$ such that 
\be{Secders}
\forall \tau\in \mathcal C^X([a,b]):\qquad\int_a^{b} (\tau')^2 -  \sdd {V}{\theta}\bigg|_{\theta(s)} \tau^2~ \textrm{d} s \geq k \int_a^b \tau^2(s)~\textrm{d}s .
\ee
Conversely, if
\be{Secderu}
\exists \tau\in \mathcal C^X([a,b]):\qquad \int_a^{b} (\tau')^2 -  \sdd {V}{\theta}\bigg|_{\theta(s)} \tau^2 \textrm{d} s <0.
\ee
then the stationary function $\theta$ is not a minimum. 
 
\begin{proposition}
If a solution $\theta(s)$ of\re{thetapppot} remains in a domain where $\sdd{V}{\theta}<0$, then this solution is minimal. Conversely, for natural boundary conditions: a solution that remains in a  domain where $\sdd{V}{\theta}>0$ is not minimal.
\label{cor-2}
\end{proposition}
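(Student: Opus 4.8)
The plan is to work directly from the reduced second-variation formula\re{secvar2} and to verify the sufficient conditions\re{Secders} for minimality and\re{Secderu} for non-minimality that have already been established in the excerpt.

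For the minimality statement I would first invoke compactness. Since $[a,b]$ is compact and $\theta$ is continuous, its image $\theta([a,b])$ is a compact subset of the region where $\sdd{V}{\theta}<0$; because $V$ is $C^2$, the continuous map $s\mapsto \sdd{V}{\theta}\big|_{\theta(s)}$ attains a strictly negative maximum on $[a,b]$. Setting $k>0$ equal to the negative of that maximum gives $-\sdd{V}{\theta}\big|_{\theta(s)}\geq k$ for all $s$. Since $(\tau')^2\geq 0$ pointwise, for every admissible $\tau$ one then has
\[
\int_a^b (\tau')^2 - \sdd{V}{\theta}\bigg|_{\theta(s)}\,\tau^2~\textrm{d}s \;\geq\; k\int_a^b \tau^2(s)~\textrm{d}s .
\]
This is exactly the $\textrm{L}^2$-strong positivity\re{Secders}, and the estimate uses no boundary data, so it holds for both $\mathcal C^D([a,b])$ and $\mathcal C^N([a,b])$. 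The minimality of $\theta$ in either case then follows from the result recalled around\re{Secders}.

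For the converse under natural boundary conditions, the plan is to exhibit a single admissible perturbation that renders the second variation negative. I would take the constant perturbation $\tau\equiv 1$. It lies in $\mathcal C^N([a,b])$ because $\tau'(a)=\tau'(b)=0$ and $\tau\neq 0$, but it is \emph{not} admissible for Dirichlet data---which is precisely why the converse is stated only for the natural case. With $\tau'\equiv 0$ the second variation collapses to
\[
\delta^2\mathscr E_\theta[1] = -\int_a^b \sdd{V}{\theta}\bigg|_{\theta(s)}~\textrm{d}s ,
\]
and by the same compactness argument the hypothesis $\sdd{V}{\theta}>0$ on $\theta([a,b])$ makes this integral strictly positive, so $\delta^2\mathscr E_\theta[1]<0$, which is exactly condition\re{Secderu}.

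I do not anticipate a genuine obstacle. The two points meriting care are the compactness step, which upgrades the pointwise sign of $\sdd{V}{\theta}$ to a \emph{uniform} lower bound $k$ and thereby yields strong---rather than merely strict---positivity, and the observation that the constant test function belongs to $\mathcal C^N$ but not to $\mathcal C^D$. This asymmetry is the whole reason the negative conclusion is available only for free boundaries: under Dirichlet data the term $\int_a^b(\tau')^2\,\textrm{d}s$ cannot be switched off by any admissible $\tau$, so no analogous counterexample is forced.
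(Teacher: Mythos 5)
Your proof is correct and follows essentially the same route as the paper: the uniform bound $k=\inf_{s\in[a,b]}\bigl(-\left.\sdd{V}{\theta}\right|_{\theta(s)}\bigr)$ to establish the $\mathrm{L}^2$-strong positivity\re{Secders}, and the constant perturbation $\tau\equiv 1$ to violate\re{Secderu} in the Neumann case. Your only addition is to spell out the compactness argument showing $k>0$, which the paper leaves implicit, along with the (correct) remark on why $\tau\equiv 1$ is unavailable for Dirichlet data.
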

\begin{proof}
The first statement follows from the fact that $\sdd{V}{\theta}<0$ implies that the condition\re{Secders} is satisfied with 
$
k=\inf_{s\in[a,b]} \left(- \left.\sdd{V}{\theta}\right|_{\theta(s)}\right).
$
 The solution is therefore minimal. 

The second statement can be established by choosing the perturbation $\tau =1$, which satisfies the natural boundary conditions and for which the integrand in\re{Secderu} is everywhere negative when $\sdd{V}{\theta}>0$. \qed
\end{proof}

\begin{proposition}
\label{cor-3}
In the case of natural boundary conditions, a constant solution, $\theta(s)=C\in  \mathbb R$,  of\re{thetapppot} 
is minimal if and only if $\left.V_{\theta\theta}\right|_{C}<0$. 
\end{proposition}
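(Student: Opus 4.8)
The plan is to reduce the problem to the explicit constant-coefficient form of the second variation and then read off the answer from Proposition~\ref{cor-2}, using the standing non-degeneracy hypothesis to close the only gap. First I would record what it means for the constant function $\theta(s)\equiv C$ to be a stationary solution of\re{thetapppot} under natural boundary conditions. Since $\theta'\equiv 0$, the Neumann condition forces $A=0$, and the Euler--Lagrange equation\re{thetapppot1} reduces to $V_\theta|_C=0$; thus $C$ must be a critical point of $V$. The standing assumption that $\sd{V}{\theta}$ and $\sdd{V}{\theta}$ never vanish simultaneously then guarantees $V_{\theta\theta}|_C\neq 0$, so that only the two cases $V_{\theta\theta}|_C<0$ and $V_{\theta\theta}|_C>0$ can arise at such a point.

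Next I would specialise the second variation\re{secvar2}. Because $\theta(s)\equiv C$ is constant, the coefficient $\sdd{V}{\theta}|_{\theta(s)}$ equals the constant $\lambda:=V_{\theta\theta}|_C$ for every $s$, so
\[
\delta^2\mathscr E_\theta[\tau]=\int_a^b (\tau')^2-\lambda\,\tau^2\ \textrm{d}s ,\qquad \tau\in\mathcal C^N([a,b]).
\]
The constant solution therefore lies entirely at the single point $C$, where $V_{\theta\theta}$ has the sign of $\lambda$, so Proposition~\ref{cor-2} applies verbatim through the following two short arguments. If $\lambda<0$, then $(\tau')^2-\lambda\tau^2\geq -\lambda\,\tau^2=|\lambda|\,\tau^2$, so the strong-positivity condition\re{Secders} holds with $k=|\lambda|>0$ and $\theta$ is minimal. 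If $\lambda>0$, I would test the constant perturbation $\tau\equiv 1\in\mathcal C^N([a,b])$, for which $\delta^2\mathscr E_\theta[1]=-\lambda(b-a)<0$; by\re{Secderu} the solution is not minimal. Combining the two cases yields the asserted equivalence.

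I expect the only real subtlety to be the role of the non-degeneracy hypothesis, rather than either estimate. Taken on its own, Proposition~\ref{cor-2} supplies only two one-directional implications and says nothing about the borderline case $\lambda=0$, in which the second variation collapses to $\int_a^b(\tau')^2\,\textrm{d}s$; this is positive but not \emph{strongly} positive, since it vanishes on $\tau\equiv 1$, so the second-variation test alone is inconclusive and would require the higher-order terms that the paper explicitly sets aside. The crucial point is that demanding $\sd{V}{\theta}$ and $\sdd{V}{\theta}$ not vanish together removes exactly this case at any critical point of $V$, which is precisely what upgrades the two separate implications of Proposition~\ref{cor-2} into the biconditional claimed here.
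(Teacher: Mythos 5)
Your proposal is correct and follows essentially the same route as the paper: the paper's proof is exactly a direct application of Proposition~\ref{cor-2} (whose two directions use the same bound $k=\left|\left.V_{\theta\theta}\right|_{C}\right|$ and the same test perturbation $\tau\equiv 1$ that you re-derive inline), with the borderline case $\left.V_{\theta\theta}\right|_{C}=0$ excluded by the standing hypothesis that $V_\theta$ and $V_{\theta\theta}$ never vanish simultaneously, just as you observe. Your additional verification that a constant Neumann solution forces $A=0$ and $\left.V_\theta\right|_{C}=0$ is a useful explicit detail the paper leaves implicit, but it does not change the argument.
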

\begin{proof}
The results follows from a direct application of Proposition~\ref{cor-2}. Note that the case $\left.V_{\theta\theta}\right|_{C}=0$ is ruled out by our assumption that $V_\theta$ and $V_{\theta\theta}$ never vanish simultaneously. \qed
\end{proof}

\section{A numerical strategy\label{sec-num}}

In this section, we summarise the main steps of the method developed in~\cite{ma09} to establish the inequality\re{Secders} or\re{Secderu} for a given stationary function $\theta$. The key ideas are given without proof as they can be found in the original work. 

Integrating the first term in\re{Secders} by part when $\theta(s)$ is a solution of\re{thetapppot} leads to
\be{StaCond}
\exists k>0:~ \forall \tau \in \mathcal D^X([a,b]):~\int_a^{b} \tau \bigg( - \tau''- \sdd{V}{\theta} \tau \bigg)~\textrm{d}s \geq k \int_a^b \tau^2(s)~\textrm{d}s,
\ee
where $X$ stands for $D$ or $N$. The spaces of admissible $C^2$ perturbations
\ben{BCtauD}
&&\mathcal D^D([a,b])\equiv \{\tau \in C^2([a,b])\setminus\{0\}:~\tau(a)=0,~\tau(b)=0\}\\
&&\label{BCtauN}\mathcal D^N([a,b])\equiv \{\tau \in C^2([a,b])\setminus\{0\}:~\tau'(a)=0,~\tau'(b)=0\}%
\een
are dense in $\mathcal C^X([a,b])$ so that\re{StaCond} is equivalent to\re{Secders} which implies\re{minbasics}.
We use the standard inner product of functions in the space $\mathcal D^X([a,b])$,
 \be{bra}
\braket{x|y}=\int_a^{b} x(s) y(s) \textrm{d}s,
\ee
to express\re{StaCond} as 
\be{StaCondOp}
\exists k>0:~~ \forall \tau \in \mathcal D^X([a,b)]:~~\braket{ \tau | \mathcal S \tau} \geq k \braket{\tau|\tau},
\ee
where 
\begin{equation}
\mathcal S =-\sdd{}{s} +f(s), \qquad f(s)=- \sdd{V}{\theta}\big|_{\theta(s)},
\end{equation}
is a second order Sturm-Liouville linear differential operator. In particular, it is self-adjoint and its spectrum on $\mathcal D^X([a,b])$ is given by a discrete set of real eigenvalues. We conclude that\re{StaCondOp} is true if and only if the eigenvalues of $\mathcal S$ on $\mathcal D^X([a,b])$ are all strictly positive
\footnote{
Finding stationary functions for the functional $\mathscr E$ amounts to solving the boundary value problem\re{thetapppot}. We had noted that in the case of fixed boundaries, this BVP has Dirichlet boundary conditions while for free boundaries, the BVP has Neumann boundary conditions.  Something similar happens here. For fixed boundaries, the stability can be assessed by solving a Sturm-Liouville problem with Dirichlet boundary conditions while for free boundaries, the S.-L. problem has Neumann boundary conditions. 
}.

The strategy developed by Manning~\cite{ma09} is divided into two steps.  First, the eigenvalues of $\mathcal S$ are computed on an asymptotically small domain $\mathcal D^X([a,\sigma])$ with $\sigma\to a^+$. These eigenvalues are referred to as \textit{inborn eigenvalues}. Second, from the Sturm-Liouville theory, we know that the  eigenvalues of $\mathcal S$ on $\mathcal D^X([a,\sigma])$ depend  smoothly on $\sigma$~\cite{ka80,koze96} (see also Appendix~\ref{app-itworks}). Therefore, as
$\sigma$ increases up to $b$ the changes of sign of eigenvalues of $\mathcal S$ on $\mathcal D^X([a,\sigma])$ are monitored together with the direction of the change (from positive to negative or vice-versa). This process allows to count the total number of negative eigenvalues when $\sigma=b$ which in turn determines the positive-definiteness of $\mathcal S$.

The inborn eigenvalues of $\mathcal S$ are determined by noting that 
\begin{equation}
\mathcal S\mathop{\to}_{\sigma\to a} \mathcal S_0=- \sdd{}{s}+ f(a).
\end{equation}
The linear and homogeneous differential operator $\mathcal S_0$ has constant coefficients and its eigenvalues on $\mathcal D^X([a,\sigma])$ are
 \benE{lll} {inborneigen}
\lambda_k=f(a)+\frac{k^2\pi^2}{(\sigma-a)^2},\quad &\textrm{with } k\in \mathbb Z\setminus\{0\}~~ &\textrm{ if } X=D,\\
\lambda_k=f(a)+\frac{k^2\pi^2}{(\sigma-a)^2},\quad &\textrm{with } k\in \mathbb Z &\textrm{ if } X=N.
\eenE
For Dirichlet boundary conditions ($X=D$), and for $\sigma$ sufficiently close to $a$, $\lambda_k>0$ for all $k$. For natural boundary conditions ($X=N$),  and for $\sigma$ sufficiently close to $a$, $ \lambda_k>0 $ for all $k\neq0$. When $k=0$, $\lambda_0=f(a)$. Hence, we have: 
\begin{proposition}
\label{prop-inborn}
For natural boundary conditions, there is either one negative inborn eigenvalue if $f(a) <0$ or none if $f(a)>0$. For Dirichlet boundary conditions there are no negative inborn eigenvalues. 
\end{proposition}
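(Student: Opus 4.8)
The plan is to read the result directly off the explicit spectrum\re{inborneigen} of the constant-coefficient operator $\mathcal S_0$, which has already been diagonalised. The whole argument rests on one observation: as the domain shrinks, $\sigma\to a^+$ sends $(\sigma-a)^2\to 0$, so the modal term $k^2\pi^2/(\sigma-a)^2$ diverges to $+\infty$ for every nonzero $k$, while the constant $f(a)$ is unaffected.

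First I would make the phrase ``$\sigma$ sufficiently close to $a$'' quantitative by choosing $\sigma$ so that $\pi^2/(\sigma-a)^2>|f(a)|$; this is possible because the left-hand side tends to $+\infty$ as $\sigma\to a^+$. Since $k^2\geq 1$ for every nonzero integer $k$, this single inequality forces
\be{inbornsign}
\lambda_k=f(a)+\frac{k^2\pi^2}{(\sigma-a)^2}>0\qquad\text{for all }k\neq 0
\ee
simultaneously, the smallest nonzero mode $k=\pm 1$ being the binding one.

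The two boundary conditions then differ only in whether the mode $k=0$ is present. For Dirichlet conditions the index set in\re{inborneigen} is $k\in\mathbb Z\setminus\{0\}$, so\re{inbornsign} already describes the entire spectrum and no inborn eigenvalue is negative. For natural conditions the set additionally contains $k=0$, whose eigenvalue $\lambda_0=f(a)$ is the unique one independent of $\sigma$; its sign alone settles the count. If $f(a)<0$ it supplies exactly one negative inborn eigenvalue, and if $f(a)>0$ there are none.

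I do not expect a genuine obstacle here: once\re{inborneigen} is granted, the statement reduces to a short sign count. The only points requiring a little care are making the domain small enough uniformly in $k$ (handled by the single bound above, using $k^2\geq 1$) and isolating the $\sigma$-independent mode $k=0$ as the sole possible source of a negative eigenvalue in the Neumann case. The borderline value $f(a)=0$ lies outside the dichotomy stated in the proposition and is not treated here.
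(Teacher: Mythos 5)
Your proposal is correct and follows essentially the same route as the paper, which likewise reads the result directly off the explicit inborn spectrum\re{inborneigen}: for $\sigma$ sufficiently close to $a$ every mode with $k\neq 0$ is positive, and the only difference between the two boundary conditions is the presence of the $\sigma$-independent mode $\lambda_0=f(a)$ in the Neumann case. Your quantitative choice $\pi^2/(\sigma-a)^2>|f(a)|$ merely makes explicit the paper's phrase ``for $\sigma$ sufficiently close to $a$'', so there is no substantive difference.
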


As $\sigma$ increases up to $b$, the eigenvalues change continuously as functions of $\sigma$. 
\begin{definition} \label{def-conj}
A  value $\sigma_c>a$ such that
\be{zeroeig}
\mathcal S\tau=0, 
\ee
has a solution on $\mathcal D^X([a,\sigma_c])$ is a \textit{conjugate point to $a$}.
\end{definition}
This definition extends the notion of conjugate points developed for problems with Dirichlet boundary conditions (see for instance~\cite{gefo00} for an introduction; equivalence is discussed in~\cite{ma09}). Conjugate points are  particularly important since at each crossing in $\sigma$, one and only one eigenvalue of $\mathcal S$ on $\mathcal D^X([a,\sigma])$ changes sign (it vanishes at $\sigma_c$). See Appendix~\ref{app-itworks} for details.  

\begin{definition}\label{def-index}
The \textit{Index}$_{[\sigma_1,\sigma_2]^X}\, [\theta]$ of a solution $\theta$ on an interval $[\sigma_1,\sigma_2]$ is defined as the number of negative eigenvalues of the operator $\mathcal S$ on $\mathcal D^X([\sigma_1,\sigma_2])$.
\end{definition}
The number of sign changes tracked by the index is critical for the problem of stability. 
\begin{proposition}\cite{ma09}
\label{prop-Index}
If $b$ is not a conjugate point to $a$, and 
\be{Index}
\textrm{Index}_{[a,b]^X}\, [\theta]=0,
\ee
then $\theta$ is locally minimal for the functional\ret{myfunc}{FormL} with boundary conditions $X$.
\end{proposition}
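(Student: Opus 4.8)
The plan is to read off the positive-definiteness of the Sturm-Liouville operator $\mathcal{S}$ directly from its spectrum on $\mathcal{D}^X([a,b])$ and then upgrade this to the strong positivity condition \re{StaCondOp}. First I would unpack the two hypotheses spectrally. By Definition~\ref{def-index}, the assumption $\textrm{Index}_{[a,b]^X}[\theta]=0$ states precisely that $\mathcal{S}$ has no negative eigenvalue on $\mathcal{D}^X([a,b])$. By Definition~\ref{def-conj}, the assumption that $b$ is not conjugate to $a$ says that \re{zeroeig} has no nontrivial solution in $\mathcal{D}^X([a,b])$, i.e. that $0$ is not an eigenvalue of $\mathcal{S}$. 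Combining these two facts, every eigenvalue of $\mathcal{S}$ is strictly positive.

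Next I would invoke the structure of the spectrum. Since $\mathcal{S}=-\sdd{}{s}+f(s)$ is a regular self-adjoint Sturm-Liouville operator on the bounded interval $[a,b]$ with $f$ continuous, its spectrum on $\mathcal{D}^X([a,b])$ is a discrete sequence of real eigenvalues $\lambda_1<\lambda_2<\cdots$ with $\lambda_k\to+\infty$, and the corresponding eigenfunctions $\phi_k$ form a complete orthonormal basis of $L^2([a,b])$ for the inner product \re{bra}. In particular the least eigenvalue $\lambda_1$ exists and, by the previous paragraph, $\lambda_1>0$. Expanding an arbitrary $\tau\in\mathcal{D}^X([a,b])$ as $\tau=\sum_k c_k\phi_k$ with $c_k=\braket{\tau|\phi_k}$ gives $\braket{\tau|\mathcal{S}\tau}=\sum_k \lambda_k c_k^2 \geq \lambda_1\sum_k c_k^2 = \lambda_1\braket{\tau|\tau}$, which is exactly \re{StaCondOp} with the explicit constant $k=\lambda_1>0$.

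Finally I would close the loop using the equivalences already established in Section~\ref{sec-num}: since $\mathcal{D}^X([a,b])$ is dense in $\mathcal{C}^X([a,b])$, the operator inequality \re{StaCondOp} is equivalent to the strong positivity \re{Secders} of the second variation, which in turn implies the minimality inequality \re{minbasics}. Hence $\theta$ is locally minimal for the functional \ret{myfunc}{FormL} with boundary conditions $X$.

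I expect the only genuinely delicate step to be the passage from ``all eigenvalues are positive'' to the uniform spectral gap $k=\lambda_1$. In infinite dimensions, mere positive-definiteness ($\braket{\tau|\mathcal{S}\tau}>0$ for all $\tau\neq 0$) does \emph{not} imply the \emph{strong} positivity \re{Secders}: one needs the infimum of the spectrum to be positive \emph{and} attained. The decisive structural input is therefore the discreteness of the Sturm-Liouville spectrum together with the accumulation of eigenvalues only at $+\infty$, which guarantees that a smallest eigenvalue $\lambda_1$ exists. I would also take care to justify that the eigenfunction expansion converges so that the termwise estimate on $\braket{\tau|\mathcal{S}\tau}$ is legitimate for every admissible $\tau$; this is standard Sturm-Liouville spectral theory, but it is precisely where the $C^2$-regularity of $\tau$ and the self-adjointness of $\mathcal{S}$ on $\mathcal{D}^X([a,b])$ are used.
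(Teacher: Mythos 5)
Your proof is correct and takes essentially the same route the paper intends: the paper states Proposition~\ref{prop-Index} without proof (deferring to Manning~\cite{ma09}), but the surrounding text of Section~\ref{sec-num} sketches exactly your chain — Index zero plus $b$ not conjugate to $a$ yield a strictly positive discrete spectrum, the smallest eigenvalue supplies the constant in\re{StaCondOp}, and density of $\mathcal D^X([a,b])$ in $\mathcal C^X([a,b])$ together with Appendix~\ref{app-stronglypositive} converts strong positivity\re{Secders} into local minimality\re{minbasics}. The caveat you flag — that a uniform spectral gap rather than mere positive-definiteness is required — is precisely what the Sturm--Liouville structure (discrete spectrum, bounded below, accumulating only at $+\infty$, cf. Appendix~\ref{app-itworks}) secures, so your completion of the paper's sketch is sound.
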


The condition that $b$ is not conjugate to $a$ is necessary since, otherwise, there exists an eigenfunction on which the second variation of $\mathscr E$ vanishes and local minimality cannot be guaranteed.

The Index can be computed according to the following method. In the limit $\sigma\to a$, the index is given by the number of negative inborn eigenvalues. Then, as $\sigma$ increases, at each  conjugate point, an eigenvalue changes sign (the one that vanishes at the conjugate point). If a positive eigenvalue becomes negative, the index increases by 1. If \textit{vice versa} a negative eigenvalue becomes positive, it decreases by 1. In this computation of the Index, it is crucial to determine \textit{all}  conjugate points. The conjugate points can be obtained by computing the solution of an auxiliary problem.  First, we consider the case of Dirichlet boundary conditions.
\begin{proposition} \label{prop-DirConj}
Let $h_1$ be the solution of the initial value problem 
\be{IVPDM}
\mathcal S h_1=-h_1''+f\, h_1=0;\qquad h_1(a)=0;\quad h_1'(a)=1.
\ee
  Then, the conjugate points to $a$ for the associated Dirichlet problem are the roots of $h_1$. 
  \end{proposition}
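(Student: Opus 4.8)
The plan is to exploit that the equation $\mathcal S\tau = 0$, written out as $-\tau'' + f(s)\tau = 0$, is a second-order \emph{linear homogeneous} ODE with continuous coefficient $f$ on $[a,b]$ (continuity of $f(s) = -V_{\theta\theta}|_{\theta(s)}$ follows from $V \in C^2$ and $\theta \in C^2$). By the existence and uniqueness theorem for such equations, every solution is defined on all of $[a,b]$ and is uniquely determined by the initial data $\big(\tau(a), \tau'(a)\big)$; in particular the solution space is two-dimensional, and any solution is automatically $C^2$, consistent with membership in the spaces $\mathcal D^D$.

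First I would observe that imposing the single Dirichlet condition $\tau(a) = 0$ cuts this two-dimensional space down to the one-dimensional subspace spanned by $h_1$. Indeed, if $\tau$ solves the ODE with $\tau(a) = 0$, then $\tau$ and $\tau'(a)\,h_1$ are both solutions sharing the initial data $\big(0, \tau'(a)\big)$ at $s = a$; by uniqueness they coincide, so $\tau = \tau'(a)\,h_1$. Since $h_1'(a) = 1 \neq 0$, the function $h_1$ is a genuinely nontrivial solution, so this subspace is one-dimensional.

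Next I would translate Definition~\ref{def-conj} into this language. A value $\sigma_c > a$ is conjugate to $a$ for the Dirichlet problem precisely when there exists a \emph{nonzero} $\tau \in \mathcal D^D([a,\sigma_c])$ with $\mathcal S\tau = 0$, that is, a nonzero solution of the ODE satisfying both $\tau(a) = 0$ and $\tau(\sigma_c) = 0$. By the preceding step any such $\tau$ equals $c\,h_1$ with $c = \tau'(a)$, and $\tau \neq 0$ forces $c \neq 0$; hence the remaining endpoint condition $\tau(\sigma_c) = 0$ holds if and only if $h_1(\sigma_c) = 0$. Conversely, if $h_1(\sigma_c) = 0$ for some $\sigma_c > a$, then $h_1$ itself is a nonzero element of $\mathcal D^D([a,\sigma_c])$ solving $\mathcal S h_1 = 0$, witnessing that $\sigma_c$ is conjugate to $a$. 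This gives the claimed equivalence: the conjugate points to $a$ are exactly the roots $\sigma_c > a$ of $h_1$.

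I expect no substantive obstacle here; the argument is essentially a dimension count powered by the uniqueness theorem, which collapses the pair of Dirichlet conditions onto the single scalar equation $h_1(\sigma_c) = 0$. The one point worth stating carefully is that $f$ need only be continuous for global existence and uniqueness of the linear IVP, so that $h_1$ is well defined on the entire interval and the reduction to a one-parameter family of solutions is legitimate.
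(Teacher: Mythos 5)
Your proof is correct and follows essentially the same route as the paper's: both use uniqueness for the linear IVP to show that any solution of $\mathcal S\tau=0$ with $\tau(a)=0$ is a nonzero multiple of $h_1$, so the extra Dirichlet condition $\tau(\sigma_c)=0$ holds iff $h_1(\sigma_c)=0$, with the converse witnessed by $h_1$ itself restricted to $[a,\sigma_c]$. Your phrasing via a dimension count makes the uniqueness step slightly more explicit than the paper's rescaling $h_1=\tau/\tau'(a)$, but the argument is the same.
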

\begin{proof}
The existence of a solution $h_1$ for this initial value problem on a closed interval is guaranteed by  the fact that $S h_1=0$ is a regular linear equation with continuous coefficients \cite[p.110]{go01}.  Assume that there exists a point $\sigma_c$ conjugate to $a$. According to Def.~\ref{def-conj},  there exists a function $\tau\in \mathcal D^D([a,\sigma_c])$ such that $\mathcal S\tau=0$. This function is such that $\tau(a)=0$ and $\tau'(a)\neq0$ (by contradiction, otherwise $\tau$ would vanish identically). Hence, the function $h_1={\tau}/{\tau'(a)}$ solves the the IVP\re{IVPDM}. In particular, it vanishes whenever $\tau$ vanishes.
 Conversely, let $\sigma_c$ be a root of $h_1$. Then the function $\tau(s)=h_1(s),\ s\in[a,\sigma_c]$, is such that $S\tau=0$ on $\mathcal D^D([a,\sigma_c])$, that is,  $\sigma_c$ is conjugate to $a$, according to Def.~\ref{def-conj}. \qed
  \end{proof}

Second, we give the
 analogous result for natural boundary conditions (given here without proof, see~\cite{ma09}).
\begin{proposition}\label{prop-NeuConj}
Let $h_2$ be the  solution of the initial value problem 
\be{IVPN}
\mathcal Sh_2=-h_2''+f\, h_2=0; \qquad h_2(a)=1;\quad h_2'(a)=0.
\ee
Then, the conjugate points to $a$ for the associated Neumann problem  are the roots of $h_2'$.
\end{proposition}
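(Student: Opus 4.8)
The plan is to mirror the argument used for the Dirichlet case in Proposition~\ref{prop-DirConj}, exploiting the fact that $\mathcal{S}\tau=0$ is a second-order linear ODE with continuous coefficients, so that any of its solutions is uniquely determined by its value and first derivative at $a$. First I would invoke the same regularity result (\cite[p.110]{go01}) to guarantee that the solution $h_2$ of the initial value problem\re{IVPN} exists on the whole closed interval $[a,b]$.

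For the forward implication, suppose $\sigma_c>a$ is a conjugate point for the Neumann problem. By Definition~\ref{def-conj} there exists a nontrivial $\tau\in\mathcal{D}^N([a,\sigma_c])$ with $\mathcal{S}\tau=0$; recalling the space\re{BCtauN}, this means $\tau'(a)=\tau'(\sigma_c)=0$. The key step is to show that $\tau$ is proportional to $h_2$. Since $\tau'(a)=0=h_2'(a)$, both $\tau$ and $\tau(a)\,h_2$ solve the same linear equation with the same first-derivative datum at $a$. If $\tau(a)=0$, then $\tau$ would have vanishing value and vanishing derivative at $a$, forcing $\tau\equiv 0$ by uniqueness and contradicting nontriviality; hence $\tau(a)\neq 0$ and $\tau=\tau(a)\,h_2$. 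Evaluating the remaining endpoint condition gives $0=\tau'(\sigma_c)=\tau(a)\,h_2'(\sigma_c)$, so $h_2'(\sigma_c)=0$ and $\sigma_c$ is a root of $h_2'$.

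For the converse, suppose $\sigma_c>a$ satisfies $h_2'(\sigma_c)=0$. Then the restriction $\tau=h_2$ on $[a,\sigma_c]$ solves $\mathcal{S}\tau=0$ and satisfies $\tau'(a)=h_2'(a)=0$ together with $\tau'(\sigma_c)=h_2'(\sigma_c)=0$, so $\tau\in\mathcal{D}^N([a,\sigma_c])$; it is nontrivial because $h_2(a)=1\neq 0$. By Definition~\ref{def-conj}, $\sigma_c$ is therefore a conjugate point to $a$, which closes the equivalence.

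The main obstacle, and the only place where the argument differs in spirit from the Dirichlet case, is the forward direction's claim that any admissible $\tau$ is a scalar multiple of $h_2$. In Proposition~\ref{prop-DirConj} the relevant datum is $\tau(a)=0$ with $\tau'(a)\neq 0$, whereas here the roles of value and derivative are \emph{swapped}: the admissible perturbations are pinned by $\tau'(a)=0$, and one must instead rule out the degenerate possibility $\tau(a)=0$. This is precisely where uniqueness for the initial value problem\re{IVPN} enters, and it is the step I would set out most carefully.
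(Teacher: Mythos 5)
Your proof is correct, and it follows exactly the approach the paper takes: the paper itself states Proposition~\ref{prop-NeuConj} without proof (deferring to~\cite{ma09}), and your argument is the natural mirror of the paper's own proof of the Dirichlet analogue, Proposition~\ref{prop-DirConj}, with the roles of $\tau(a)$ and $\tau'(a)$ swapped. You also correctly handle the one nontrivial point, namely ruling out $\tau(a)=0$ via uniqueness for the linear IVP, which is precisely the step corresponding to ruling out $\tau'(a)=0$ in the Dirichlet case.
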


Depending on the boundary condition, the solution of the initial value problem\re{IVPDM} or\re{IVPN} reduces the problem of finding the conjugate points to finding   the roots of the IVP's solution. Numerically, this can be done by monitoring both the sign of the solution $h_1$ or $h_2$ as $\sigma$ increases as well as the sign of the corresponding eigenvalues~\cite{ma09}.

\section{Dirichlet boundary conditions}\label{sec-dir}

 The case of Dirichlet boundary conditions is easier to solve  because Index$_{[a,\sigma]^D}$ always increases as $\sigma$ crosses a conjugate point. This property is a consequence of the fact that the eigenvalues of the corresponding Sturm-Liouville problem decrease monotonically with the size of the domain as shown by Dauge and Helffer~\cite{dahe93} for the Sturm-Liouville problem 
 \begin{equation}
-(p y')' + q\,  y = \lambda \, w\, y,\quad y(a)=0,\ y(\sigma)=0
\end{equation}
where $\sigma>a$ and the function $p\geq k$ with $k$ a strictly positive number. In particular, they showed that the dependence of an eigenvalue on $\sigma$ is given by the equation
\be{DHD}
\sd{\lambda}{\sigma}= - p(\sigma)\, [u'(\sigma)]^2,
\ee
where $u$ is the $L^2$ normed eigenfunction associated with $\lambda$ on $\mathcal D^D([a,\sigma])$.  Since $p$ is strictly positive, $\sd{\lambda}{\sigma}$ is negative and all eigenvalues of such Sturm-Liouville problems with Dirichlet boundary conditions decrease as $\sigma$ increases.

We recall from Proposition~\ref{prop-inborn} that for Dirichlet boundary conditions, there are no negative inborn eigenvalues. 
 Since all eigenvalues decrease with increasing $\sigma$, at a conjugate point, a positive eigenvalue becomes negative. Once it becomes negative, it keeps decreasing as $\sigma$ increases and can never become positive again. This simple fact leads to another well known result~\cite{gefo00}: 
 \begin{proposition} \label{prop-DirConjEqMin}
 For Dirichlet boundary conditions, if there exists at least one conjugate point to $a$ in $(a,b)$, then $\textrm{Index}\, [\theta]>0$, and $\theta$ is not minimal. Conversely, if there are no conjugate points in $(a,b]$, then $\theta$ is minimal. 
 \end{proposition}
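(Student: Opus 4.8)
The plan is to track the number of negative eigenvalues of $\mathcal S$ on $\mathcal D^D([a,\sigma])$ as $\sigma$ grows from $a$ to $b$, exploiting the monotone decrease of eigenvalues recorded in\re{DHD}. First I would fix the starting point: by Proposition~\ref{prop-inborn} there are no negative inborn eigenvalues, so $\textrm{Index}_{[a,\sigma]^D}\,[\theta]=0$ for $\sigma$ sufficiently close to $a$. Next, since here $p\equiv 1>0$, equation\re{DHD} gives $\sd{\lambda}{\sigma}<0$ on every eigenvalue branch, so each eigenvalue is strictly decreasing in $\sigma$. Combined with the fact (Definition~\ref{def-conj} and the remark following it) that exactly one eigenvalue vanishes at each conjugate point, strict monotonicity forces that eigenvalue to pass from positive to negative and to stay negative thereafter. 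Hence the Index is non-decreasing in $\sigma$, jumps up by exactly one at each conjugate point, and can never return to zero once a conjugate point has been crossed.

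For the forward direction I would argue that if $\sigma_c\in(a,b)$ is a conjugate point, then the eigenvalue vanishing at $\sigma_c$ is strictly negative for all $\sigma>\sigma_c$, and in particular at $\sigma=b$; therefore $\textrm{Index}_{[a,b]^D}\,[\theta]\geq 1>0$. It then remains to convert a negative eigenvalue into a destabilising perturbation: if $u$ is an $L^2$-normalised eigenfunction with eigenvalue $\lambda<0$, then $\braket{u|\mathcal S u}=\lambda<0$, which is precisely the inequality\re{Secderu}, so $\theta$ is not minimal.

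For the converse I would note that if there are no conjugate points in $(a,b]$, then no eigenvalue crosses zero as $\sigma$ ranges over $(a,b]$; since the Index starts at $0$ and can change only at conjugate points, it stays equal to $0$ throughout, so in particular $\textrm{Index}_{[a,b]^D}\,[\theta]=0$. Because $b$ itself is not a conjugate point, Proposition~\ref{prop-Index} applies directly and yields local minimality of $\theta$.

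The analytic content is essentially inherited: the strict monotonicity\re{DHD} of Dauge--Helffer and the inborn count of Proposition~\ref{prop-inborn} do the heavy lifting, while Proposition~\ref{prop-Index} supplies the sufficient condition. The step I expect to demand the most care is justifying that the sign change at a conjugate point is genuinely from positive to negative rather than a tangential touching of zero, which is exactly where the strictness $\sd{\lambda}{\sigma}<0$ is indispensable; one must also be sure that no eigenvalue branch is created or lost between conjugate points, so that the count of negative eigenvalues at $\sigma=b$ equals the number of conjugate points in $(a,b)$ and the bookkeeping is exact.
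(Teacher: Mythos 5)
Your proof is correct and takes essentially the same route as the paper, whose own justification is precisely the paragraph preceding the proposition: Proposition~\ref{prop-inborn} gives zero negative inborn eigenvalues, the Dauge--Helffer relation\re{DHD} with $p\equiv 1$ forces every eigenvalue to be strictly decreasing in $\sigma$ (so at each conjugate point a positive eigenvalue becomes negative and can never return), a negative eigenvalue at $\sigma=b$ yields\re{Secderu} and hence non-minimality, and the converse follows from Proposition~\ref{prop-Index} since $b$ is not conjugate to $a$. Your closing concerns about tangential zero-touchings and the creation or loss of eigenvalue branches are exactly the points the paper covers via the strictness of $\sd{\lambda}{\sigma}<0$ and the continuity and non-crossing results collected in Appendix~\ref{app-itworks}.
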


The open question is therefore to establish whether there is a conjugate point in $(a,b]$. We first consider the two simplest cases for the functional $\mathscr E$ defined in\ret{myfunc}{FormL}. We recall that the index $I[\gamma]$ is defined in\re{definition} as the number of times $\theta'(s)$ vanishes on $s\in[a,b]$. 
\begin{theorem} \label{theo-IndI}
Let $\theta$ be a stationary function of $\mathscr E$ such that $\theta'$ does not vanish uniformly on the interval $[a,b]$. Let $\gamma:s\in[a,b]\to(\theta(s),\theta'(s))$ be its associated phase plane trajectory. If $I[\gamma]\geq2$, then $\theta$ is not locally minimal for the functional $\mathcal E$ on $\mathcal D^D([a,b])$. If $I[\gamma]=0$, then $\theta$ is locally minimal for the functional $\mathcal E$ on $\mathcal D^D([a,b])$.
\end{theorem}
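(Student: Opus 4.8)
The crux of the proof is the observation that the derivative $\theta'$ of a stationary function is itself a solution of the Jacobi equation $\mathcal S u = 0$. Indeed, differentiating the Euler--Lagrange equation $\theta'' + V_\theta|_{\theta(s)} = 0$ with respect to $s$ gives $(\theta')'' + V_{\theta\theta}|_{\theta(s)}\,\theta' = 0$, i.e.\ $\mathcal S\theta' = -(\theta')'' + f\,\theta' = 0$ with $f = -V_{\theta\theta}|_{\theta(s)}$. The plan is therefore to work entirely with two solutions of the second-order linear equation $\mathcal S u = 0$: the field $h_1$ of Proposition~\ref{prop-DirConj}, whose zeros in $(a,b]$ are exactly the conjugate points to $a$, and the field $\theta'$, whose zeros in $[a,b]$ are exactly the instants at which $\gamma$ meets $\Gamma_h$ and are thus counted by $I[\gamma]$. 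The statement then follows by comparing the zeros of these two solutions through the Sturm separation theorem, and by invoking Proposition~\ref{prop-DirConjEqMin}, which already converts the presence or absence of conjugate points into (non-)minimality.

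I would first treat the case $I[\gamma]=0$. Here $\theta'$ has no zero on $[a,b]$; in particular $\theta'(a)\ne 0$, so the Wronskian of $h_1$ and $\theta'$ at $s=a$ equals $-\theta'(a)\ne0$ and the two solutions are linearly independent. If there were a conjugate point $\sigma_c\in(a,b]$, then $h_1$ would vanish at the two distinct points $a$ and $\sigma_c$, and Sturm separation would force $\theta'$ to vanish somewhere in $(a,\sigma_c)\subseteq[a,b]$, contradicting $I[\gamma]=0$. Hence there is no conjugate point in $(a,b]$ and Proposition~\ref{prop-DirConjEqMin} yields local minimality.

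For the case $I[\gamma]\ge2$ I would produce a conjugate point strictly inside $(a,b)$ and again appeal to Proposition~\ref{prop-DirConjEqMin}. Pick two zeros $p<q$ of $\theta'$ in $[a,b]$. If neither is an endpoint, $h_1$ and $\theta'$ are independent (Wronskian $-\theta'(a)\ne0$) and Sturm separation places a zero of $h_1$ in $(p,q)\subset(a,b)$. If $\theta'(a)=0$, then $h_1$ and $\theta'$ both vanish at $a$ and are therefore proportional (note $\theta''(a)=-V_\theta|_{\theta(a)}\ne0$, since otherwise $\theta'\equiv0$), so any further zero of $\theta'$ is directly a zero of $h_1$ and lies in $(a,b)$ unless it equals $b$. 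Collecting the subcases shows that an interior conjugate point exists in every configuration \emph{except} the borderline one in which the only zeros of $\theta'$ are $a$ and $b$.

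This borderline configuration is exactly the step I expect to be the main obstacle: when $\theta'(a)=\theta'(b)=0$ and $\theta'\ne0$ on $(a,b)$, one has $h_1\propto\theta'$, so $b$ is the \emph{first} conjugate point and $h_1\in\mathcal D^D([a,b])$ realizes $\delta^2\mathscr E_\theta[h_1]=\braket{h_1|\mathcal S h_1}=0$. The second variation is then only positive semidefinite, so this case is genuinely delicate and must be addressed separately — either by excluding it through the hypothesis that $b$ is not conjugate to $a$ (as in Proposition~\ref{prop-Index}), or by passing to higher-order terms. I would accordingly isolate this endpoint coincidence explicitly and dispatch all remaining $I[\gamma]\ge2$ configurations, where a conjugate point sits strictly inside $(a,b)$, via the Sturm-separation argument above; these give $\textrm{Index}_{[a,b]^D}[\theta]>0$ and hence non-minimality.
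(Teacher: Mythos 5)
Your proposal is correct and follows essentially the same route as the paper: both proofs rest on the observation that $\mathcal S\theta'=0$, on Proposition~\ref{prop-DirConj} identifying conjugate points with the zeros of $h_1$, on the Sturm separation theorem comparing the zeros of $h_1$ and $\theta'$, and on Proposition~\ref{prop-DirConjEqMin} to convert the presence or absence of conjugate points into (non-)minimality; your $I[\gamma]=0$ argument is the paper's, up to running the contradiction through an assumed conjugate point $\sigma_c$ rather than through the smallest root of $h_1$.

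The one genuine difference is to your credit. In its linearly dependent case ($\theta'(a)=0$, so $h_1\propto\theta'$) the paper takes a second zero of $\theta'$ in $(a,b]$, declares it conjugate to $a$, and invokes Proposition~\ref{prop-DirConjEqMin} --- silently passing over the fact that the first statement of that proposition requires a conjugate point in the \emph{open} interval $(a,b)$, whereas the zero so produced may sit exactly at $b$. You isolate precisely this configuration ($\theta'$ vanishing at $a$ and $b$ and nowhere in between, so $I[\gamma]=2$): there $h_1\in\mathcal D^D([a,b])$ with $\braket{h_1|\mathcal S h_1}=0$, the second variation is only positive semidefinite with kernel spanned by $\theta'$, and strict non-minimality cannot follow from the index machinery alone; one must either exclude the case (``$b$ not conjugate to $a$'', as in Proposition~\ref{prop-Index}) or examine higher-order variations --- which can themselves vanish, e.g.\ for $V=-M\cos\theta$ the third variation evaluated on the kernel element vanishes by parity. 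So your proof establishes everything the paper's proof actually establishes and is more scrupulous about the endpoint-conjugate subcase, which the theorem's hypotheses, as written, do not exclude. One cosmetic slip: linear independence of $h_1$ and $\theta'$ is equivalent to $\theta'(a)\neq0$, not to your chosen zeros $p,q$ being interior; but since your second subcase handles $\theta'(a)=0$ in full, nothing is lost.
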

\begin{proof}

This result is a consequence of the Sturm separation theorem  which states that (\textit{e.g.}~\cite[Theorem 5.41]{kepe10}): ``If $x$ and $y$ are linearly independent solutions on an interval $I$ of the second order self-adjoint differential equation $L x=0$, then their zeros separate each others in $I$. By this we mean that $x$ and $y$  have no common zeros and between any two consecutive zeros of one of these solutions, there is exactly one zero of the other solution.'' 

In the previous section we defined $h_1$ as the unique solution of the initial value problem\re{IVPDM}. In particular, Sturm-Liouville operators are self-adjoint, therefore, $h_1$ solves the self-adjoint differential equation $\mathcal Sh_1=0$.  We also know that since $\theta$ is stationary, the function $\theta'$ is such that $\mathcal S \theta'=0$. Therefore, we have two solutions to the self-adjoint equation $\mathcal S h=0$.

Two cases must be distinguished. First, if $\theta'$ and $h_1$ are linearly dependent, they have identical roots and $I[\gamma]=0$ never occurs since $h_1(a)=0$ implies $\theta'(a)=0$ so that $\gamma(a)\in\Gamma_h$ and therefore $I[\gamma]\geq1$. If $I[\gamma]\geq2$, then $\theta'$ vanishes in $(a,b]$ and so does $h_1$. The point at which this happens is conjugate to $a$ by application of Proposition~\ref{prop-DirConj}. Then, Proposition~\ref{prop-DirConjEqMin} implies that $\theta'$ is not minimal. 

Second, consider the case where $\theta'$ and $h_1$ are linearly independent. If $I[\gamma]\geq2$, then $\theta'$ has at least two consecutive zeros in $[a,b]$ and the Sturm separation theorem implies that $h_1$ must vanish exactly once between these two zeros. The point at which $h_1$ vanishes is conjugate to $a$ by application of Proposition~\ref{prop-DirConj} and the solution $\theta$ is therefore not minimal (cf. Proposition~\ref{prop-DirConjEqMin}). Finally if $I[\gamma]=0$, then $h_1$ cannot vanish on $(a,b]$. By contradiction, if $h_1$ vanishes on $(a,b]$, we define $c$ to be the smallest root of $h_1$ on $[a,b]$. 
 Then $a$ and $c$ are consecutive zeros of $h_1$ and the Sturm separation theorem implies that $\theta'$ must vanish on $(a,c)$, a contradiction to $I[\gamma]=0$. So, if $I[\gamma]=0$, $h_1$ does not vanish on $(a,b]$, there are no conjugate points to $a$ (due to Proposition~\ref{prop-DirConj}) and $\theta$ is minimal as a consequence of Proposition~\ref{prop-DirConjEqMin}.\qed
\end{proof}

Next, we consider the case $I[\gamma]=1$.  We first define the length of an \textit{arc}, that is an oriented segment of  a trajectory  in  phase plane.
Consider a trajectory $\gamma: s\in A\subset \mathbb R\to (\theta(s),\theta'(s))$ in the phase plane of Eq.\re{thetapppot1} where $A$ is a closed interval in $\mathbb R$.  
We assume  that $I[\gamma]=1$, \textit{i.e.} there exists a unique $c\in A$ such that $\theta'(c)=0$ at which $\theta_c=\theta(c)$. 

Let $E$ be the pseudo-energy of $\gamma$. The value of $\theta_c$ depends on $E$ through $E=V(\theta_c)$. Choose two independent constants $T_0,P\in\mathbb R$ with the following properties: \textit{(i)} $\textrm{Sign}[\theta_c-T_0] P>0$; \textit{(ii)}    $\exists\ s_1\in A$ such that $T_0=\theta(s_1)$; \textit{(iii) } $\exists\ s_2\in A$ such that $P=\theta'(s_2)$. \\
Let $\eta$ be the arc  of $\gamma$ connecting the points $\big(T_0, P_0(T_0,E,P)\big)$ and $\big(T(E,P),P\big)$ where $P_0(T_0,E,P)=\textrm{Sign}[P] \sqrt{2 (E-V(T_0))}$ and where $T(E,P)$ is the root of $V(T)= E-\frac{P^2}{2}$,
that is closest to $\theta_c$ while respecting $\textrm{Sign}[T-T_0] P>0$.  we note that the arc $\eta=\eta(T_0,E,P)$ is fully specified through this construction by the three real numbers $(T_0,E,P)$ (see Fig.~\ref{fig-3}).
\begin{figure}
\centering\includegraphics[width=0.7\linewidth]{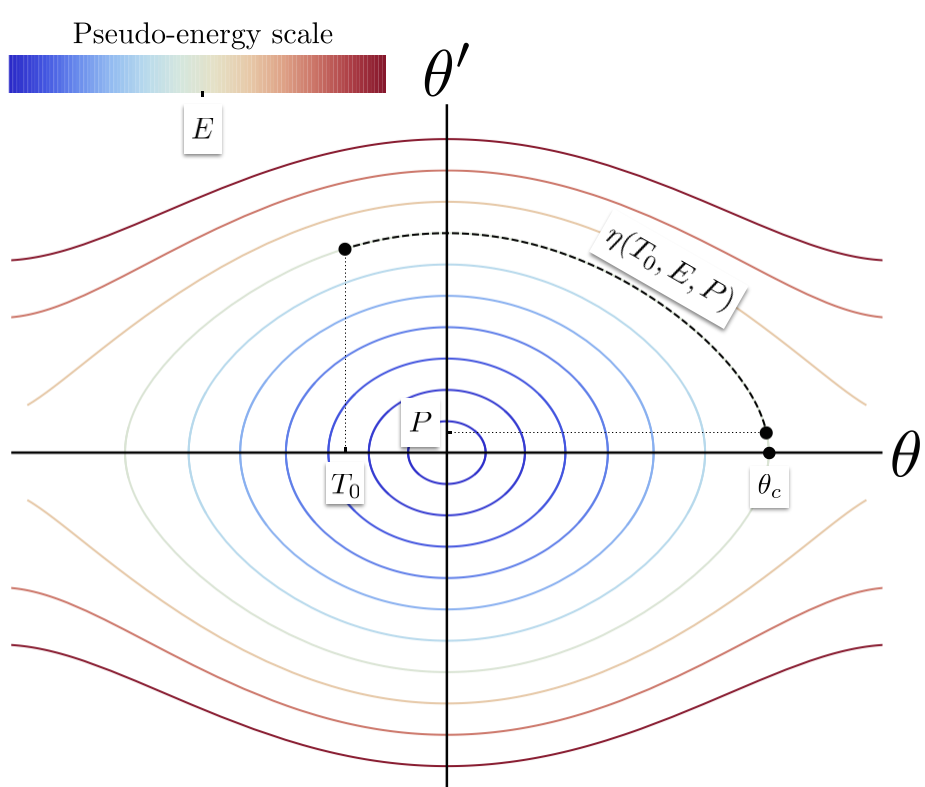}
\caption{In the phase plane an arc $\eta$ (dashed black) such that $I[\eta]=0$ is completely specified by prescribing its pseudo-energy $E$, its initial abscissa $T_0$, and its final ordinate $P$.} \label{fig-3}
\end{figure}

\begin{definition}
The \textit{length} $L(T_0,E,P)$ of the arc $\eta(T_0,E,P)$ is 
\be{def-Length}
L(T_0,E,P) = \frac {\textrm{Sign}[P]} {\sqrt{2}} \int_{T_0}^{T(E,P)} \frac{\textrm{d}\theta}{\sqrt{E- V(\theta)}}.
\ee
\end{definition}
The length $L$ is the size of the domain required for a solution of  energy $E$ to go from $T_0$ to $T$ when $P>0$ (or from $T$ to $T_0$ if $P<0$) without changing direction.  The variation of the length with respect to the pseudo-energy is of particular importance for the rest of this paper. It is given explictly by
\be{LE}
\pd{L}{E}(T_0,E,P)= \frac {1} {P~V_{\theta}(T(E,P)) } - \frac{\textrm{Sign}[P]}{2\sqrt{2}} \int_{T_0}^{T(E,P)} \frac{\textrm{d}\theta}{\left (E - V(\theta)\right)^{3/2}}.
\ee

The length $L(T_0,E,P)$ is only defined for arcs that have no intersection with the $\theta$-axis, but it can be used to define the length of an arc with one intersection. The length of a trajectory $\gamma$ with $I[\gamma]=1$ of pseudo-energy $E$ connecting the points $(T_1, P_1)$ to $\left(T_2,  P_2\right)$ is
\be{}
L[\gamma] = \lim_{P\to 0^{\textrm{Sign}[P_1]}} L(T_1,E,P) + \lim_{P\to 0^{\textrm{Sign}[P_2]}} L(T_2,E,P).
\ee
We can now state the general result for trajectories with a single intersection.

\begin{theorem} \label{prop-I1}
Let $\theta$ be a stationary function of $\mathscr E$ and let $\gamma$ be  its associated phase plane trajectory. 

If $I[\gamma]=1$ and $\pd{L}{E} >0$, then $\theta$ is locally minimal for the functional $\mathcal E$ on $\mathcal D^D([a,b])$.

If $I[\gamma]=1$ and $\pd{L}{E} < 0$, then $\theta$ is not locally minimal for the functional $\mathcal E$ on $\mathcal D^D([a,b])$.
\end{theorem}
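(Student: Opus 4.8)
The plan is to reduce the statement, through Proposition~\ref{prop-DirConjEqMin}, to the question of whether a conjugate point to $a$ lies in $(a,b]$, and then to realise the relevant Jacobi field as an energy‑derivative of the flight time, so that its zeros are controlled by the sign of $\pd{L}{E}$. By Proposition~\ref{prop-DirConjEqMin}, $\theta$ is locally minimal exactly when the solution $h_1$ of $\mathcal S h_1=0$ with $h_1(a)=0,\ h_1'(a)=1$ has no zero in $(a,b]$ (Proposition~\ref{prop-DirConj}). Since $I[\gamma]=1$, the Jacobi field $\theta'$ (recall $\mathcal S\theta'=0$) vanishes exactly once, at the turning point $c$ with $\theta(c)=\theta_c$; I treat the generic case $c\in(a,b)$, so that $\theta'(a)\neq0$, and (without loss of generality $\theta'(a)>0$) the trajectory runs monotonically upward to $\theta_c$ and then back down.

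The key step is to identify $h_1$ with an energy‑variation field. Let $\Theta(\cdot\,;E)$ be the solution of \re{thetapppot1} with $\Theta(a;E)=T_a$ and pseudo‑energy $E$ (cf.~\re{defE}), so that $\Theta(\cdot\,;E_0)=\theta$ for the energy $E_0$ of $\gamma$. Differentiating in $E$ shows that $g:=\left.\pd{\Theta}{E}\right|_{E_0}$ solves $\mathcal S g=0$ with $g(a)=0$ and $g'(a)=1/\theta'(a)\neq0$; hence $g$ is a nonzero multiple of $h_1$, and conjugate points are precisely the zeros of $g$ in $(a,b]$. Next I would invert the flight‑time relation: writing $\mathcal T(\vartheta;E)$ for the instant at which the trajectory of energy $E$ reaches the abscissa $\vartheta$ (an expression of the type \re{def-Length}, first up to the turning point and then back down), the identity $\Theta(\mathcal T(\vartheta;E);E)=\vartheta$ differentiated in $E$ gives, off the turning point, $g(s)=-\theta'(s)\,\pd{\mathcal T}{E}(\vartheta(s);E_0)$. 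In particular $g>0$ on the ascending arc up to $c$ (there $\theta'>0$ and $\pd{\mathcal T}{E}<0$), while on the descending arc $\theta'<0$ so $\mathrm{Sign}\,g=\mathrm{Sign}\,\pd{\mathcal T}{E}$; and at the right endpoint $g(b)=|\theta'(b)|\,\pd{L}{E}$, because $\mathcal T(T_b;E)$ is exactly the total flight time $L[\gamma]$ whose $E$‑derivative is the quantity computed in \re{LE}. Thus $\mathrm{Sign}\,g(b)=\mathrm{Sign}\,\pd{L}{E}$.

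To close both directions I would exploit the monotonicity
\be{planmono}
\pd{}{\vartheta}\!\left(\pd{\mathcal T}{E}\right)=\pd{}{E}\!\left(\pd{\mathcal T}{\vartheta}\right)=\frac{1}{2\sqrt2}\,\big(E-V(\vartheta)\big)^{-3/2}>0,
\ee
in which the dependence of $\theta_c$ on $E$ drops out because $\pd{\mathcal T}{\vartheta}$ sees only the explicit lower limit of the integral. Consequently, along the descending arc $\pd{\mathcal T}{E}$ decreases strictly as $\vartheta$ runs from $\theta_c$ down to $T_b$, attaining the value $\pd{L}{E}$ at $b$. If $\pd{L}{E}>0$, then $\pd{\mathcal T}{E}>0$ on the whole descending arc, so $g>0$ on all of $(a,b]$, there is no conjugate point, and $\theta$ is minimal. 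If $\pd{L}{E}<0$, then $g>0$ near $a$ while $g(b)<0$, so the intermediate value theorem produces a zero of $g$ in $(a,b)$, i.e.\ a conjugate point, and $\theta$ is not minimal.

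The main obstacle is the turning point $c$, where the $\vartheta$‑parametrisation and the integrals \refrto{def-Length}{LE} are singular although the Jacobi field $g=\pd{\Theta}{E}$ is perfectly regular. I would therefore justify carefully that $g$ extends smoothly across $c$ (standard smooth dependence of ODE solutions on $E$, the singularity being only that of the phase‑plane coordinate), that $g(c)>0$ (otherwise $c$ would be conjugate to $a$, forcing by Sturm separation a zero of $\theta'$ strictly inside $(a,c)$, contradicting $I[\gamma]=1$), and that the one‑sided limits of $-\theta'\,\pd{\mathcal T}{E}$ from the two arcs agree with $g(c)$. The remaining bookkeeping, including the degenerate endpoint cases $c\in\{a,b\}$ and the matching of $g(b)$ with $\pd{L}{E}$ through \re{LE}, is routine once this regularity at $c$ is established.
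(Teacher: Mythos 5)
Your proposal is correct, and it takes a genuinely different route from the paper's. The paper never introduces the energy family $\Theta(\cdot\,;E)$: it instead builds a second solution of $\mathcal S\tau=0$ by reduction of order, $\mu(s;d)=\theta'(s)\int_d^s\big(\theta'(t)\big)^{-2}\mathrm{d}t$, identifies $h_1=\theta'(a)\,\mu(\cdot\,;a)$ on $[a,c)$, and detects a root of $h_1$ in $(c,b)$ by a matching argument \emph{at the turning point}: $\theta'(a)\mu'(c;d)$ is continuous and strictly monotone in the gluing parameter $d$ (its property (P2)), tends to $-\infty$ as $d\to c^{+}$ (P4), so an intermediate value argument in $d$ produces a $d^{\star}\in(c,b)$ at which the two pieces glue into a $C^1$ solution of the IVP\re{IVPDM} vanishing at $d^{\star}$; the link to $\pd{L}{E}$ is made through its identity (P3), which is exactly your identity $g=-\theta'\,\pd{\mathcal T}{E}$ in disguise --- your Jacobi field $g=\pd{\Theta}{E}$ coincides (up to the factor $\theta'(a)$) with the paper's $\mu(\cdot\,;a)$, and the integral in\re{LE} is precisely your mixed-partial $\pd{}{\vartheta}\big(\pd{\mathcal T}{E}\big)=\frac{1}{2\sqrt2}(E-V(\vartheta))^{-3/2}$. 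What your route buys: the instability direction becomes an intermediate value theorem in $s$ applied to one globally defined field $g$, rather than an IVT in the parameter $d$, and the stability direction follows from the pointwise bound $\pd{\mathcal T}{E}>\pd{L}{E}>0$ along the descending arc; the turning-point singularity is absorbed into smooth dependence of ODE solutions on parameters plus Clairaut's theorem (via the implicit function theorem, since $\theta'\neq0$ off $c$), so you never need the two-parameter family $\mu(s;d)$ or the limits (P1)/(P4). What the paper's route buys: it constructs $h_1$ explicitly on $(c,b]$ via\re{h1R0}, thereby localising the conjugate point, and its computations stay entirely inside elementary phase-plane formulas. One caveat applies symmetrically: you restrict to the generic case $c\in(a,b)$, but the paper does so implicitly as well (its formulas require $\theta'(a)\neq0$ and $d\in(c,b)$), so deferring $c\in\{a,b\}$ matches the paper's level of rigour; your remaining promissory notes --- regularity of $g$ across $c$, the Sturm-separation argument giving $g(c)>0$, and the identification $\pd{\mathcal T}{E}(T_b;E_0)=\pd{L}{E}$ as the $E$-derivative of the total flight time with endpoint abscissas fixed --- all check out against the paper's (P1)--(P4) and its concluding equivalence $\pd{L}{E}(\theta(a),E,0)+\pd{L}{E}(\theta(b),E,0)<0$.
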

\begin{proof}
 $I[\gamma]=1$ implies that there is a unique $c\in[a,b]$ such that $\theta'(c)=0$; let $\theta_c=\theta(c)$. 
Define the function 
\be{defmu}
\mu (s;d)= \theta'(s) ~\int_d^s\frac{\textrm{d}t}{\big(\theta'(t)\big)^2},\qquad d\in[a,b]\setminus\{c\}.
\ee
The domain of $\mu$ with respect to $s$ is defined as $[a,c)$ if $d<c$ and $(c,b]$ if $d>c$. We first  establish the following properties of the function $\mu$: 
\begin{enumerate} 
\item[(P1)] 
\be{}
\lim_{s\to c} \mu(s;d) = \frac{1}{\left .\sd{V}{\theta}\right |_{\theta_c} }.
\ee
This result is obtained by direct computation
\ben{}
\lim_{s\to c} \mu(s;d) & =& \lim_{s\to c} \frac{\theta'(s)}{\frac{1}{\int_d^s\frac{\textrm{d}t}{\theta'^2(t)}}} = \lim_{s\to c} \frac{\theta''(s)}{-\frac{1}{\left(\int_d^s\frac{\textrm{d}t}{\theta'^2(t)} \right)^2} \frac{1}{\theta'^2(s)}}  = \left . \sd{V}{\theta}\right|_{\theta_c} \lim_{s\to c} (\mu(s;d))^2.\nonumber
\een
 As a consequence, this limit does not depend on  $d$. 
 \item[(P2)] For any $s$ such that  $c \notin [\textrm{min}(s,d),\textrm{max}(s,d)]$, the derivative $\mu'(s;d)$ -- where $'$ denotes derivation by $s$ -- depends continuously and monotonically on $d$: it is strictly monotonically increasing with $d$ if $\left . \sd{V}{\theta}\right|_{\theta(s)}>0$ and strictly monotonically decreasing if $\left . \sd{V}{\theta}\right|_{\theta(s)}<0$. \\
This property follows from the explicit computation of $\mu'(s;d)$:
 \ben{proofP2}
\hspace{-.4cm}\mu'(s;d) = \frac{1}{\theta'(s)} + \theta''(s) \int_d^s \frac{\textrm{d}t}{\big(\theta'(t)\big)^2} = \frac{1}{\theta'(s)} - \left . \sd{V}{\theta}\right|_{\theta(s)} \int_d^s \frac{\textrm{d}t}{\big(\theta'(t)\big)^2}.
\een
\item[(P3)] If the arc connecting $(\theta(s), \theta'(s))$ to $(\theta(d),\theta'(d))$ does not intersect the $\theta$-axis, then
\be{p3}
\mu'(s;d) =\textrm{Sign}[\theta'(s)] \textrm{ Sign}[\theta(s)-\theta(d)] \left . \sd{V}{\theta}\right|_{\theta(s)}~ \pd{L}{E}(\theta(d),E,\theta'(s)).
\ee
To obtain this result, we rewrite the last expression in\re{proofP2} as
\be{}
\mu'(s;d)=\frac{1}{\theta'(s)} - \left . \sd{V}{\theta}\right|_{\theta(s)} \textrm{ Sign}[\theta'(s)] \int_{\theta(d)}^{\theta(s)} \frac{\textrm{d}\theta}{2\sqrt{2} \left (E-V(\theta)\right)^{3/2}},
\ee 
and use the definition\re{LE} to re-express the integral in terms of the derivative of the length with respect to the pseudo-energy.

\item[(P4)] \be{p4}
\lim_{s\to c^+ } \mu'(s,s)=  \textrm{Sign }[\theta'(b)]~ \infty.
\ee
\end{enumerate}
This result is obtained by taking the limit in\re{proofP2}. That is, we have \\ $\lim_{d\to s} \mu'(s;d) = \frac 1 {\theta'(s)}$. \\

Theorem 2 is based on Propositions~\ref{prop-DirConj} and~\ref{prop-DirConjEqMin}. We define $h_1$ to be the unique solution of the initial value problem $\mathcal S h_1=0$ 
with initial values $h_1(a)=0$ and $h_1'(a)=1$. Then, if $h_1$ has no root in $(a,b]$, then $\theta$ is minimal and if $h_1$ has a root in $(a,b)$, then $\theta$ is not minimal.  To prove the theorem, we prove separately the two statements: (A) $h_1$ has no root in $(a,c]$; (B) $h_1$ has a root in $(c,b)$ if and only if 
\be{limineq}
\theta'(a) \lim_{s\to c^-} \mu'(s;a) < \theta'(a) \lim_{s\to c^+} \mu'(s;b). 
\ee
This last condition\re{limineq} is then shown to be equivalent to 
\be{dLdEineq}
\pd{L}{E} < 0.
\ee

\noindent (A) First, we note that $\mathcal S\mu(s;a)=0$. This result follows from direct substitution and using the fact  that $\theta$ solves\re{thetapppot}. Noting that $\mu(a;a) = 0$ and $\mu'(a;a)=1/{\theta'(a)}$, we have
\be{h1L}
\forall s\in [a,c): h_1(s)= \theta'(a) \mu(s;a).
\ee
We observe that $\mu$ does not vanish on $(a,c]$.  Indeed, the function $\theta'$ does not vanish on $(a,c)$ since $c$ is its only root on $[a,b]$ and the integrand of the second factor in\re{defmu} is strictly positive. Finally, (P1) implies that $\mu$ does not vanish at $c$.\\

\noindent (B) For all  $d\in[a,b]\setminus \{c\}$, $\mu(s;d)$ and $\theta'$ are linearly independent solutions of $\mathcal S \tau=0$ since $\theta'(d)\neq0$ while $\mu(d;d)=0$. First we prove that if $h_1$ has a root in $(c,b)$, then\re{limineq} holds. Assume that there exists $d\in (c,b): h_1(d)=0$, then there exist  constants $C_1$ and $C_2$ such that $h_1(s) = C_1 \, \mu(s;d) + C_2\,  \theta'(s)$.
In this case, $h_1(d) = C_2\, \theta'(d)= 0 \Rightarrow C_2=0$. Next,\re{h1L} and (P1) imply that $C_1= \theta'(a)$. Hence, we have
\be{h1R0}
\exists d\in (c,b]:\, h_1(d)=0\qquad  \Rightarrow \qquad \forall s \in(c,b]: \, h_1(s)= \theta'(a)\,  \mu(s;d). 
\ee
Since $h_1 \in C^2([a,b])$, we have $\lim_{s\to c^-} h_1(s)= \lim_{s\to c^+} h_1(s) \Rightarrow \theta'(a) \mu'(c;a) = \theta'(a) \mu'(c;d)$. Finally, $\textrm{Sign}[\theta'(a)]= \textrm{Sign}\left [ \left.\sd{V}{\theta}\right|_{\theta_c} \right]$ and (P2) imply that\\ $\theta'(a) \mu'(c;d)$ is a strictly monotonically increasing function of $d$. Since $b>d$, the inequality\re{limineq} holds.

 Next, we prove that if\re{limineq} holds, then $h_1$ has a root in $(c,b)$. Multiplying\re{p4} by $\theta'(a)$ leads to $\lim_{s\to c^+ } \theta'(a)  \mu'(s;s) = -\infty$. Since according to (P2), $\theta'(a) \mu'(c;d)$ is continuous in $d$, it satisfies the assumption of the intermediate value theorem. Therefore, $\exists d^\star\in (c,b): \mu'(c;d^\star) = \mu'(c;a)$. For that particular value, the function 
\be{}
h_1(s)= 
\left \{
\begin{split}
\theta'(a) \mu(s; a) \quad &\textrm{if } s\in[a,c], \\
\theta'(a) \mu(s; d^\star) \quad &\textrm{if } s\in(c,b] ,
\end{split}
\right .
\ee
is $C^1([a,b])$ by construction and solves the IVP\re{IVPDM}. Accordingly, $h_1(d^\star)=\theta'(a) \mu(d^\star; d^\star)=0$ has a root at $d^\star \in(c,b)$. \\

Finally, to show the equivalence between\re{limineq} and $\pd{L}{E} < 0$, we substitute\re{p3} in\re{limineq} to obtain
\ben{}
&&\theta'(a) \left . \sd{V}{\theta}\right |_{\theta_c} ~\textrm{Sign}[\theta(c)-\theta(a)] \textrm{ Sign}[\theta'(c^-)]~~ \pd{L}{E}(\theta(a),E,0) \nonumber \\
&&\qquad <
\theta'(a) \left . \sd{V}{\theta}\right |_{\theta_c} \textrm{ Sign}[\theta(c)-\theta(b)] \textrm{ Sign}[\theta'(c^+)]~~ \pd{L}{E}(\theta(b),E,0).  \label{buildineq}
\een
In\re{buildineq}, the factor $\theta'(a) \left . \sd{V}{\theta}\right |_{\theta_c}$ is positive. Furthermore, we have $\textrm{Sign}[\theta(c)-\theta(a)] \textrm{ Sign}[\theta'(c^-)]=1$ and $\textrm{Sign}[\theta(c)-\theta(a)] \textrm{ Sign}[\theta'(c^+)]=-1$. Hence, this inequality simplifies to
\be{}
\pd{L}{E}(\theta(a),E,0) +\pd{L}{E}(\theta(b),E,0) <0.
\ee
\qed
\end{proof}

\section{Neumann boundary conditions\label{sec-neu}}

In Section~\ref{sec-num} we related the positive definiteness  of the second variation to the existence of negative eigenvalues of the second order differential operator $\mathcal S$.  For Dirichlet boundary conditions, this relation was equivalent to finding the roots of the solution of the IVP  $\mathcal S h_1=0$ with $h_1(a) =0$ and $h_1'(a)=1$. In Section~\ref{sec-dir}, we exploited the fact that $\theta'$ is also a solution $\mathcal S h_1=0$  to obtain sufficient conditions  for both the existence of negative eigenvalues (given by $I[\gamma]\geq 2$) and for their non-existence (given by $I[\gamma]=0$). Finally, we noted that the number of roots of $\theta'$ can be obtained in the phase plane by counting the number of times the associated trajectory crosses the \textit{horizontal} $\theta$-axis. 

In the case of natural boundary conditions, Proposition~\ref{prop-NeuConj} showed that the existence of negative eigenvalues  depends on the roots of $h_2'$ where $h_2$ is the unique solution to  the IVP $\mathcal S h_2=0$ with $h_2(a)=1$ and $h_2'(a)=0$. We now show that   the roots of $\theta''$ can be used to obtain sufficient conditions for both stability and instability. Since the roots of $\theta''$ are also roots of $\left. \sd{V}{\theta}\right|_{\theta}$,  stability or instability can  be obtained by  counting the number of times the associated trajectory crosses  the \textit{vertical} boundaries corresponding to extrema of the potential $V$. 

More precisely, recall from Section~\ref{sec-state}
that $\Gamma_m$ and $\Gamma_M$ are  min- and max-boundaries: the vertical lines corresponding to min and max of $V$. Then,  the index $J$ of an arc $\eta$  is defined by\re{defJ} as the number of times $\eta$ has intersects with  $\Gamma_m$ minus the number of times it intersects with $\Gamma_M$. This index is also the number of times $\eta$ crosses minimal points of $V$ minus the number of times it crosses maximal points of $V$. Recalling the definition of the
functional space $\mathcal D^N([a,b])= \{\tau\in C^2([a,b])\setminus\{0\}: \tau'(a)=\tau'(b)=0\}$, we have 
\begin{theorem}
\label{theo-IndJ}
Let $\theta:[a,b]\to \mathbb R$ be stationary for the functional $\mathscr E$  such that $\left. \sd{V}{\theta}\right|_{\theta(a)}\neq 0\neq \left. \sd{V}{\theta}\right|_{\theta(b)}$ and $\nexists s\in[a,b]: \theta'(s)=0\textrm{ and }\left . \sd{V}{\theta}\right|_{\theta(s)}=0$. Let $\gamma:s\in[a,b]\to (\theta(s),\theta'(s))\in\mathbb R^2$ be the trajectory associated to $\theta$ in the phase plane. 
Then,

 if $J[\gamma]>0$,   $\theta$ is not locally minimal for the functional $\mathcal E$ on $\mathcal D^N([a,b])$;

if $J[\gamma]<0$,  $\theta$ is locally minimal for the functional $\mathcal E$ on $\mathcal D^N([a,b])$.
\end{theorem}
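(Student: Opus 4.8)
The plan is to compute the Morse index of the stationary solution and read off its sign directly from the geometric quantity $J[\gamma]$. By Proposition~\ref{prop-Index} it suffices to show that $\textrm{Index}_{[a,b]^N}[\theta]=0$ with $b$ not conjugate to $a$ when $J[\gamma]<0$, and that $\textrm{Index}_{[a,b]^N}[\theta]\geq 1$ when $J[\gamma]>0$. Following the numerical strategy of Section~\ref{sec-num}, I would track the eigenvalues of $\mathcal S$ on $\mathcal D^N([a,\sigma])$ as $\sigma$ runs from $a$ to $b$: the index starts at the number of negative inborn eigenvalues, which by Proposition~\ref{prop-inborn} equals $1$ if $V_{\theta\theta}|_{\theta(a)}>0$ and $0$ otherwise (since $f(a)=-V_{\theta\theta}|_{\theta(a)}$), and then changes by $\pm1$ at each conjugate point, which by Proposition~\ref{prop-NeuConj} are the zeros of $h_2'$ in $(a,b)$.

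Two ingredients are then needed. First, the \emph{direction} of each index change. I would establish the Neumann analogue of the Dauge--Helffer formula\re{DHD}: differentiating the normalised eigenpair of $-u''+f\,u=\lambda u$ with $u'(a)=u'(\sigma)=0$ with respect to $\sigma$ and integrating by parts yields $\sd{\lambda}{\sigma}=(f(\sigma)-\lambda)\,u(\sigma)^2$. At a conjugate point $\sigma_c$ one has $\lambda=0$, so this reduces to $f(\sigma_c)\,u(\sigma_c)^2$, whose sign is that of $f(\sigma_c)=-V_{\theta\theta}|_{\theta(\sigma_c)}$. Hence the index increases by $1$ where $V_{\theta\theta}|_{\theta(\sigma_c)}>0$ and decreases by $1$ where $V_{\theta\theta}|_{\theta(\sigma_c)}<0$, giving $\textrm{Index}_{[a,b]^N}[\theta]=N_{\textrm{inborn}}+\sum_{\sigma_c}\textrm{Sign}[V_{\theta\theta}|_{\theta(\sigma_c)}]$, the sum running over conjugate points in $(a,b)$.

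Second, I must relate this signed sum over zeros of $h_2'$ to $J[\gamma]=\sum_{s_\ast}\textrm{Sign}[V_{\theta\theta}|_{\theta(s_\ast)}]$, the analogous signed sum over the boundary crossings $s_\ast$, i.e.\ the zeros of $\theta''=-V_\theta|_\theta$. The key structural fact is that $h_2'$ and $\theta''$ solve the \emph{same} second-order linear equation: differentiating $\theta''+V_\theta|_\theta=0$ and $\mathcal S h_2=0$, and eliminating $\theta'$ and $h_2$ via $\theta'''=f\theta'$ and $h_2''=fh_2$, shows that $y=\theta''$ and $y=h_2'$ each satisfy $(y'/f)'=y$; equivalently they are independent solutions tied by the constant Wronskian $h_2\theta''-h_2'\theta'=-V_\theta|_{\theta(a)}\neq0$. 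By the Sturm separation theorem their zeros strictly interlace on any interval where $f\neq0$. Since a boundary crossing sits at an extremum of $V$ (where $V_{\theta\theta}\neq0$) whereas the inflection points of $V$ are the zeros of $f$, on each maximal interval of constant sign of $f$ the crossings and the conjugate points interlace and carry the \emph{same} weight $-\textrm{Sign}[f]=\textrm{Sign}[V_{\theta\theta}]$; interval by interval the two signed counts therefore agree up to a boundary term of size at most one.

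The hard part is precisely this terminal bookkeeping: controlling the $\pm1$ discrepancies between the zeros of $h_2'$ and of $\theta''$ that appear at the inflection points of $V$ (where the comparison equation $(y'/f)'=y$ becomes singular, and its oscillatory versus non-oscillatory character flips with the sign of $f$) and at the two endpoints, and then verifying that these discrepancies combine with the inborn contribution to give exactly the claimed sign of the total index. Here the hypotheses do the work: $V_\theta|_{\theta(a)}\neq0\neq V_\theta|_{\theta(b)}$ puts both endpoints in generic position, with $h_2'(a)=0$ fixed by the normalisation while $\theta''(a)\neq0$ anchors the interlacing, and the absence of a common zero of $\theta'$ and $V_\theta$ makes every crossing of a vertical boundary transversal so that $J[\gamma]$ is well defined. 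Assembling these pieces, a rotation-number argument on $\mathbb R\cup\{\infty\}$ for the ratio $h_2'/\theta''$ — whose poles are the boundary crossings, whose zeros are the conjugate points, and which satisfies $(h_2'/\theta'')'=-f\,V_\theta|_{\theta(a)}/(\theta'')^2$ of controlled sign — converts the interlacing into the inequalities $\textrm{Index}\geq1$ when $J[\gamma]>0$ and $\textrm{Index}=0$ with $b$ not conjugate when $J[\gamma]<0$, which by Proposition~\ref{prop-Index} completes the proof.
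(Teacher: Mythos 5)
Your structural observations are all correct: $\mathcal S\theta'=0$, $h_2''=fh_2$, hence both $y=\theta''$ and $y=h_2'$ satisfy the singular equation $(y'/f)'=y$ with constant Wronskian $h_2\theta''-h_2'\theta'\equiv -V_\theta|_{\theta(a)}\neq 0$, and your Neumann Dauge--Helffer formula together with the signed count over conjugate points is exactly the paper's Eq.~(\ref{DHN}) and Proposition~\ref{prop-ourIndex}. But the proof has a genuine gap, and it is the very step you label ``the hard part'': the conversion of the signed count over the zeros of $h_2'$ (conjugate points to $a$, an \emph{unknown} set) into the signed count over the zeros of $\theta''$ (the boundary crossings, which give $J[\gamma]$) is asserted, never carried out. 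Two concrete obstructions: (i) Proposition~\ref{prop-ourIndex} requires that $f$ and $h'$ do not vanish simultaneously; with base point $a$ the function $h_2'$ is not geometrically identifiable, and the standing hypothesis (that $V_\theta$ and $V_{\theta\theta}$ never vanish together) only controls $\theta''$, so a degenerate crossing $h_2'(\sigma_c)=f(\sigma_c)=0$, where the eigenvalue velocity $(f(\sigma_c)-\lambda)u(\sigma_c)^2$ vanishes, is not excluded. (ii) Sturm separation for $(y'/f)'=y$ holds only on intervals where $p=1/f$ keeps one sign; across the zeros of $f$ the interlacing can break, and since on $f>0$ intervals the equation is disconjugate, the per-interval discrepancies of size one can \emph{accumulate} over the several sign changes of $f$ along the trajectory -- your claim that everything combines into ``exactly the claimed sign of the total index'' would require actually computing the winding of $h_2'/\theta''$, including the endpoint contributions and the verification that $b$ is not conjugate to $a$ (which Proposition~\ref{prop-Index} needs for the stability half), none of which is done.

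It is worth seeing how the paper sidesteps precisely this difficulty: rather than basing the conjugate-point analysis at $a$, it splits $[a,b]$ at an interior crossing $c$ with $\gamma(c)\in\Gamma_M\cup\Gamma_m$. There $\theta''(c)=-V_\theta|_{\theta(c)}=0$, so $h_2=\theta'/\theta'(c)$ \emph{is} the Neumann IVP solution based at $c$, and the conjugate points to $c$ are exactly the boundary crossings (Lemma~\ref{lem-IndexSub}) -- the identification you are struggling to establish from $a$ becomes an identity from $c$. Combined with Lemma~\ref{lem-relIJ} (which forces $J[\gamma]=-1$ in the stable case and pins $J[\gamma_1]=J[\gamma_2]=0$, resp.\ indices $\geq 1$ on both halves in the unstable case) and the two gluing lemmas (Lemma~\ref{lem-unst} for concatenating negative directions, Lemma~\ref{lem-st} for recovering strong positivity on $[a,b]$ from the two halves), this avoids ever computing $\textrm{Index}_{[a,b]^N}$ directly; the hypothesis $V_\theta|_{\theta(a)}\neq 0\neq V_\theta|_{\theta(b)}$ then immediately shows $c$ is conjugate to neither endpoint. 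Unless you can complete the rotation-number computation for $h_2'/\theta''$ -- including the degenerate-crossing issue in (i) -- the correct repair is to adopt this change of base point, after which your Wronskian identity becomes superfluous.
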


We first  establish several intermediary results. Unlike for  Dirichlet boundary conditions, the eigenvalues of a Sturm-Liouville problem with Neumann boundary conditions do not decrease monotonically with the size of the domain.  Dauge-Helffer's theorem~\cite{dahe93} states that for the Sturm-Liouville problem $-(p y')' + q\,  y = \lambda \, y$ with boundary conditions $y'(a)=0$ and $y'(\sigma)=0$ where $\sigma>a$ and the function $p\geq k$ with $k$ a strictly positive number, we have  
\be{DHN}
\sd{\lambda}{\sigma}= (u(\sigma))^2~ (q(\sigma) - \lambda(\sigma)),
\ee 
where $u$ is a $L^2$ normed eigenfunction associated with $\lambda$ on $\mathcal D^N([a,\sigma])$. Therefore, for $\sigma$ such that $\lambda(\sigma)=0$, the sign of $\sd{\lambda}{\sigma}$ is  given by the sign of $q(\sigma)$.  This result is important to establish the following proposition:
\begin{proposition}
\label{prop-ourIndex}
Let $f(s)$ be a smooth function on $[a,b]\subset \mathbb R$ and $\mathcal S$ a Sturm-Liouville operator defined by $\mathcal S= -\sdd{}{s}+f$. Let $c$ and $d\in\mathbb R$ be such that $a\leq c< d\leq b$ and $f(c)\neq0$, and define $h$ as the unique solution to the initial value problem $\mathcal S h=0$, with $h(c)=1$, $h'(c)=0$. If $f$ and $h'$ do not vanish simultaneously on $[c,d]$, the number of negative eigenvalues of $\mathcal S$ on $\mathcal D^N([c,d])$  is
\be{ourIndex}
\textrm{Index}_{[c,d]^N} [\mathcal S]=\frac 1 2 \left (1- \textrm{Sign}[f(c)]\right)-  \sum_{\{\sigma_c\in(a,b]: h'(\sigma_c)=0\} } \textrm{Sign}\left(f(\sigma_c)\right).
\ee
\end{proposition}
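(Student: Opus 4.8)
The plan is to run the Manning counting strategy of Section~\ref{sec-num} verbatim, now on the interval $[c,d]$ with $\sigma$ growing from $c^+$ up to $d$, and to supply the two ingredients it needs: the starting count of negative eigenvalues and the signed increment contributed at each conjugate point. The two terms of \re{ourIndex} will then be exactly these two contributions.

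First I would pin down the starting count. As $\sigma\to c^+$, Proposition~\ref{prop-inborn} (applied with the left endpoint $a$ replaced by $c$) gives exactly one negative inborn eigenvalue of $\mathcal S$ on $\mathcal D^N([c,\sigma])$ when $f(c)<0$ and none when $f(c)>0$; since $f(c)\neq0$ by hypothesis, this count equals $\tfrac12\big(1-\textrm{Sign}[f(c)]\big)$, which is the first term of \re{ourIndex}.

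Next I would identify the conjugate points and the increment each one contributes. By Proposition~\ref{prop-NeuConj} (again with $a\to c$), the conjugate points to $c$ are precisely the roots of $h'$, so the values $\sigma_c\in(c,d]$ at which an eigenvalue of $\mathcal S$ on $\mathcal D^N([c,\sigma])$ vanishes are exactly those with $h'(\sigma_c)=0$. At such a $\sigma_c$ the function $h$ satisfies both Neumann conditions $h'(c)=h'(\sigma_c)=0$ together with $\mathcal Sh=0$, so $h$ restricted to $[c,\sigma_c]$ is the eigenfunction carrying the vanishing eigenvalue, and its $L^2$-normalisation is the function $u$ appearing in the Dauge--Helffer identity \re{DHN}. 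Moreover $h(\sigma_c)$ and $h'(\sigma_c)$ cannot vanish together, since uniqueness for the initial value problem would then force $h\equiv0$, contradicting $h(c)=1$; hence $u(\sigma_c)\neq0$. Evaluating \re{DHN} with $q=f$ and $\lambda(\sigma_c)=0$ yields $\sd{\lambda}{\sigma}\big|_{\sigma_c}=\big(u(\sigma_c)\big)^2 f(\sigma_c)$, whose sign equals $\textrm{Sign}\big(f(\sigma_c)\big)$. Therefore, as $\sigma$ crosses $\sigma_c$, a positive eigenvalue turns negative (Index $+1$) if $f(\sigma_c)<0$ and a negative eigenvalue turns positive (Index $-1$) if $f(\sigma_c)>0$; in both cases the increment is $-\textrm{Sign}\big(f(\sigma_c)\big)$.

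Summing the inborn count and the increments over all conjugate points in $(c,d]$ then reproduces \re{ourIndex}. The hypothesis that $f$ and $h'$ never vanish simultaneously on $[c,d]$ is exactly what makes the argument go through: at each conjugate point $h'(\sigma_c)=0$, so the hypothesis forces $f(\sigma_c)\neq0$, which both makes $\textrm{Sign}\big(f(\sigma_c)\big)$ well defined and makes each crossing transversal, so that the eigenvalue genuinely changes sign rather than merely touching zero. I expect the main obstacle to be the transversality bookkeeping rather than any hard estimate: one must verify that exactly one eigenvalue crosses at each $\sigma_c$ (this is where Sturm--Liouville simplicity and the conjugate-point correspondence recalled after Definition~\ref{def-conj} enter), that distinct conjugate points are counted singly, and that the endpoint convention in the sum (the half-open $(c,d]$, and in particular how a conjugate point sitting exactly at $d$ is counted when the corresponding eigenvalue is exactly zero there) is handled consistently.
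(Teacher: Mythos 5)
Your proposal is correct and takes essentially the same route as the paper's own proof: the inborn count from Proposition~\ref{prop-inborn} applied at $c$, the identification of conjugate points as the roots of $h'$ via Proposition~\ref{prop-NeuConj}, and the sign of each crossing read off the Dauge--Helffer identity\re{DHN} with $p=1$, $q=f$, including the same IVP-uniqueness argument that the eigenfunction cannot vanish at $\sigma_c$ and the same use of the hypothesis to guarantee $f(\sigma_c)\neq0$. Your closing concerns about simplicity of eigenvalues, single crossings, and branches not appearing or merging are exactly the points the paper defers to Appendix~\ref{app-itworks}, so no genuinely new ingredient is needed.
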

\begin{proof}
Following Definitions~\ref{def-conj} and~\ref{def-index}, the number of negative inborn eigenvalues at $c$ is given by Proposition~\ref{prop-inborn}: there is one negative inborn eigenvalue if $f(c)<0$ and none if $f(c)>0$.  Then, define the set $\Sigma_c= \{\sigma_c\in(c,d]: h_1'(\sigma_c)=0\}$ of points $\sigma_c$ conjugated to $c$. Then, the problem $\mathcal S \tau=\lambda \tau$ on $\mathcal D^N([c,\sigma])$ with $\sigma \in [c,d]$ has a null eigenvalue if, and only if, $\sigma\in\Sigma_c$ (cf. Proposition~\ref{prop-NeuConj}). At each $\sigma_c$, an eigenvalue changes sign.   Dauge-Hellfer's theorem\re{DHN}  with $p=1$ and $q=f$ implies that a positive eigenvalue becomes negative when $f(\sigma_c)<0$. Indeed, by contradiction, the eigenfunction $u_{\sigma_c}$  cannot vanish at $\sigma_c$ otherwise it would be the unique and trivial solution of the initial value problem $\mathcal S u_{\sigma_c}=0$, $u_{\sigma_c}(\sigma_c)=u'_{\sigma_c}(\sigma_c)=0$. Similarly, a negative eigenvalue becomes positive if $f(\sigma_c)>0$. Note that $f$ cannot vanish at $\sigma_c$ since we assumed that $h'$ and $f$ do not vanish simultaneously.  We conclude that the number of negative eigenvalues is given by\re{ourIndex}. \qed
\end{proof}

For the operator $\mathcal S$ associated with the second variation of $\mathscr E$ at $\theta$, we have $f(s)=-{V_{\theta\theta}}\big|_{\theta(s)}$. Therefore, the direction of the  change of sign of the eigenvalue crossing 0 at conjugate points $\sigma_c$ is  given by the convexity of $V$ at $\theta(\sigma_c)$. Specifically, conjugate points for which $V$ is convex ($V_{\theta\theta}>0$)  transform positive eigenvalues into negative eigenvalues. \textit{Vice versa}, conjugate points for which $V$ is concave ($V_{\theta\theta}<0$) transform negative eigenvalues into positive eigenvalues. 

The existence of negative eigenvalues can be obtained by considering a particular set  of conjugate points $c\in(a,b)$ such that $\left . \sd{V}{\theta}\right|_{\theta(c)}=0$.
\begin{lemma}
\label{lem-IndexSub}
Let  $\theta:[a,b]\to \mathbb R$ be a solution of\re{thetapppot} such that $\nexists s\in[a,b]: \theta'(s)=V_\theta(\theta(s)) =0$, and $\gamma$
its associated trajectory in phase plane. Take $c\in(a,b)$ such that $\gamma(c)\in \Gamma_m \cup \Gamma_M$, and 
define the arcs  $\gamma_1:s\in[a,c)\to \gamma(s)$ and $\gamma_2:s\in(c,b]\to\gamma(s)$ with their  closures $\bar \gamma_1:s\in[a,c]\to \gamma(s)$ and $\bar \gamma_2:s\in[c,b]\to\gamma(s)$. Then,
\begin{itemize}
\item[(A)]
all points $\sigma_c\in[a,b]$ such that $\theta(\sigma_c)$ is a stationary point of $V$ are conjugated to one another;

\item[(B)]
the Index of $\theta$ on $\mathcal D^N([c,b])$ is given by
\be{}
\textrm{Index}_{\mathcal D^N([c,b])}[\theta]= 
\left \{
\begin{split}
J[\gamma_2] &\quad \textrm{if } \gamma(c) \in \Gamma_M, \\
J[\bar \gamma_2] &\quad \textrm{if } \gamma(c) \in \Gamma_m;
\end{split}
\right . 
\ee
\item[(C)]
the Index of $\theta$ on $\mathcal D^N([a,c])$ is given by 
\be{}
\textrm{Index}_{\mathcal D^N([a,c])}[\theta]= 
\left \{
\begin{split}
J[\gamma_1] &\quad \textrm{if } \gamma(c) \in \Gamma_M, \\
J[\bar \gamma_1] &\quad \textrm{if } \gamma(c) \in \Gamma_m.
\end{split}
\right . 
\ee
\end{itemize}
\end{lemma}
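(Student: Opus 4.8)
The plan is to reduce all three parts to the counting formula of Proposition~\ref{prop-ourIndex}, the decisive observation being that $\theta'$ itself is the Neumann fundamental solution based at $c$. Differentiating $\theta''+V_\theta=0$ gives $\mathcal S\theta'=0$ (here $\theta\in C^3$ since $V\in C^2$, so $\theta'\in C^2$), and because $\gamma(c)\in\Gamma_m\cup\Gamma_M$ we have $V_\theta(\theta(c))=0$, hence $\theta''(c)=0$ while $\theta'(c)\neq 0$ (the two cannot vanish together by the standing hypothesis). Thus, setting $h=\theta'/\theta'(c)$, the function $h$ is the unique solution of $\mathcal Sh=0$ with $h(c)=1$, $h'(c)=0$, and its derivative $h'=\theta''/\theta'(c)$ vanishes at $\sigma$ exactly when $V_\theta(\theta(\sigma))=0$, i.e. exactly when $\gamma(\sigma)\in\Gamma_m\cup\Gamma_M$. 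At every such $\sigma$ one has $V_{\theta\theta}(\theta(\sigma))\neq 0$ (a strict extremum of $V$), so $f=-V_{\theta\theta}|_\theta$ and $h'$ never vanish simultaneously, which is precisely the non-degeneracy hypothesis of Proposition~\ref{prop-ourIndex}. Part (A) is then immediate: if $\theta(\sigma_1)$ and $\theta(\sigma_2)$ are stationary points of $V$ then $\theta''(\sigma_1)=\theta''(\sigma_2)=0$, so $\theta'/\theta'(\sigma_1)$ is the fundamental solution based at $\sigma_1$ and has a root of its derivative at $\sigma_2$; by Proposition~\ref{prop-NeuConj}, $\sigma_2$ is conjugate to $\sigma_1$.

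For Part (B) I would apply Proposition~\ref{prop-ourIndex} verbatim on $[c,b]$, whose left endpoint is $c$ with $f(c)\neq 0$. Writing $f=-V_{\theta\theta}|_\theta$, each conjugate point $\sigma_c\in(c,b]$ contributes $-\textrm{Sign}(f(\sigma_c))=\textrm{Sign}(V_{\theta\theta}(\theta(\sigma_c)))$, namely $+1$ when $\gamma(\sigma_c)\in\Gamma_m$ and $-1$ when $\gamma(\sigma_c)\in\Gamma_M$; summed over the domain $(c,b]$ of $\gamma_2$ this is exactly $J[\gamma_2]$. The inborn term $\frac{1}{2}(1-\textrm{Sign}[f(c)])$ equals $0$ when $\gamma(c)\in\Gamma_M$ (there $f(c)>0$) and $1$ when $\gamma(c)\in\Gamma_m$ (there $f(c)<0$). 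In the first case the formula returns $J[\gamma_2]$ directly; in the second it returns $1+J[\gamma_2]=J[\bar\gamma_2]$, the extra unit being precisely the $\Gamma_m$-crossing at $c$ that the closure $\bar\gamma_2$ counts. This reproduces (B).

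Part (C) is the delicate one, because Proposition~\ref{prop-ourIndex} bases its fundamental solution at the left endpoint, whereas on $[a,c]$ the natural base point $c$ is the right endpoint. I would dispose of this by the reflection $\rho(s)=a+c-s$. The reflected function $\hat\theta=\theta\circ\rho$ again solves $\theta''+V_\theta=0$, its operator is $\hat{\mathcal S}=-\mathrm{d}^2/\mathrm{d}s^2+f\circ\rho$, and $u\mapsto u\circ\rho$ is an eigenvalue-preserving bijection between the Neumann eigenfunctions of $\mathcal S$ and those of $\hat{\mathcal S}$ on $[a,c]$, so $\textrm{Index}_{[a,c]^N}[\mathcal S]=\textrm{Index}_{[a,c]^N}[\hat{\mathcal S}]$. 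Now $\hat{\mathcal S}$ has left endpoint $a$ with $\hat f(a)=f(c)\neq 0$ and fundamental solution $\hat\theta'/\hat\theta'(a)$, so Proposition~\ref{prop-ourIndex} applies to it. Crucially, $\rho$ sends $\theta'\mapsto-\theta'$, which leaves every crossing of the vertical lines $\Gamma_m,\Gamma_M$ invariant (these are defined by $\theta$ alone), so the conjugate-point sum for $\hat{\mathcal S}$ over $(a,c]$ equals the $\Gamma_m$-minus-$\Gamma_M$ count of $\gamma$ over the domain $[a,c)$ of $\gamma_1$, namely $J[\gamma_1]$. The same inborn-term analysis as in (B) then yields $J[\gamma_1]$ when $\gamma(c)\in\Gamma_M$ and $J[\gamma_1]+1=J[\bar\gamma_1]$ when $\gamma(c)\in\Gamma_m$, which is (C).

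The main obstacle is exactly this left-endpoint issue: one must be sure that counting conjugate points with a \emph{moving left endpoint} obeys the same sign rule as Proposition~\ref{prop-ourIndex}. The reflection argument sidesteps re-deriving the Dauge--Helffer formula for a varying left endpoint; the only things needing genuine verification are that $\hat\theta$ solves the same equation, that the Neumann index is reflection-invariant, and that $J$ depends on $\theta$ but not on the sign of $\theta'$. A secondary point is the boundary bookkeeping at $a$: the crossing at $a$ is handled consistently (the value $\sigma_c=c$ in the reflected sum corresponds to $s=a$, and $\gamma_1$'s half-open domain $[a,c)$ includes $a$), and in the setting of Theorem~\ref{theo-IndJ} one has $V_\theta(\theta(a))\neq 0$, so $a$ is not conjugate and no endpoint degeneracy intervenes; throughout, the crossing toggled between each open arc $\gamma_i$ and its closure $\bar\gamma_i$ is the one at $c$, absorbed by the inborn term.
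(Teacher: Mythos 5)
Your proof is correct and takes essentially the same route as the paper's: part (A) via the observation that $h_2=\theta'/\theta'(c)$ is the fundamental Neumann solution at $c$ (so Proposition~\ref{prop-NeuConj} identifies conjugate points with crossings of stationary points of $V$), part (B) by applying Proposition~\ref{prop-ourIndex} with $f=-V_{\theta\theta}|_{\theta(s)}$, and part (C) by a reflection, which is exactly the paper's change of variable $x=-s$. If anything, your write-up is more explicit than the paper on the two points it leaves implicit, namely the inborn-term bookkeeping that turns the extra $+1$ at a min-boundary into the passage from $\gamma_i$ to its closure $\bar\gamma_i$, and the verification that the Neumann index and the count $J$ are invariant under the reflection.
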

\begin{proof}
(A) The function $h_2(s)={\theta'(s)}/{\theta'(c)}$ solves\re{IVPN} with initial condition $h_2(c)=1;\quad h_2'(c)=0$. Proposition~\ref{prop-NeuConj} implies that
 the set of conjugate points to $c$ with respect to $\mathcal S$ on $\mathcal D^N([c,b])$ is $\widetilde C= \{\sigma_c\in(c,b]: h_2'(\sigma_c)=0\}$. Since $h_2'(\sigma)=\frac{1}{\theta'(c)} \theta''(\sigma)= \frac{-1}{\theta'(c)} V_\theta(\theta(\sigma))$, each element $\sigma_c\in \widetilde C$ is such that $\theta(\sigma_c)$ is a stationary point of $V$.

(B) Next, we compute the Index of $\theta$ on $\mathcal D^N([c,b])$.  According to (A), the eigenvalues of $\mathcal S$ on $\mathcal D^N([c,s])$ change sign when the solution $\theta(s)$ of\re{thetapppot} crosses stationary points of $V$. Statement $(B)$ then follows by  application of Proposition~\ref{prop-ourIndex} with $f(s)=\left . -\sdd{V}{\theta}\right|_{\theta(s)}$. Therefore, for any $\sigma_c \in \widetilde C$, we have
 \be{}
 \textrm{Sign} [f(\sigma_c)] = 
\left \{ 
\begin{split}
+1 \quad&\textrm{if } \gamma(\sigma_c) \in \Gamma_M,\\
-1 \quad&\textrm{if } \gamma(\sigma_c) \in \Gamma_m.\\
 \end{split}
 \right.
 \ee
  (C) follows trivially from the previous case with the change of variable $x=-s$.\qed
 \end{proof}

Let $a$, $b$, $c \in \mathbb R$ such that $a<c<b$ and $f\in C^1([a,b])$. 
Define the functional $\mK{f}{a}{b}$ on $\mathcal C^N([a,b])$ as
\be{DefK}
\mK{f}{a}{b}: \mathcal C^N([a,b])\to \mathbb R: \tau\to \int_a^{b} (\tau'(x))^2 + f(x) \tau^2(x) \textrm{d} x.
\ee
Let $\theta(s)$ be a solution of\re{thetapppot} with natural boundary conditions. Then, the  condition\re{secvar}  for the functional\ret{myfunc}{FormL}  is equivalent to 
\be{rephraseprop1}
\exists k>0: ~~\forall \tau \in \mathcal C^N([a,b]):~~ \mK{f}{a}{b} [\tau]>k \braket{\tau|\tau},\qquad 
\ee
where $f(s)=-\left . \sdd{V}{\theta}\right|_{\theta(s)}$. Further, if
\be{rephraseprop2}
\exists \tau \in \mathcal C^N([a,b]): \ \mK{f}{a}{b} [\tau]<0, 
\ee
then the stationary function $\theta$ is not minimal with respect to perturbations in $\mathcal C^N([a,b])$.

The strategy of the proof of Theorem~\ref{theo-IndJ} will be as follows: we
split the interval $[a,b]$ into $[a,c]$ and $[c,b]$ and address the minimality of the solution with respect to perturbations defined on $[a,c]$ and on $[c,b]$ separately. We first show that if $\theta$ is not minimal on both $\mathcal D^N([a,c])$ and $\mathcal D^N([c,b])$, then it is not minimal on $\mathcal D^N([a,b])$. Next, we  show that for any $c\in(a,b)$, if 
\be{}
\textrm{Index}_{[a,c]^N}[\theta]=\textrm{Index}_{[c,b]^N}[\theta]=0,
\ee
 then $\theta$ minimises $\mathscr E$ on $\mathcal D^N([a,c])$, on $\mathcal D^N([c,b])$ and also on $\mathcal D^N([a,b])$.  If $\theta$  is minimal on one of the subsets and not on the other, the method developed in the present section is inconclusive.

\begin{lemma}
\label{lem-unst}
If $\exists \tau_1 \in \mathcal C^N([a,c])$ and $\exists \tau_2 \in \mathcal C^N([c,b])$ such that $\mK{f}{a}{c}[\tau_1]<0$ and $\mK{f}{c}{b}[\tau_2]<0$, then $\exists \tau \in \mathcal C^N([a,b])$ such that $\mK{f}{a}{b}[\tau]<0$.
\end{lemma}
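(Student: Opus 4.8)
The plan is to glue $\tau_1$ and $\tau_2$ into a single admissible perturbation on $[a,b]$ and then to exploit the additivity of the functional across the splitting point $c$. The essential observation is that for any function $\tau$ whose restrictions to $[a,c]$ and $[c,b]$ are $C^1$, the definition \re{DefK} gives the exact decomposition $\mK{f}{a}{b}[\tau]=\mK{f}{a}{c}[\tau]+\mK{f}{c}{b}[\tau]$ simply by splitting the integral. Hence, if I can produce a single $\tau\in\mathcal C^N([a,b])$ whose restrictions to the two subintervals are (scalar multiples of) $\tau_1$ and $\tau_2$, negativity on $[a,b]$ follows at once from negativity on each piece.

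The only real content is ensuring that the glued function lies in $\mathcal C^N([a,b])$, i.e. that it is genuinely $C^1$ across $c$ and satisfies the Neumann conditions at the outer endpoints. The endpoint conditions are inherited for free: $\tau_1'(a)=0$ and $\tau_2'(b)=0$ because $\tau_1\in\mathcal C^N([a,c])$ and $\tau_2\in\mathcal C^N([c,b])$. At the junction $c$, continuity of the derivative is also automatic, since the subinterval Neumann conditions force $\tau_1'(c)=0=\tau_2'(c)$, so both one-sided derivatives of the glued function vanish and agree. The one obstruction is continuity of the value at $c$, since in general $\tau_1(c)\neq\tau_2(c)$.

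To remove this obstruction I will combine a harmless constant shift with the scale invariance of the sign of $K$. First, since the constant function $1$ belongs to $\mathcal C^N([a,c])$ and $\epsilon\mapsto\mK{f}{a}{c}[\tau_1+\epsilon]$ is a quadratic polynomial in $\epsilon$ taking the negative value $\mK{f}{a}{c}[\tau_1]$ at $\epsilon=0$, I may replace $\tau_1$ by $\tau_1+\epsilon$ for a small $\epsilon$ chosen so that the functional stays negative and $\tau_1(c)\neq0$ (the shift changes neither $\tau_1'(a)$ nor $\tau_1'(c)$, so admissibility is preserved); the same is done for $\tau_2$. Thus I may assume $\tau_1(c)\neq0\neq\tau_2(c)$. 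Next, rescaling $\tau_1\mapsto\lambda\tau_1$ with $\lambda=\tau_2(c)/\tau_1(c)$ preserves negativity because $\mK{f}{a}{c}[\lambda\tau_1]=\lambda^2\,\mK{f}{a}{c}[\tau_1]<0$, and it enforces $\lambda\tau_1(c)=\tau_2(c)$.

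Finally I define $\tau:=\lambda\tau_1$ on $[a,c]$ and $\tau:=\tau_2$ on $[c,b]$. By the construction above $\tau$ is continuous at $c$, is $C^1$ across $c$ with $\tau'(c)=0$, satisfies $\tau'(a)=\tau'(b)=0$, and is not identically zero, so $\tau\in\mathcal C^N([a,b])$. The additive decomposition then gives $\mK{f}{a}{b}[\tau]=\mK{f}{a}{c}[\lambda\tau_1]+\mK{f}{c}{b}[\tau_2]<0$, which is the claim. I expect the only delicate point to be the value-matching at $c$; once it is handled by this shift-and-scale argument, everything else is routine bookkeeping.
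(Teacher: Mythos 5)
Your proof is correct and follows essentially the same route as the paper's: split the integral at $c$, glue $\tau_1$ and $\tau_2$ after matching values at $c$ by rescaling (negativity is preserved since $\mK{f}{a}{c}[\lambda\tau_1]=\lambda^2\mK{f}{a}{c}[\tau_1]$), and handle vanishing at $c$ by a small constant shift that keeps the quadratic functional negative. The paper organizes this as three explicit cases (A), (B), (C) with an explicit bound on $\epsilon$, whereas you streamline it into a single shift-then-scale procedure, but the underlying ideas are identical.
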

\begin{proof}
We need to distinguish three cases: 
\begin{itemize}
\item[(A)] $\tau_1(c)=\tau_2(c)$,
\item[(B)] $0\neq\tau_1(c)\neq \tau_2(c)\neq0$,
\item[(C)] $\tau_1(c)\neq \tau_2(c)$ but $\tau_i(c)=0$ with $i=1$ or $2$.
\end{itemize}
For each of these cases, we construct a function $\tau$ such that $\mK{f}{a}{b}[\tau]<0$.

In case (A), $\tau_A$ is
\be{taua}
\tau_A(x)=
\left \{
\begin{split}
\tau_1(x) &\qquad \textrm{if } x\in[a,c),\\
\tau_2(x) &\qquad \textrm{if } x\in[c,b],
\end{split}
\right .
\ee
and note that $\tau_A \in \mathcal C^N([a,b])$ by construction. Furthermore
\ben{}
\mK{f}{a}{b}[\tau_A] &=& \int_a^{b} (\tau_A'(x))^2 + f(x) \tau_A^2(x) \textrm{d} x
\nonumber\\
&=&  \int_a^{c} (\tau_A'(x))^2 + f(x) \tau_A^2(x) \textrm{d} x+\int_c^{b} (\tau_A'(x))^2 + f(x) \tau_A^2(x) \textrm{d} x\nonumber\\
&=&\mK{f}{a}{c}[\tau_1]+\mK{f}{c}{b}[\tau_2] <0.
\een

In case (B), define $\gamma=\frac{\tau_2(c)}{\tau_1(c)}$ such that the function 
\be{taub}
\tau_B(x)=
\left \{
\begin{split}
\gamma \tau_1(x) &\qquad \textrm{if } x\in[a,c),\\
\tau_2(x) &\qquad \textrm{if } x\in[c,b],
\end{split}
\right .
\ee
is continuous at $c$ and therefore $\tau_B\in\mathcal C^N([a,b])$ by construction. Then, 
\ben{}
\mK{f}{a}{b}[\tau_B] &=& \int_a^{b} (\tau_B'(x))^2 + f(x) \tau_B^2(x) \textrm{d} x\nonumber\\
&=&  \int_a^{c} \bigg(\gamma \tau_1'(x)\bigg)^2 + f(x)\,  \gamma^2\tau_1(x)^2 \textrm{d} x+\int_c^{b} \bigg(\tau_2'(x)\bigg)^2 + f(x)\, \tau_2(x)^2 \textrm{d} x\nonumber\\
&=&\gamma^2 \mK{f}{a}{c}[\tau_1]+\mK{f}{c}{b}[\tau_2] <0.
\een

In case (C), without loss of generality, assume $i=1$  and define $\nu=\tau_1+ \epsilon$ where $\epsilon \in \mathbb R$. Then, choosing $\epsilon$ such that
\be{}
0<\epsilon<\frac {|\mK{f}{a}{c}[\tau_1]|} 2 \textrm{Min} \left( \left \{\frac{1 }{2\int_a^c\Big| f(x) \tau_1(x) \Big|\textrm{d} x}, \frac{1}{\sqrt{\int_a^c| f(x)| \textrm{d}x}} \right\}\right),
\ee
we have
\ben{}
\mK{f}{a}{c}[\nu]&=& \int_a^{c} (\nu'(x))^2 + f(x) \nu^2(x) \textrm{d} x\nonumber\\
 &=& \mK{f}{a}{c}[\tau_1] + 2 \epsilon \int_a^c\Big[ f(x) \tau_1(x) \Big]\textrm{d} x+ \epsilon^2 \int_a^c f(x) \textrm{d}x<0.
\een
Then, we build $\tau_C$ as in case (B) by replacing $\tau_1$ by $\nu$. 
\qed
\end{proof}

\begin{lemma}
\label{lem-st}
Assume that $\exists M>0$ such that  $\forall\, \tau_1 \in \mathcal C^N([a,c]):~\mK{f}{a}{c}[\tau_1]>M \int_a^c {\tau_1}^2(s)~\textrm{d}s$ and $\forall\,  \tau_2 \in \mathcal C^N([c,b]):~\mK{f}{c}{b}[\tau_2]>M\int_c^b {\tau_2}^2(s)~\textrm{d}s$, then $\exists \bar M>0$ such that $\forall\,  \tau \in \mathcal C^N([a,b]):~\mK{f}{a}{b}[\tau]>\bar M \braket{\tau|\tau}$.
\end{lemma}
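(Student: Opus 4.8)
The plan is to take an arbitrary $\tau\in\mathcal C^N([a,b])$, split it into its restrictions on the two subintervals, bound each piece by the corresponding hypothesis, and add. The obstacle that makes this non-trivial is the interface at $c$: although $\tau|_{[a,c]}$ and $\tau|_{[c,b]}$ inherit $\tau'(a)=0$ and $\tau'(b)=0$, the interface derivative $\tau'(c)$ is completely free, so neither restriction need belong to $\mathcal C^N([a,c])$ or $\mathcal C^N([c,b])$. The hypotheses control $\mK{f}{a}{c}$ and $\mK{f}{c}{b}$ only on functions that \emph{also} satisfy the Neumann condition at $c$, and therefore cannot be applied to the restrictions as they stand. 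Removing this interface constraint is the crux of the argument.

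To overcome it I would first upgrade each hypothesis from $\mathcal C^N$ to the full space $C^1$. The key point is that the Neumann condition is the \emph{natural} boundary condition for the quadratic form $\mK{f}{a}{c}$: by the variational characterisation of the lowest eigenvalue, the smallest eigenvalue $\lambda_1$ of $\mathcal S=-\sdd{}{s}+f$ on $\mathcal D^N([a,c])$ equals the infimum of the Rayleigh quotient $\mK{f}{a}{c}[\psi]/\braket{\psi|\psi}$ taken over \emph{all} of $C^1([a,c])$, with no constraint at either endpoint. Its minimiser is the smooth ground state, which satisfies $\psi'(a)=\psi'(c)=0$ and hence lies in $\mathcal C^N([a,c])$; evaluating the hypothesis there gives $\lambda_1>M$ (using the density equivalence of Section~\ref{sec-num} between $\mathcal C^N$ and $\mathcal D^N$). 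Consequently
\be{lemst-ext}
\mK{f}{a}{c}[\psi]\ \geq\ \lambda_1\int_a^c\psi^2\,\textrm{d}x\qquad\text{for every }\psi\in C^1([a,c]),
\ee
and an identical argument on $[c,b]$ produces a smallest eigenvalue $\mu_1>M$ with $\mK{f}{c}{b}[\psi]\geq\mu_1\int_c^b\psi^2\,\textrm{d}x$ for every $\psi\in C^1([c,b])$.

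Once the bounds hold on the unconstrained spaces the recombination is immediate. For $\tau\in\mathcal C^N([a,b])$ the restrictions lie in $C^1([a,c])$ and $C^1([c,b])$, and merely splitting the integral yields the exact additivity that, together with\re{lemst-ext} and its analogue, gives
\be{lemst-final}
\mK{f}{a}{b}[\tau]=\mK{f}{a}{c}[\tau|_{[a,c]}]+\mK{f}{c}{b}[\tau|_{[c,b]}]\ \geq\ \lambda_1\int_a^c\tau^2\,\textrm{d}x+\mu_1\int_c^b\tau^2\,\textrm{d}x\ \geq\ M\,\braket{\tau|\tau}.
\ee
Since $\min(\lambda_1,\mu_1)>M$ and $\mathcal C^N([a,b])$ excludes the zero function, the final inequality is strict for every admissible $\tau$, so $\bar M=M$ delivers the required strong positivity.

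Finally I would indicate where each ingredient is licensed. The existence of a lowest Neumann eigenvalue attained by a smooth eigenfunction is the regular Sturm–Liouville theory already used in Section~\ref{sec-num}; the only genuinely analytic step is the identification\re{lemst-ext} of the Rayleigh infimum over $C^1$ with $\lambda_1$. I expect this identification—the statement that coercivity survives the removal of the interface constraint because the Neumann condition is natural—to be the main obstacle, and it is where I would spend most care. It can alternatively be established by an elementary approximation avoiding spectral theory: given $\psi\in C^1([a,c])$ with $\psi'(a)=0$, modify $\psi$ on a shrinking interval $[c-\delta,c]$ to force $\psi'(c)=0$; the resulting $\psi_\delta\in\mathcal C^N([a,c])$ satisfies $\mK{f}{a}{c}[\psi_\delta]\to\mK{f}{a}{c}[\psi]$ and $\braket{\psi_\delta|\psi_\delta}\to\braket{\psi|\psi}$ as $\delta\to0$, so the strict bound on $\mathcal C^N([a,c])$ passes in the limit to the non-strict bound $\mK{f}{a}{c}[\psi]\geq M\int_a^c\psi^2\,\textrm{d}x$. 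With only this weaker bound the same splitting gives $\mK{f}{a}{b}[\tau]\geq M\braket{\tau|\tau}$, and any $\bar M\in(0,M)$ then furnishes the strict conclusion; either route closes the lemma in one line once the extension\re{lemst-ext} is in place.
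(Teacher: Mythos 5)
Your proposal is correct, and it actually contains two routes of different status relative to the paper. Your ``alternative'' elementary approximation is, in essence, the paper's own proof: the paper takes $\tau\in\mathcal C^N([a,b])$ and replaces it on $[c-\epsilon,c]$ and $[c,c+\epsilon]$ by quadratics $p_\epsilon,q_\epsilon$ that match value and slope at $c\mp\epsilon$ and have vanishing derivative at $c$, producing $\tau_{1\epsilon}\in\mathcal C^N([a,c])$ and $\tau_{2\epsilon}\in\mathcal C^N([c,b])$; a rescaling of the patched integrals shows $\mK{f}{a}{c}[\tau_{1\epsilon}]+\mK{f}{c}{b}[\tau_{2\epsilon}]=\mK{f}{a}{b}[\tau]+O(\epsilon)$ and similarly for the $L^2$ norms, whence $\mK{f}{a}{b}[\tau]\geq M\braket{\tau|\tau}+O(\epsilon)$; since the $\epsilon$-independent terms are fixed for each $\tau$, letting $\epsilon\to0$ and taking any $\bar M\in(0,M)$ closes the lemma --- exactly your limiting argument, phrased as a modification of $\tau$ itself rather than of its restrictions. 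Your \emph{primary} route is genuinely different: instead of patching, you remove the interface constraint once and for all by identifying the infimum of the Rayleigh quotient of $\mK{f}{a}{c}$ over the unconstrained space with the lowest Neumann eigenvalue $\lambda_1$ (Neumann being the natural condition for this quadratic form), evaluating the hypothesis on the ground state --- which already lies in $\mathcal C^N([a,c])$, so the density remark you invoke is superfluous --- to get $\lambda_1>M$, and then recombining by mere additivity of the integral over $[a,c]$ and $[c,b]$. This buys a sharper constant ($\bar M=M$ with strict inequality, versus the paper's $\bar M<M$) and a one-line recombination, at the price of invoking the variational characterisation of the lowest eigenvalue on the form domain without endpoint constraints, a standard fact of regular Sturm--Liouville theory consistent with what the paper cites in Section~\ref{sec-num} and Appendix~\ref{app-itworks} but not proved there; note also that for the splitting you only ever apply the extended bound to restrictions of $\tau$, which retain the Neumann condition at the outer endpoint, so the weaker extension from $\mathcal C^N([a,c])$ to $\{\psi\in C^1([a,c]):\psi'(a)=0\}$ already suffices. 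Either route establishes Lemma~\ref{lem-st}.
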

\begin{proof}
For $\tau\in \mathcal C^N([a,b])$ we define  
$p_\epsilon(x)=p_0 +p_1 \big(x-(c-\epsilon)\big) + p_2 \big(x-(c-\epsilon)\big)^2$ and $q_\epsilon(x)=q_0 +q_1 \big(x-(c+\epsilon)\big) + q_2 \big(x-(c+\epsilon)\big)^2$ where $p_0=\tau(c-\epsilon)$, $p_1=\tau'(c-\epsilon)$, $p_2=\frac{\tau'(c-\epsilon)}{2 \epsilon}$ and $q_0=\tau(c+\epsilon)$, $q_1=\tau'(c+\epsilon)$, $q_2=-\frac{\tau'(\epsilon)}{2 (c+\epsilon)}$ such that $p(c-\epsilon)=\tau(c-\epsilon)$, $p'(c-\epsilon)=\tau'(c-\epsilon)$, $p'(c)=0$ and $q(c+\epsilon)=\tau(c+\epsilon)$, $q'(c+\epsilon)=\tau'(c+\epsilon)$, $q'(c)=0$. 

Next, define the functions $\tau_{1\epsilon}$ and $\tau_{2\epsilon}$ as
\ben{}
\tau_{1\epsilon}(x)& =&
\left \{
\begin{split}
\tau(x) &\qquad \textrm{if } a\leq x< c-\epsilon,\\
p_\epsilon(x) &\qquad \textrm{if } c-\epsilon\leq x\leq c,\\
\end{split}
\right . \nonumber\\
 \tau_{2\epsilon}(x) &=&
\left \{
\begin{split}
q_\epsilon(x) &\qquad \textrm{if } c\leq x\leq c+\epsilon,\\
\tau(x) &\qquad \textrm{if } c+\epsilon< x< b,
\end{split}
\right .  \nonumber
\een
so that by construction, $\tau_{1\epsilon}\in \mathcal C^N([a,c])$ and $\tau_{2 \epsilon}\in \mathcal C^N([c,b])$.

Then, consider 
\ben{}
&&\mK{f}{a}{c}[\tau_{1\epsilon}]+ \mK{f}{c}{b}[\tau_{2\epsilon}] =\nonumber\\
&& \int_a^{c-\epsilon} (\tau'(x))^2 + f(x) \tau^2(x) \textrm{d} x+ \int_{c-\epsilon}^c \left(\sd{p_\epsilon}{x}\right)^2 + f(x) p_\epsilon^2(x) \textrm{d} x \nonumber\\
&&+ \int_{c}^{c+\epsilon} \left(\sd{q_\epsilon}{x}\right)^2 + f(x) q_\epsilon^2(x) \textrm{d} x 
+\int_{c+\epsilon}^{b} (\tau'(x))^2 + f(x) \tau^2(x) \textrm{d} x\nonumber\\
&=&\int_a^{c-\epsilon} (\tau'(x))^2 + f(x) \tau^2(x) \textrm{d} x +\int_{c+\epsilon}^{b} (\tau'(x))^2 + f(x) \tau^2(x) \textrm{d} x \nonumber\\
&&+ \epsilon \int_{0}^1 \bigg(\tau'(c-\epsilon)\bigg)^2 (1+z)^2 +  f(z) \Bigg(\bigg(\tau(c-\epsilon)\bigg)^2  + O(\epsilon)\Bigg) \textrm{d} z\nonumber\\
&&+\epsilon \int_{-1}^0 \bigg(\tau'(c+\epsilon)\bigg)^2 (1-w)^2 +  f(z) \Bigg(\bigg(\tau(c+\epsilon)\bigg)^2  + O(\epsilon)\Bigg) \textrm{d} w\nonumber\\
&=&\mK{f}{a}{b}[\tau] + O(\epsilon),\label{connectepsilon}
\een
where the second equality comes after the changes of variable $\epsilon z = x-(c-\epsilon)$ and $\epsilon w= x-(c+\epsilon)$ in the integrals involving $p_\epsilon$ and $q_\epsilon$ respectively. 

Since $\tau_{1\epsilon}\in \mathcal C^N([a,c])$ and $\tau_{2 \epsilon}\in \mathcal C^N([c,b])$, we have
\ben{}
\mK{f}{a}{c}[\tau_{1\epsilon}]+ \mK{f}{c}{b}[\tau_{2\epsilon}]&\geq& M \left(\int_a^c \tau_1^2(s)~\textrm{d}s +\int_c^b  \tau_2^2(s)~\textrm{d}s  \right) \nonumber\\
&=& M \left(\braket{\tau|\tau}  + O(\epsilon)\right). \label{connectepsilon2}
\een
Bringing\re{connectepsilon} and\re{connectepsilon2} together, we have 
\be{connectepsilonbringhome}
\mK{f}{a}{b}[\tau] \geq M \braket{\tau|\tau} + O(\epsilon). 
\ee
Since the left hand side and the first term in the right hand side of\re{connectepsilonbringhome} are independent of the  arbitrarily small $\epsilon$, for any $\bar M$ such that $0<\bar M<M$, we have $\mK{f}{a}{b}[\tau] \geq \bar M \braket{\tau|\tau}$.
\qed 
\end{proof}

In the following Lemma, we establish an identity between the two indices $I[\eta]$ and $J[\eta]$ defined in Section~\ref{sec-state}.
\begin{lemma}\label{lem-relIJ}
Let $\theta:[s_1,s_2]\to \mathbb R$ be a solution of\re{thetapppot1} such that $\nexists s\in[s_1,s_2]: \theta'(s)=0 \textrm{ and } \left . \sd{V}{\theta}\right|_{\theta(s)}=0$, and let $\eta$
its associated trajectory in phase plane. Then 
\be{relJI}
I[\eta] -1\leq J[\eta] \leq I[\eta]+1,
\ee
and
\be{botlimJ}
J[\eta]\geq-1.
\ee
\end{lemma}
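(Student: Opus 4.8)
The plan is to decompose $[s_1,s_2]$ into maximal intervals of monotonicity of $\theta$ and to compute, segment by segment, the net contribution to $J[\eta]$, then to sum. First I would use the zeros of $\theta'$ — there are $I[\eta]$ of them — to cut $[s_1,s_2]$ into monotone pieces; on each such piece $\theta$ sweeps an interval of values and crosses every critical point of $V$ lying strictly inside that interval exactly once, and transversally, because at a critical point $\sd{V}{\theta}=0$ while $\theta'\neq0$ by the hypothesis that $\theta'$ and $V_\theta$ never vanish together. Since the standing assumption forbids $V_\theta$ and $V_{\theta\theta}$ from vanishing simultaneously, all critical points of $V$ are non-degenerate and therefore alternate between minima and maxima along the $\theta$-axis.

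The key device is the step function $\phi(\theta)=\tfrac12\big(1+\textrm{Sign}[\,\sd{V}{\theta}\,]\big)\in\{0,1\}$, which jumps by $+1$ at each minimum of $V$ and by $-1$ at each maximum; the very fact that $\phi$ stays in $\{0,1\}$ is the min/max alternation. Consequently the signed number of critical points (minima counted $+1$, maxima $-1$) inside an interval $(L,R)$ equals $\phi(R)-\phi(L)$, so the contribution of a monotone segment sweeping $[L,R]$ to $J[\eta]$ is $\phi(R)-\phi(L)\in\{-1,0,1\}$. Writing the segment endpoints in the order dictated by the orientation and recording the alternating directions $\epsilon_j=\pm1$, the total becomes a signed telescoping sum $J[\eta]=\sum_j \epsilon_j\big(\phi(a_{j+1})-\phi(a_j)\big)$, where the $a_j$ are the successive turning values together with $\theta(s_1)$ and $\theta(s_2)$, and $\epsilon_j=\epsilon_0(-1)^{j}$.

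Next I would pin down $\phi$ at the interior turning values. At a turning point $\theta''=-\sd{V}{\theta}\neq0$, so $\theta$ has a strict local maximum (where $\sd{V}{\theta}>0$, i.e. $\phi=1$) or a strict local minimum (where $\sd{V}{\theta}<0$, i.e. $\phi=0$); these alternate along the trajectory and are tied to the orientation, namely $\phi(a_j)=\tfrac12\big(1+\epsilon_0(-1)^{j-1}\big)$. Substituting this into the telescoping sum, every interior term cancels and one is left with a contribution from the two endpoint values $\phi(\theta(s_1)),\phi(\theta(s_2))\in\{0,1\}$ plus a parity term; a short case split on the parity of $I[\eta]$ then gives $J[\eta]-I[\eta]=\epsilon_0\big(\phi(\theta(s_2))-\phi(\theta(s_1))\big)$ when $I[\eta]$ is even and $J[\eta]-I[\eta]=\epsilon_0\big(1-\phi(\theta(s_1))-\phi(\theta(s_2))\big)$ when $I[\eta]$ is odd. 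In either case the right-hand side lies in $\{-1,0,1\}$, which is exactly $I[\eta]-1\le J[\eta]\le I[\eta]+1$; and since $I[\eta]\ge0$ (indeed $I[\eta]\ge1$ in the odd case) both formulas force $J[\eta]\ge-1$.

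The step I expect to be the main obstacle is the bookkeeping when a turning point falls on an endpoint $s_1$ or $s_2$ (so that the corresponding monotone segment is empty) and the verification that the two families of crossings are genuinely transversal and never collide. These are precisely the places where the hypothesis $\nexists s:\theta'(s)=0=V_\theta(\theta(s))$ is needed, both to guarantee that turning values are never critical points of $V$ (so that $\phi$ is unambiguous there) and to keep the segment decomposition clean. A degenerate end segment merely drops the corresponding endpoint term from the sum, which only tightens the bound, so the estimate survives; making this endpoint analysis rigorous is the bulk of the careful writing.
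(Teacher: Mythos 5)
Your argument is correct, but it takes a genuinely different route from the paper's. The paper avoids any decomposition by studying the single composite function $X(s)=V(\theta(s))$: since $X'=\theta'\,\sd{V}{\theta}\big|_{\theta(s)}$ and $X''=(\theta')^2\,\sdd{V}{\theta}\big|_{\theta(s)}-\big(\sd{V}{\theta}\big|_{\theta(s)}\big)^2$, every crossing of $\Gamma_h$ is a nondegenerate maximum of $X$, while every crossing of $\Gamma_m$ (resp.\ $\Gamma_M$) is a nondegenerate minimum (resp.\ maximum) of $X$; because nondegenerate minima and maxima of a $C^2$ function on a connected domain alternate, the signed count $H[X]=\#\{\text{minima}\}-\#\{\text{maxima}\}$ lies in $[-1,1]$, and the identity $H[X]=J[\eta]-I[\eta]$ gives\re{relJI} in one stroke, with endpoint stationary points of $X$ handled uniformly by the same alternation argument. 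Your monotone-segment decomposition with the step function $\phi=\tfrac12(1+\textrm{Sign}[V_\theta])$ and the telescoping sum is heavier --- it needs the parity case split and the endpoint bookkeeping you flag --- but it does close: the pinned values $\phi(a_j)=\tfrac12\big(1+\epsilon_0(-1)^{j-1}\big)$ at interior turning points are right, your two parity formulas for $J-I$ are correct, and a finite case check (start and/or end a turning point or not) confirms that pinned endpoints only push $J-I$ downwards, exactly as you predict; in exchange, your route yields more than the paper's, namely explicit formulas for $J[\eta]-I[\eta]$ in terms of the initial direction $\epsilon_0$ and the endpoint values of $\phi$, so that $J$ can be read off from boundary data alone. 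Two points to tighten when writing it up: first, state explicitly that zeros of $\theta'$ are isolated, hence finitely many on the compact interval, because $\theta''=-\sd{V}{\theta}\big|_{\theta}\neq0$ at any such zero by hypothesis --- this is what makes the decomposition finite; second, your plan is silent about the case where a trajectory endpoint lies exactly on a min- or max-boundary, i.e.\ $V_\theta(\theta(s_i))=0$ with $\theta'(s_i)\neq0$, where $\phi$ is undefined yet the crossing is counted in $J$ by\re{defJ}. That case is fixable within your framework --- along a strictly monotone sweep the crossed critical points alternate min/max, so each segment's net contribution, endpoint crossings included, still lies in $\{-1,0,1\}$ --- but it deserves a sentence, and it is precisely the kind of edge case the paper's $H[X]$ device absorbs for free.
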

\begin{proof} Let $f$ be a $C^2$ function of one variable over a connected domain $G$ and such that $f'$ and $f''$ do not vanish simultaneously. Consider the 
 functional $H$, that counts the number of minimal points minus the number of maximal points of $f$ over its domain $G$ 
\ben{}
H[f]&=&\# \{s\in G: f'(s)=0 \textrm{ and } f''(s)>0\} \nonumber\\
&&\qquad\qquad- \# \{s\in G: f'(s)=0 \textrm{ and } f''(s)<0\}.\label{defH}
\een
Since $f$ is $C^2$ over a connected domain, its minima and maxima are interspersed, $i.e.$  between any two consecutive minima (resp. maxima) there is one and only maximum (resp. minimum). Therefore, for such a function, we have
\be{Hgen}
-1\leq H[f] \leq 1. 
\ee 
The function $X:s\in[s_1,s_2]\to V(\theta(s))$ is $C^2 ([s_1,s_2])$ since $\theta\in C^2 ([s_1,s_2])$ and so is $V$. Accordingly\re{Hgen} implies that 
\be{HX}
-1\leq H[X]\leq 1.
\ee
Next we compute $H[X]$. First note that
\ben{Xp}
 X'(s)&=&\theta'(s) \,  \left . \sd{V}{\theta}\right|_{\theta(s)},\\
 X''(s)&=&\left (\theta'(s)\right)^2 ~ \left . \sdd{V}{\theta}\right|_{\theta(s)} - \left (\left . \sd{V}{\theta}\right|_{\theta(s)}\right)^2.\label{Xpp}
\een
According to\re{Xp},  any stationary point $z\in[s_1,s_2]$ of $X$ is due to the vanishing of either $\theta'(z)$ or $\left . \sd{V}{\theta}\right|_{\theta(z)}$. In the former case, which corresponds to $\eta(z)\in \Gamma_h$, $X''(z)<0$ according to\re{Xpp}. The latter case splits into two sub-cases: either $\left . \sdd{V}{\theta}\right|_{\theta(s)}>0$ so that $z$ is a minimum point and  $\eta(z)\in\Gamma_m$, or $\left . \sdd{V}{\theta}\right|_{\theta(z)}<0$ so that $z$ is a maximum point and $\eta(z)\in \Gamma_M$; remember that $\left . \sdd{V}{\theta}\right|_{\theta(z)}\neq 0$ since, by assumption, $V_\theta$ and $V_{\theta\theta}$ do not vanish simultaneously. Accordingly,
\ben{}
H[X] &=& \#\{ s\in [s_1,s_2]: \eta(s)\in\Gamma_m\} - \#\{ s\in [s_1,s_2]: \eta(s)\in\Gamma_M\}\nonumber\\
&&\qquad \qquad \qquad- \#\{ s\in [s_1,s_2]: \eta(s)\in\Gamma_h\}
\nonumber\\
&=& J[\eta]-I[\eta],\label{HIJ}
\een
and\re{relJI} follows from substituting\re{HIJ} in\re{HX}. 

Finally, the inequality\re{botlimJ} follows from\re{relJI} and the fact that $I[\eta]\geq0$ by definition. \qed
\end{proof}

\begin{proof}[Proof of theorem~\ref{theo-IndJ}]
For an arbitrary point $c\in(a,b)$, define the arcs $\gamma_1:s\in[a,c)\to \gamma(s)$ and $\gamma_2:s\in(c,b]\to\gamma(s)$ and their closure $\bar \gamma_1:s\in[a,c]\to \gamma(s)$ and $\bar \gamma_2:s\in[c,b]\to\gamma(s)$. 

First, consider the case $J[\gamma]<0$ which, by Lemma~\ref{lem-relIJ}, implies $J[\gamma]=-1$. Choose $c$ such that $\gamma(c) \in \Gamma_M$. Since $\gamma_1$ and $\gamma_2$ both exclude the maximum of $V$ at $\theta(c)$, we have
\be{splitJm1}
J[\gamma]= J[\gamma_1] + J[\gamma_2] -1=-1.
\ee
We also have, $J[\gamma_i]\geq0$, $i=1,2$, since, by contradiction, if $J[\gamma_i]=-1$, then $J[\bar \gamma_i]=-2$ which according to Lemma~\ref{lem-relIJ} is impossible. These inequalities together with\re{splitJm1} imply
\be{Ji0Jm1}
J[\gamma_1]=J[\gamma_2]=0.
\ee
Lemma~\ref{lem-IndexSub} then implies 
\be{Ind0sJm1}
\textrm{Index}_{\mathcal D^N([a,c])}[\theta]= \textrm{Index}_{\mathcal D^N([c,b])}[\theta]= 0.
\ee

Finally, Lemma~\ref{lem-IndexSub} and the assumption that both $\left . \sd{V}{\theta}\right|_{\theta(a)}\neq 0$ and $\left . \sd{V}{\theta}\right|_{\theta(b)}\neq 0$ imply that $c$ is conjugated neither to $a$ nor to $b$ so that the operator $\mathcal S$ has no null eigenvalue on $\mathcal D^N([a,c])$ or $\mathcal D^N([c,b])$. This fact together with Eq.\re{Ind0sJm1} implies that all eigenvalues of $\mathcal S$ on both $\mathcal D^N([a,c])$ and $\mathcal D^N([c,b])$ are strictly positive. It is therefore possible to choose a real number $M>0$ strictly less than both the smallest eigenvalues of $\mathcal S$ on $\mathcal D^N([a,c])$ and on $\mathcal D^N([c,b])$. By definition\re{DefK} of the functional $K$, we have $\forall\, \tau_1 \in \mathcal D^N([a,c]):~ \mK{f}{a}{c}[\tau_1]>M \int_a^c {\tau_1}^2\,\textrm{d}s$ and $\forall\,  \tau_2 \in \mathcal D^N([c,b]):~\mK{f}{c}{b}[\tau_2]>M\int_c^b {\tau_2}^2\,\textrm{d}s$. Finally, since $\mathcal D^N$ is dense in $\mathcal C^N$, we apply Lemma~\ref{lem-st} so that 
\be{}
\exists\bar M>0:~\forall \tau \in \mathcal C^N([a,b]):~\mK{f}{a}{b}[\tau]>\bar M \braket{\tau|\tau}.\ee
 The second variation of $\mathscr E$ is positive definite and $\theta$ is therefore a minimum.
 
Second, consider the case $J[\gamma]>0$. Choose the smallest possible $c\in(a,b)$ such that $J[\bar \gamma_1]=1$ and $\gamma(c)\in\Gamma_m$. Such a $c$ always exists since  $\lim_{c\to a } J[\bar \gamma_1]=0$ and $\lim_{c\to b} J[\bar \gamma_1]=J[\gamma]\geq1$ and $J[\bar \gamma_1]$ changes by increments of $\pm1$ while $c$ continuously spans $(a,b)$. We then have 
\be{splitJg0}
J[\gamma]= J[\bar \gamma_1] + J[\bar \gamma_2]-1>0,
\ee
where we have subtracted one because the first two terms counted the minimum of $V$ at $\theta(c)$ twice. Since $c$ was chosen so that $J[\bar \gamma_1]=1$, Eq.\re{splitJg0} implies that $J[\bar\gamma_2]>0$. 

Lemma~\ref{lem-IndexSub} then implies 
\be{IndJg0}
\textrm{Index}_{\mathcal D^N([a,c])}[\theta]=1\quad \textrm{and}\qquad \textrm{Index}_{\mathcal D^N([c,b])}[\theta]>0,
\ee
so that the operator $\mathcal S$ has at least one negative eigenvalue on both sub-domains. Let $\tau_1\in \mathcal D^N([a,c])$ and $\tau_2\in \mathcal D^N([c,b])$ be the corresponding eigenfunctions. By construction, 
$\mK{f}{a}{c}[\tau_1]<0$ and $\mK{f}{c}{b}[\tau_2]<0$ and, since $\mathcal D^N\subset\mathcal C^N$, Lemma~\ref{lem-unst} ensures that $\exists \tau\in  \mathcal C^N([a,b]):\mK{f}{a}{b}[\tau]<0 $. We conclude that  $\theta(s)$ is not minimal with respect to perturbations in $\mathcal C^N([a,b])$.\qed
\end{proof}

\begin{corollary}\label{cor-I2}
Let $\theta:[a,b]\to \mathbb R$ be a stationary function of the functional $\mathscr E$  such that $\left. \sd{V}{\theta}\right|_{\theta(s)}\neq 0$ on $s=a,b$ and $\nexists s\in[a,b]: \theta'(s)=0\textrm{ and }\left . \sd{V}{\theta}\right|_{\theta(s)}=0$. Let $\gamma$ be its associated trajectory in the phase plane. Then,
if $I[\gamma]\geq2$, $\theta$ is not a local minimiser of $\mathscr E$.
\end{corollary}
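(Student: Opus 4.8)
The plan is to derive this statement as an immediate consequence of the instability branch of Theorem~\ref{theo-IndJ}, using the index comparison of Lemma~\ref{lem-relIJ} to translate the hypothesis on $I[\gamma]$ into the hypothesis on $J[\gamma]$ required there. First I would verify that the standing assumptions of the corollary are exactly those needed by both results: the endpoint nondegeneracy $\left.\sd{V}{\theta}\right|_{\theta(a)}\neq 0 \neq \left.\sd{V}{\theta}\right|_{\theta(b)}$ together with the condition that there is no $s\in[a,b]$ with $\theta'(s)=0$ and $\left.\sd{V}{\theta}\right|_{\theta(s)}=0$ are precisely the hypotheses of Theorem~\ref{theo-IndJ}, while the latter non-simultaneous-vanishing condition (with $[s_1,s_2]=[a,b]$) is exactly what Lemma~\ref{lem-relIJ} demands.

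Since $\gamma$ satisfies the hypothesis of Lemma~\ref{lem-relIJ}, I would then invoke the left-hand inequality in~(\ref{relJI}), namely $J[\gamma]\geq I[\gamma]-1$. Combining this with the assumption $I[\gamma]\geq 2$ gives
\[
J[\gamma]\;\geq\; I[\gamma]-1\;\geq\; 1\;>\;0.
\]
With $J[\gamma]>0$ in hand, the instability half of Theorem~\ref{theo-IndJ} applies directly and yields that $\theta$ is not a local minimiser of $\mathscr E$ (with respect to perturbations in $\mathcal C^N([a,b])$), which is the claim.

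I anticipate no genuine obstacle: the corollary is a formal chaining of Lemma~\ref{lem-relIJ} with the $J[\gamma]>0$ case of Theorem~\ref{theo-IndJ}, and it is stated as a corollary precisely because all the substantive work has already been carried out. The only point deserving attention is the bookkeeping of hypotheses---confirming that the endpoint nondegeneracy and non-simultaneous-vanishing assumptions stated here coincide with those under which both the index inequality~(\ref{relJI}) and the instability conclusion of Theorem~\ref{theo-IndJ} were established---so that both results may be invoked without further qualification.
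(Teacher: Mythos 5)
Your proposal is correct and follows exactly the paper's own argument: apply the left-hand inequality $J[\gamma]\geq I[\gamma]-1$ of Lemma~\ref{lem-relIJ} to deduce $J[\gamma]>0$ from $I[\gamma]\geq 2$, then invoke the instability case of Theorem~\ref{theo-IndJ}. Your additional verification that the corollary's hypotheses match those of both the lemma and the theorem is sound bookkeeping that the paper leaves implicit.
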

\begin{proof}
Substituting $I[\gamma]\geq2$ in Lemma~\ref{lem-relIJ} implies $J[\gamma]>0$ in which case Theorem~\ref{theo-IndJ} concludes that $\theta$ not minimal. \qed
\end{proof}

An important consequence of Corollary~\ref{cor-I2} is that, if $A=0$ in\re{thetapppot}, $\theta'(a)=\theta'(b)=0$ and therefore $I[\gamma]\geq 2$. Accordingly, all solutions of finite length such that $\nexists s\in[a,b]: \theta'(s)=0\textrm{ and }\left . \sd{V}{\theta}\right|_{\theta(s)}=0$ are unstable. We note that in such a case, if there exists a point $c \in [a,b]: \theta'(c)=0\textrm{ and }\left . \sd{V}{\theta}\right|_{\theta(c)}=0$ then the unique solution to the IVP\re{thetapppot1} with initial values $\theta(c)=\theta_c$ and $\theta'(c)=0$ is $\theta(s)=\theta_c$, $\forall s\in[a,b]$.  Corollary~\ref{cor-I2}  implies that, whenever $A=0$, the only stable stationary functions are constant. In that case, Proposition~\ref{cor-3} and Eq.\re{thetapppot1} imply that the only possible values of the constant are the maximal points of $V$.

 We are left with the  intermediate case, $J=0$, for which we can apply a different necessary condition for a stationary function to be minimal.

\begin{theorem}
\label{theo-xi}
Let $\theta$ be a stationary function of $\mathscr E$ such that  $\left . \sd{V}{\theta}\right|_{\theta(a)}\neq0$  and  $\left . \sd{V}{\theta}\right|_{\theta(b)}\neq0$. Define
\begin{equation}
\beta=\xi(a)-\xi(b),\qquad \textrm{where\ \ \ \ }\xi(s)=\theta'(s) \left.\sd{V}{\theta}\right|_{\theta(s)}.
\end{equation}
Then, if $\beta\leq 0$ the function $\theta$ is not minimal on $\mathcal C^N([a,b])$. 
%%%%%%%%%%%%%%
\end{theorem}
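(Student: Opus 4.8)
The plan is to produce an admissible perturbation $\tau\in\mathcal C^N([a,b])$ on which the second variation is strictly negative and then invoke the criterion \re{rephraseprop2}. The guiding observation is that the single direction $\tau=\theta'$ already ``sees'' $\beta$. Indeed, differentiating the Euler--Lagrange equation \re{thetapppot1} gives $\theta'''=-\left.\sdd{V}{\theta}\right|_{\theta}\theta'$, so that $\mathcal S\theta'=0$, and integrating the first term of \re{secvar2} by parts yields $\delta^2\mathscr E_\theta[\theta']=\left[\theta'\theta''\right]_a^b$. Since $\theta''=-\left.\sd{V}{\theta}\right|_{\theta}$ we have $\theta'\theta''=-\xi$, hence $\delta^2\mathscr E_\theta[\theta']=\xi(a)-\xi(b)=\beta$. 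So $\theta'$ is the test function of choice, and $\beta\le 0$ is exactly the statement that it does not increase the energy to second order.

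The obstruction is that $\theta'$ is generally \emph{not} in $\mathcal C^N([a,b])$: its endpoint derivatives are $\theta''(a)=-\left.\sd{V}{\theta}\right|_{\theta(a)}$ and $\theta''(b)=-\left.\sd{V}{\theta}\right|_{\theta(b)}$, which are nonzero precisely by the hypothesis of the theorem. I would therefore build a one-parameter family $\tau_\epsilon\in\mathcal C^N([a,b])$ that coincides with $\theta'$ on $[a+\epsilon,b-\epsilon]$ and, on each end interval $[a,a+\epsilon]$ and $[b-\epsilon,b]$, is replaced by a quadratic interpolant (exactly as in the construction of Lemma~\ref{lem-st}) chosen so that $\tau_\epsilon$ is $C^1$, matches $\theta'$ and $\theta''$ at $a+\epsilon$ and $b-\epsilon$, and satisfies $\tau_\epsilon'(a)=\tau_\epsilon'(b)=0$. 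Since $\theta'$ cannot vanish identically (otherwise $\theta$ would be constant and $\left.\sd{V}{\theta}\right|_{\theta(a)}=0$), $\tau_\epsilon\neq0$ and is genuinely admissible.

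Next I would evaluate $\mK{f}{a}{b}[\tau_\epsilon]=\delta^2\mathscr E_\theta[\tau_\epsilon]$. On the middle interval the same integration by parts gives $\left[\theta'\theta''\right]_{a+\epsilon}^{b-\epsilon}$, which tends to $\beta$ as $\epsilon\to0$, while the two end contributions are $O(\epsilon)$ because their integrands are bounded over intervals of length $\epsilon$. This already disposes of the case $\beta<0$: for $\epsilon$ small enough $\delta^2\mathscr E_\theta[\tau_\epsilon]<0$. The delicate case is $\beta=0$, where one must extract the first-order coefficient in $\epsilon$. Expanding the boundary products $\theta'(a+\epsilon)\theta''(a+\epsilon)$ and $\theta'(b-\epsilon)\theta''(b-\epsilon)$ together with the two interpolant integrals, the cross terms $\theta'\theta'''$ cancel between the middle and the ends (again via $\theta'''=-\left.\sdd{V}{\theta}\right|_{\theta}\theta'$), leaving $\delta^2\mathscr E_\theta[\tau_\epsilon]=\beta-c\,\epsilon\big(\theta''(a)^2+\theta''(b)^2\big)+O(\epsilon^2)$ for a strictly positive constant $c$ (the quadratic interpolant gives $c=\tfrac23$).

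The main obstacle is precisely this bookkeeping of the $O(\epsilon)$ terms and the verification that the surviving first-order coefficient is strictly negative. It is here that the hypothesis $\left.\sd{V}{\theta}\right|_{\theta(a)}\neq0\neq\left.\sd{V}{\theta}\right|_{\theta(b)}$ is indispensable: it forces $\theta''(a)^2+\theta''(b)^2>0$, so the correction is strictly negative for every small $\epsilon>0$. Hence $\delta^2\mathscr E_\theta[\tau_\epsilon]<0$ for small $\epsilon$ whenever $\beta\le0$, and \re{rephraseprop2} yields that $\theta$ is not minimal on $\mathcal C^N([a,b])$. One can sidestep computing $c$ exactly by instead choosing the end interpolants so that $\int(\tau_\epsilon')^2=o(\epsilon)$ on $[a,a+\epsilon]$ and $[b-\epsilon,b]$, which makes the surviving coefficient simply $-\big(\theta''(a)^2+\theta''(b)^2\big)$; the qualitative conclusion is unchanged.
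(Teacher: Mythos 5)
Your proposal is correct and takes essentially the same route as the paper's own proof: the paper likewise tests with $\tau=\theta'$ capped by quadratic interpolants on end intervals (of lengths $\epsilon$ and $\nu$), integrates by parts using $\mathcal S\theta'=0$, and obtains $\delta^2\mathscr E_\theta[\tau]=\xi(a)-\xi(b)-\tfrac{2}{3}\epsilon\left(\left.\sd{V}{\theta}\right|_{\theta(a)}\right)^2-\tfrac{2}{3}\nu\left(\left.\sd{V}{\theta}\right|_{\theta(b)}\right)^2+O(\epsilon^2)+O(\nu^2)$, which matches your expansion with $c=\tfrac23$ since $\theta''=-\left.\sd{V}{\theta}\right|_{\theta}$. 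The only differences are cosmetic: the paper uses independent end parameters $\epsilon,\nu$ where you use a single $\epsilon$, and your closing remark about an alternative interpolant with $o(\epsilon)$ gradient cost is a valid optional shortcut the paper does not take.
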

\begin{proof}
We build a perturbation $\tau$ for which $\xi(a)\leq \xi(b)$ implies that\re{secvar2} is negative. Let $\nu$ and $\epsilon$ be strictly positive numbers such that $\epsilon<b-a$ and $\nu< b-a-\epsilon$. Define the polynomials 
\ben{}
g(s) &=& \theta'(a+ \epsilon) + \theta''(a+ \epsilon) \big(s- (a+\epsilon)\big) + \frac{\theta''(a+ \epsilon)}{2 \epsilon} \big(s- (a+\epsilon)\big)^2,\nonumber\\
h(s) &=& \theta'(b-\nu) + \theta''(b-\nu) \big(s- (b-\nu)\big) - \frac{\theta''(b-\nu)}{2 \nu} \big(s- (b-\nu)\big)^2.\nonumber
\een
Then consider the perturbation
\be{xiperturb}
\tau(s)= \left \{
\begin{split}
&g(s) \qquad \, \textrm{if  } s\in [a,a+\epsilon),\\
& \theta'(s) \qquad \textrm{if } s\in [a+ \epsilon,b-\nu],\\
&h(s)\qquad \textrm{if } s\in (b-\nu,b].
\end{split}
\right .
\ee
Note that by construction, $\tau\in \mathcal C^N([a,b])$. 
Substituting\re{xiperturb} in\re{secvar2} leads to 
\ben{}
\delta^2 \mathscr E_\theta [\tau]&=&\int_a^{a+\epsilon}\left (  
 \theta''(a+\epsilon) + \frac{\theta''(a+ \epsilon)}{ \epsilon} \big(s- (a+\epsilon)\big) 
 \right )^2 +f(s) 
 \left( g(s) \right)^2~\textrm{d} s\nonumber\\
 &&+ \int_{a+\epsilon}^{b-\nu} \left (\theta''(s)\right)^2+ f(s) \left(\theta'(s)\right)^2 \textrm{d} s \nonumber \\
&& + \int_{b-\nu}^b \left( \theta''(b-\nu) - \frac{\theta''(b-\nu)}{ \nu} \big(s- (b-\nu)\big) 
 \right )^2 +f(s) 
 \left( h(s) \right)^2~ \textrm{d} s. \nonumber
\een
After changing variable in the first and third lines according to $\epsilon z = s - (a+\epsilon)$ and $\nu t = s- (b-\nu)$ respectively, and integrating the first term of the second line by part, we obtain
\ben{}
&& \delta^2 \mathscr E_\theta [\tau] \nonumber\\
&=& \epsilon\, \left(\theta''(a+\epsilon)\right)^2 \int_{-1}^{0} \left (  
1+z\right)^2\textrm{d}z \nonumber\\
&&
+ \epsilon\,  \int_{-1}^{0} \left (f(a)  
+ O(\epsilon) \right) 
 \left( \theta'(a+\epsilon) + \theta''(a+\epsilon) \epsilon\,  \left(  z+ \frac{z^2}{2} \right)  \right)^2
\textrm{d} z
 \nonumber\\
&&+  \theta''(b-\nu) \theta'(b-\nu) - \theta''(a+ \epsilon) \theta'(a+ \epsilon) +\int_{a+\epsilon}^{b-\nu}\theta'(s) \overbrace{ \left (\theta'''(s) + f(s) \theta'(s)\right)}^0 \textrm{d} s \nonumber \\
&& +\nu\, \left (  \theta''(b-\nu) \right)^2 \int_{0}^1 \left( 1-t \right )^2 \, \textrm{d} t\nonumber\\
 && +\nu\, \int_0^1 \left(f(b) + O(\nu) \right) 
 \left( 
 \theta'(b-\nu) + \nu \theta''(b-\nu) \left( t - \frac{t^2}{2}\right) 
 \right)^2   \textrm{d} t\nonumber\\
 &&= 
\epsilon \left(\frac{\left(\theta''(a)\right)^2 }{3} - \left .  \sdd{V}{\theta}\right|_{\theta(a)} \left(\theta'(a)\right)^2 \right ) + O(\epsilon^2) \nonumber \\
&&\qquad + \theta''(b) \theta'(b) - \nu \left( \theta'''(b) \theta'(b) + \left(\theta''(b) \right)^2\right) + O(\nu^2) \nonumber\\
&&\qquad\qquad - \theta''(a) \theta'(a) - \epsilon \left( \theta'''(a) \theta'(a) + \left(\theta''(a)\right)^2\right) +O(\epsilon^2) \nonumber \\
&&\qquad + \nu \, \left ( \frac{\left(\theta''(b)\right)^2 }{3} -\left .  \sdd{V}{\theta}\right|_{\theta(b)} \left(\theta'(b) \right)^2 \right) + O(\nu^2)\nonumber\\
&&= \overbrace{\left . \sd{V}{\theta}\right|_{\theta(a)}  \theta'(a)}^{\xi(a)} - \overbrace{ \left . \sd{V}{\theta}\right|_{\theta(b)}  \theta'(b) }^{\xi(b)} -\epsilon ~\frac{2}{3} \left ( \left . \sd{V}{\theta}\right|_{\theta(a)} \right)^2 - \nu  ~ \frac{2}{3} \left ( \left . \sd{V}{\theta}\right|_{\theta(b)} \right)^2\nonumber\\
&&
\qquad\qquad\qquad\qquad\qquad\qquad \qquad\qquad\qquad\qquad+O(\epsilon^2)+ O(\nu^2). \label{secvaxi}
\een
Accordingly if $\xi(a)-\xi(b)<0$, then for $\epsilon$ and $\nu$ sufficiently small, $\delta^2 \mathscr E_\theta$  
is negative for the pertubation\re{xiperturb}. If $\xi(a)=\xi(b)$, the second variation is dominated by the linear terms in\re{secvaxi} which is negative so that once again, the second variation applied on the perturbation\re{xiperturb} is negative for sufficiently small $\epsilon$ and $\nu$. \qed
\end{proof}

\section{Application\label{sec-ex}}

As an example, we study the case of a planar weightless inextensible and unshearable rod of length $L$. The rod is pinned at one end and a weight of mass $m$ is attached at the other end (see Fig.~\ref{fig-a}). The rod is uniformly curved in its reference state with a reference curvature $\widehat{u}>0$. Depending on the parameters, this system supports multiple equilibrium solutions and we can apply our general results to determine their stability.  We  show that the stability of most (but not all) equilibria can be decided by Theorem~\ref{theo-IndJ}. For this particular example, Theorem~\ref{theo-xi} was sufficient to prove that all cases where $J[\gamma]=0$ are actually unstable.

The potential energy $\mathcal  E$ of the system is
\be{EnEx}
\mathcal  E= \int_0^L \frac{EI}{2}  \left(\sd{\theta}{s}-\widehat{u}\right)^2\textrm{d}s + mg \int_0^L \cos \theta(s)~ \textrm{d}s,
\ee
where the arc length $s$ is used to parameterise the rod ($s=0$ at the frame and $s=L$ at the massive point), $\theta$ is the angle between the tangent to the rod (towards increasing $s$) and the upward  vertical. The first term in\re{EnEx} is the elastic energy of the rod where we have assumed linear constitutive laws for the moments, $(EI)$ is the bending stiffness of the rod classically estimated as the product of the Young's modulus $E$ by the second moment of area $ I$ of the rod's section. The second term accounts for the potential energy of the massive point where $g$ is the acceleration of gravity. 

We non-dimensionalise the problem according to
\begin{equation}
x=\frac{s}{L},\quad v=\frac{\widehat{u}^2 (EI)}{2mg},\quad M=\frac{m g L^2}{EI},\quad \mathcal E= \mathscr E \frac{EI}{L}.
\end{equation}
In its reference (unstressed) state, the rod is a multi-covered ring with curvature $\sqrt{2 M v}/L$ and  $n_{\textrm{loop}}=\sqrt{2M v}/(2\pi)$ full loops. In these new variables, Eq.\re{EnEx} becomes 
\be{EnExND}
\mathscr  E= \int_0^1\frac{ \left(\theta'-\sqrt{2 M v}\right)^2}{2} + M \cos \theta \textrm{d}x,
\ee
where  $(\ )'=\sd{(\ )}{x}$. We note that\re{EnExND} has the form\ret{myfunc}{FormL} with
\be{defVappli} 
V=-M \cos\theta.
\ee
The system depends on two parameters: $v$ measuring the reference curvature of the rod in units of $\sqrt{ 2 mg/(EI)}$ and $M$ measuring the mass of the attached weight in units of $\frac{EI}{g L^2}$. Also note that increasing $L$ increases $M$ but leaves $v$ unchanged.

\subsection{Classifying the equilibria\label{sec-class}}

The equilibria of the system are the solutions to the Euler-Lagrange problem for\re{EnExND}:
\be{ELwS}
\theta'' + M \sin \theta=0,\qquad \theta'(0)=\theta'(1)=\sqrt{2 M v}. 
\ee
The potential energy of the system is  given by\re{EnExND}, but the following pseudo-energy is conserved along the length of the rod: 
\be{pE}
\frac{(\theta')^2}{2} + V(\theta) = C.
\ee
This pseudo-energy is a convenient quantity to classify the solutions from the point of view of the `point-mass-in-a-potential' analogy.
Since initial value problems have unique solutions, different solutions of\re{ELwS} must have different initial point $\theta(0)$ and their respective pseudo-energies 
\be{pEinit}
C=M v- M \cos \theta(0)
\ee
 must therefore be different -- up to a global rotation of the whole rod by $2 \pi$. 
 
 Further, since the potential $V$ is a multiple of $M$, we can express all energies in units of $M$: $C= e M$. For fixed $M$ and $v$, the solutions of\re{ELwS} can therefore be conveniently labelled by their non-dimensional pseudo-energy $e$. The rescaled version of\re{pEinit} reads
 \be{pEnND}
 e=v - \cos \theta(0).
 \ee
Consequently, for a given reference curvature $v$, the pseudo-energy of all solutions is constrained: $e\in[v-1,v+1]$.  From\re{pE} and the boundary condition at $x=1$ in\re{ELwS}, all solutions must start and end at the same height: $V(\theta(0))=V(\theta(1))$. Therefore, we can classify the solutions of\re{ELwS} according to five  categories
\begin{figure}
  \centering
  {\includegraphics[width=\linewidth]{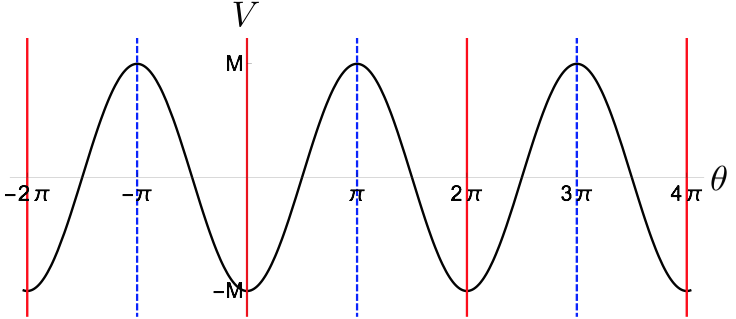}
  }
  \caption{The potential function $V=-M \cos \theta$. The full vertical lines (red online) show min-boundaries and the dashed vertical lines (blue online) show max-boundaries.\label{fig-pot}}
\end{figure}
\begin{itemize}
\item[(a)] Solutions that go over one and only one maximum of $V$ and do not cross any minima.
\item[(b)] Solutions that stay within one well of $V$ and oscillate therein: $V(\theta(x))< M~~\forall x\in[0,1]$. 
\item[(c)] Solutions that cross $k$ minima and $k$ maxima of $V$ with $k\in\mathbb N_0$. 
\item[(d)] Solutions that cross $k$ minima and $k+1$ maxima of $V$ with $k\in\mathbb N_0$. 
\item[(e)] Solutions that cross $k+1$ minima and $k$ maxima of $V$ with $k\in\mathbb N_0$. 
\end{itemize}
Example of equilibria in these five categories are shown in Figs.~\ref{fig-a}-\ref{fig-e}.

By direct application of Theorem~\ref{theo-IndJ}, all equilibria of the categories $(a)$ and $(d)$ are stable and all equilibria of categories $(b)$ and $(e)$ are unstable. Indeed, for category $(a)$ an example of trajectory $\gamma$ in the phase space is shown in bold (red online) in Fig.~\ref{fig-a}$(c)$.  The only boundary crossed by the trajectory $\gamma$ is the max-boundary at $\theta=\pi$, hence $J[\gamma]=-1$ and Theorem~\ref{theo-IndJ} implies that the solution is stable. 

Solutions in category $(b)$ cross (sometimes multiple times) the min-boundary at $\theta \textrm{ mod } 2\pi=0$, $J[\gamma]>0$ and Theorem~\ref{theo-IndJ} implies that these equilibria are unstable. 

Equilibria in category $(d)$ alternatively cross max- and min-boundaries of $V$ so that they first and last cross max-boundaries. For such solutions $J[\gamma] = (k) - (k+1)= -1$ and from Theorem~\ref{theo-IndJ} we conclude that any such equilibrium is stable. 

Equilibria in category $(e)$ alternatively cross min- and max-boundaries of $V$ so that they first and last cross min-boundaries. For such solutions $J[\gamma] = (k+1) - (k)= 1$ and from Theorem~\ref{theo-IndJ} we conclude that any such equilibrium is unstable. 

Finally, equilibria in  category $(c)$ provide examples for which Theorem~\ref{theo-IndJ} is inconclusive since $J[\gamma]=k -k=0$. However because of the periodicity of $V$, and the fact that each equilibria must respect $V(\theta(a))=V(\theta(b))$, we have $\xi(a)=\theta'(a) \left.\sd{V}{\theta}\right|_{\theta(a)} = A\left.\sd{V}{\theta}\right|_{\theta(a)}= A\left.\sd{V}{\theta}\right|_{\theta(b)}= \theta'(b) \left.\sd{V}{\theta}\right|_{\theta(b)} =\xi(b)$ and Theorem~\ref{theo-xi} implies that all such equilibria are unstable. 

Gathering these results, for this particular system, a stationary solution is stable whenever $\theta(0) \mod 2\pi \in(0,\pi)$ and $\theta(L)\mod 2 \pi \in(\pi, 2\pi)$. 

\subsection{Detailed analysis of the equilibria\label{sec-det}}

\paragraph{Category $(a)$} corresponds to solutions which come down from the frame arching in the same direction (but not by the same amount) as the reference curvature of the rod and without looping. When there is a solution in this category, it is unique modulo $2 \pi$. We note that for such a solution to exist,  it must be possible to fit the whole length of the rod between two minima of $V$ (at 0 and $ 2 \pi$ in Fig.~\ref{fig-a}). In other words, the length of rod $L_{\textrm{open}}$ that would be required to go from one minimum to the next must be greater than 1: 
\be{Lopen}
1<L_{\textrm{open}}= \int_0^{L_{\textrm{open}}} \textrm{d} x = \int_{0}^{2\pi} \frac{1}{\theta'} \textrm{d} \theta
 = \frac{1}{\sqrt{2M}}  \int_{-\pi}^\pi \frac{1}{\sqrt{v-1+ \cos \theta}} \textrm{d} \theta =  \frac{4\, K(2/v)}{\sqrt{2M v}} ,
 \ee
where, we used\re{pEinit} to express $\theta'$ and where $K(m)=\int_0^{\pi/2} \frac{\textrm{d}\phi}{\sqrt{1-m\sin^2 \phi}}$ is the complete elliptic integral of the first kind. For a given $M$, Eq.\re{Lopen} can be inverted to obtain the maximal $v$ for which an open solution exists. 

\begin{figure}
  \centering
  {\includegraphics[width=\linewidth]{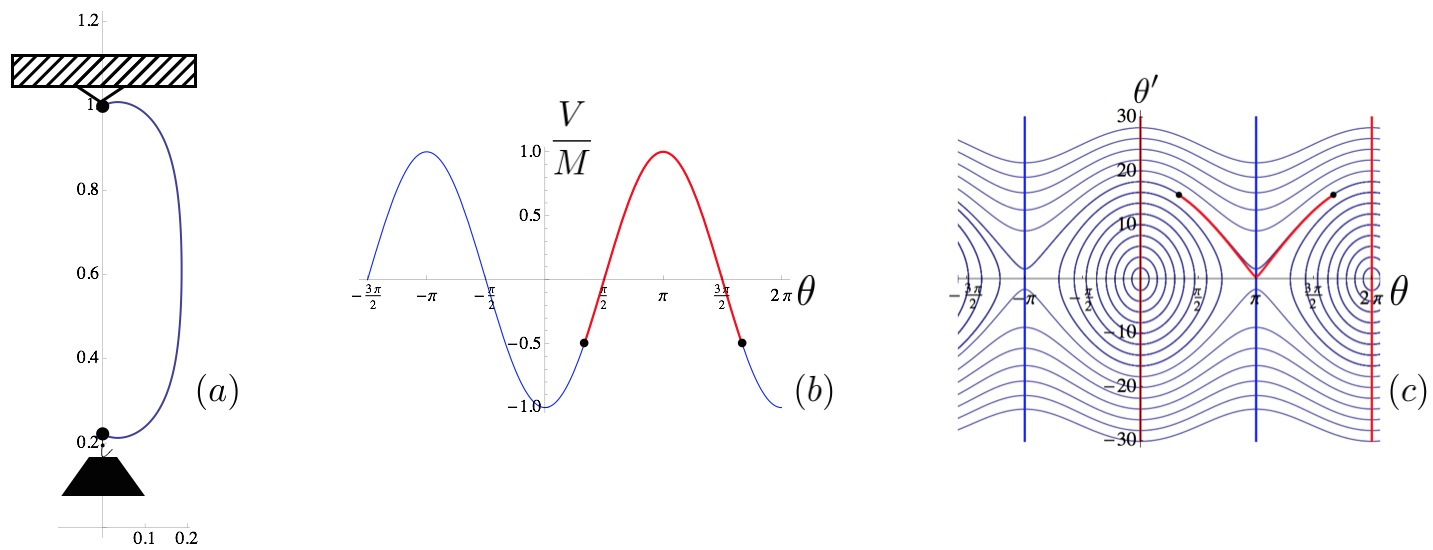}
  }
  \caption{The equilibrium of category $(a)$ in the case $M=81$ and $v=1.5$. Substituting Eq.\re{sola} in the expression\re{EnExND} of the total energy of the system, the total energy of this solution is $\mathscr E_a\simeq11.22$. $(a)$: The physical realisation of the solution. $(b)$: Shape of the potential $V$ normalised by $M$ (thin and blue online) as well as the part of it that is covered by the solution (thick and red online) which starts and ends at the black dots.  $(c)$: Phase space of the problem in general (thin and blue online) and of this particular solution (thick and red online). The vertical lines (red online) at $2 z \pi$  and the vertical lines (blue online) at $\pi+ 2 z\pi$  with $z \in \mathbb Z$ are respectively the min- and max- boundaries $\Gamma_m$ and $\Gamma_M$. This equilibrium is \textbf{stable}.
  \label{fig-a}}
\end{figure}

For a fixed choice of $M$ and $v$ and providing $L_{\textrm{open}}>1$, the pseudo-energy $e_a$ of the solution $(a)$ can be computed by inverting
\ben{}
1= \int_0^1 \textrm{d} x &=& \frac{1}{\sqrt{2M} }\int_{\arccos (v-e_a)}^{2 \pi -\arccos(v-e_a)} \frac{\textrm{d} \theta }{\sqrt{e_a+\cos\theta}} \nonumber\\
&=&\frac{4}{\sqrt{2M (1+e_a)} }  \int_{\frac{\arccos (v-e_a)}{2}}^{\frac{\pi}{2}} \frac{\textrm{d}\phi }{\sqrt{1- \frac{2}{1+e_a} \sin^2\phi}} \nonumber \\
&=&
\frac{2}{\sqrt{M}}~~ \ell_a(e_a |v), \label{finda}
\een
where we used\re{pEnND} to express $\theta(0)$ in the second equality together with the fact that the solution is symmetric about $\theta=\pi$ and where $F(\phi|m)=\int_0^\phi \frac{\textrm{d}\phi}{\sqrt{1-m\sin^2 \phi}}$ is the elliptic integral of the first kind and where
\be{Defella}
\ell_a(e |v)=
 \sqrt{\frac{2}{1+e} } \left(K \left(\frac{2}{1+e}\right) - F\left (\frac{\arccos(v-e)}{2}\Big | \frac{2}{1+e} \right) \right).
 \ee
The function $\ell_a$ together with similar functions for the other categories prove very useful to find all equlibria for a given $M$ and $v$. 

For a given root of Eq.\re{finda}, a similar argument can be used to compute the solution $(a)$ as
\be{sola}
\theta(s)=2\, \textrm{am} \Bigg( K\left(\frac{2}{1+e_a}\right)+  \frac{\sqrt{2M(1+e_a)}} {2} ({s-1/2} )  ~\Bigg |~ \frac{2}{1+e_a}    \Bigg),
\ee
where $\textrm{am}(u|m)$ is the Jacobi amplitude of the elliptic integral of the first kind: the inverse of the function $F(\phi|m)$. All these solutions are \textbf{stable}.

\begin{figure}[]
  \centering
  {
  \includegraphics[width=\linewidth]{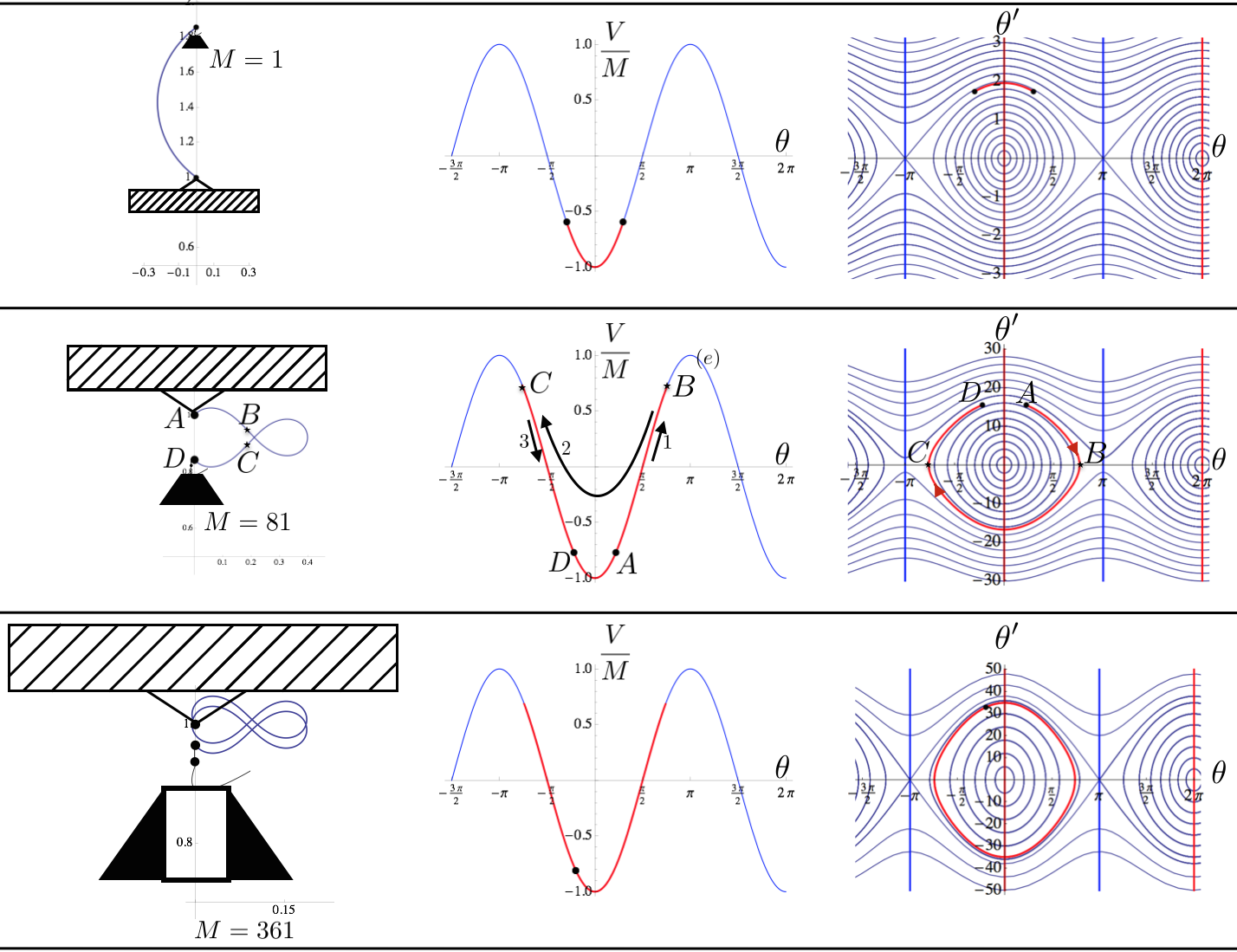}
   }
  \caption{Examples of equilibria in category $(b)$ with $v=1.5$ and $M=1$ (top), $M=81$ (middle) and $M=361$ (bottom). At $M=1$, only a single simple oscillation exists in category $(b)$. At $M=81$ a single complex oscillation exists together with a full cycle ($k=1$). For $M=361$, we  show the case of two full cycles in the figure ($k=2$) but there also exists a solution with $k=1$. Since the middle example has the same parameters as the case showed in Fig.~\ref{fig-a}, it is interesting to compare the total energy in the system for the two equilibria. We find: $\mathscr E_b\simeq176.96$. These equilibria are \textbf{unstable}.
   \label{fig-b}}
\end{figure}
\paragraph{Category $(b)$} contains a variety of solutions that can be further sub-divided into different oscillatory modes. We will dispense from a detailed study of all possibilities as we proved that all these equilibria are \textbf{unstable}.  It can be divided in two categories depending on whether there is a single or multiple oscillations in the potential well. 

In the case of a single oscillation there is a further sub-division according to whether the sign of $\theta(0)$ matches the sign of $\widehat u$ or not. If not (see Fig.~\ref{fig-b}(a)), the pseudo-energy is given by the requirement that the length 
\ben{}
L_{\textrm{simple swing}} &=& 2 \int_0^{\theta(1)} \frac{\textrm{d}\theta}{\theta'}= \frac 2 {\sqrt{M}} \sqrt{\frac 2 {1+e}} \int_0^{\frac{\arccos (v-e)}2} \frac{\textrm{d} \phi }{\sqrt{1-\frac  2 {1+e} \sin^2 \phi}}\nonumber\\
&=&\frac 2 {\sqrt{M}} \sqrt{\frac 2 {1+e}} F\left (\frac{\arccos (v-e)}{2} \big | \frac 2 {1+e} \right ), \label{LengthSimpleSwing}
\een
must be equal to 1. Let us define the function $\ell_b$ for future use and similarly to $\ell_a$ in\re{Defella}:
\be{}
\ell_b(e|v) = \sqrt{\frac 2 {1+e}} F\left (\frac{\arccos (v-e)}{2} \big | \frac 2 {1+e} \right ).
\ee

If the signs of $\theta(0)$ and $U$ match, a somewhat more complex equilibrium occurs (see Fig.~\ref{fig-b} (Middle)). Its length can be computed as 
\be{}
L_{\textrm{complex swing}}=\frac 2 {\sqrt{M}} \big( 2 \ell_b(e|0) - \ell_b(e|v) \big),
\ee
and the corresponding $e$ is found by requiring that $L_{\textrm{complex swing}}=1$.

For multiple oscillations to occur it must be possible to have solutions with pseudo-energy $e<1$ (otherwise the solution leaves the well of potential $V$). This gives a boundary on the reference curvature for which this can happen indeed $v-1<e<1$ implies $v<2$.  Let us simply note that there may be multiple (and in fact many) solutions of this type. To prove this we can simply compute the length of rod required to do a half oscillation from $-\theta_e=-\arccos[-e]$ to $\theta_e$ (where $\theta_e$ is defined as the angle at which $\theta'=0$ for that particular value of pseudo-energy):
\be{Lswing}
L_{\textrm{swing}}(e)= \int_{-\theta_e}^{\theta_e} \frac{\textrm{d}\theta}{\theta'} = \frac{4}{\sqrt{2 M} }\int_0^{\theta_e/2} \frac{\textrm{d}\phi}{\sqrt{e+\cos 2\phi}}=\frac{2}{\sqrt{ M}}~\ell_b(e|0).
\ee
If there exists a value of $e$ such that $L_{\textrm{swing}}(e)= 1/(2 k)$ with $k\in\mathbb N_0$, there exists a solution starting at\footnote{That is assuming that $U>0$, if $U<0$ the solution starts at $\arccos[v-e]$.} $\theta(0)=-\arccos[v-e]$, oscillating $k$ times and ending with $\theta(1)=\theta(0)$. We observe that $L_{\textrm{swing}}(e)$ is a monotonically increasing function of $e$ and that $\lim_{e\to 1} L_{\textrm{swing}}(e)=\infty$ so the maximum number of oscillations for a given $M$ and $v$ is given by $\max \Big[k\in \mathbb N_0: k< \frac 1{2 L_{\textrm{swing}}(v-1)} \Big]$. Furthermore, it is easy to show that $\lim_{e\to -1} L_{\textrm{swing}}(e) =\frac{\pi}{\sqrt{M}}$ so that the maximum number of oscillations for a given $M$  happens when $v=0$ (so that $e=-1$ can be asymptotically reached) and is given by $\max \Big[ k\in \mathbb N_0: k < \frac{\sqrt{M}}{2 \pi} \Big]$. Finally, because of the monotonicity of $L_{\textrm{swing}}(e)$, if there exist a solution with $k$ oscillations, there exist solutions with $j$ oscillations for all $j: 1\leq j\leq k$. 

To summarise, in this category, there exist
\begin{itemize}
\item an equilibrium with a single simple swing if $\ell_b(e|v) = \sqrt{M}/2$ has a solution in $e\in[v-1,v+1]$,
\item an equilibrium with a single complex swing if $2 \ell_b(e|0) - \ell_b(e|v) = \sqrt{M}/2$ has a solution in $e\in[v-1,v+1]$,
\item equilibria performing $k$ oscillations for all $k$ for which $2 k \,\ell_b(e|0)=\sqrt{M}/2$ has a solution in $e \in[v-1,1]$.
\end{itemize}

\begin{figure}[]
  \centering
  {
  \includegraphics[width=\linewidth]{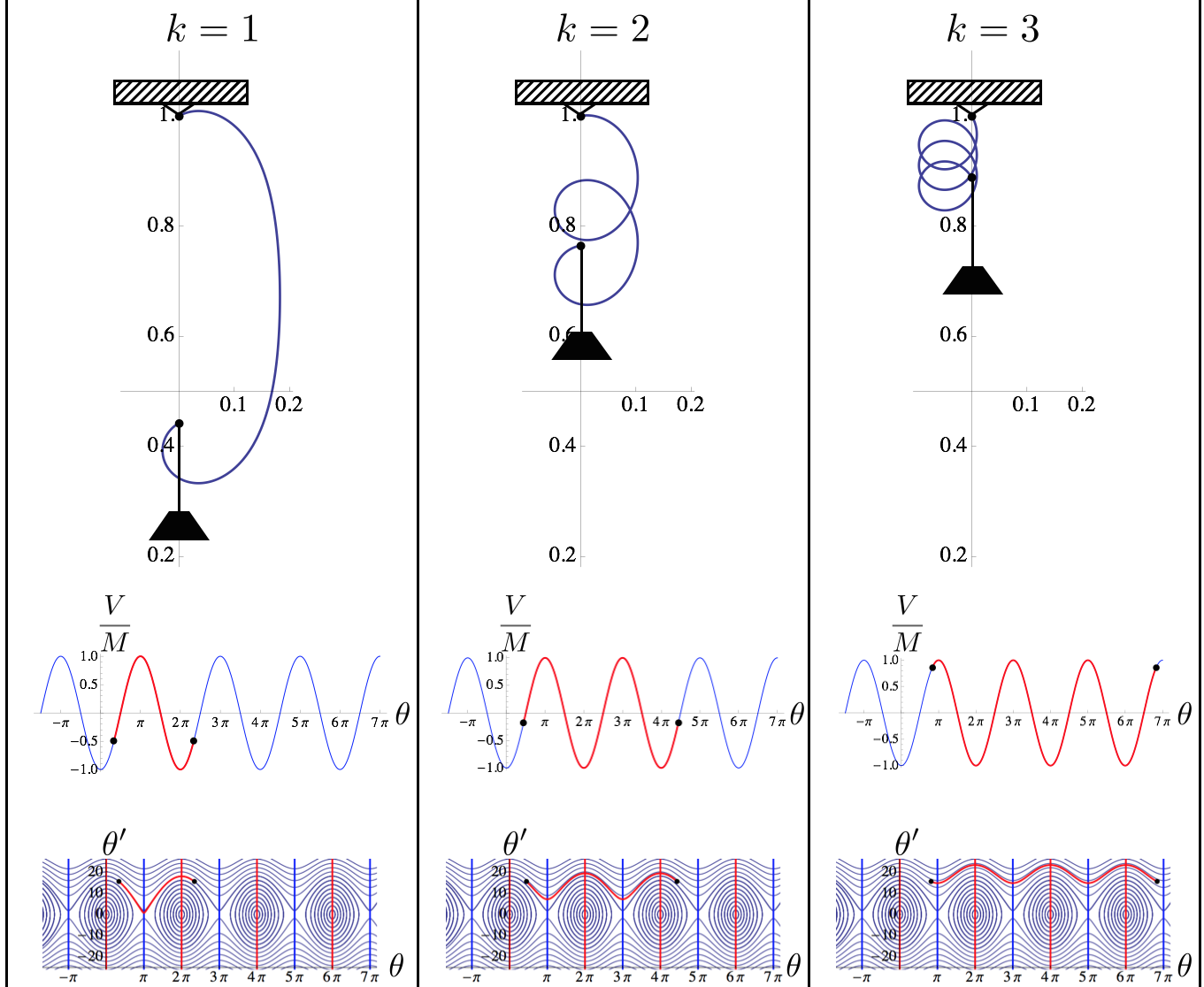}
   }
  \caption{Examples of equilibria in category $(c)$ with $v=1.5$ and $M=81$ for which there are three solutions to Eq.\re{findc}: one for each $k=1,2,3$. The energy of the system is $\mathscr E_{c1}\simeq14.60$, $\mathscr E_{c2}\simeq-5.37$, and $\mathscr E_{c3}\simeq0.74$, respectively. These equilibria are \textbf{unstable}.
   \label{fig-c}}
\end{figure}

\paragraph{ Category $(c)$} is defined by solutions with exactly $k$ loops between the frame and the weight. Their existence depends only on  the length $L_{\textrm{Loop}}$ required for the solution to loop (go from any $\theta^\star$ to $\theta^\star+ 2 \pi$) for given values of the mass $M$ and the pseudo-energy of the solution $e$. 
This length can be easily computed as 
\ben{}
L_{\textrm{loop}}(e)&=& \int_0^{L_{\textrm{loop}}} \textrm{d} x = \int_{-\pi}^\pi \frac{1}{\theta'} \textrm{d} \theta
 = \frac{1}{\sqrt{2M}}  \int_{-\pi}^\pi \frac{1}{\sqrt{e+ \cos \theta}} \textrm{d} \theta \nonumber\\
 &=& \frac{2}{\sqrt{M}} \sqrt{\frac{2}{1+e}}  K\left ( \frac{2}{1+e}\right). \label{Lloop}
\een

There exists a solution in category $c$ for each value of $k \in\mathbb N_0$ for which $k\, L_{\textrm{loop}}(e) =1$ has a solution $e\in[v-1,v+1]$.  We therefore define
\be{Defellc}
\ell_c (e|k) = k~ \sqrt{\frac{2}{1+e}}  ~K\left ( \frac{2}{1+e}\right),
\ee
and there is a solution in category $(c)$ for each $k$ such that 
\be{findc}
\ell_c (e|k) = \frac{\sqrt{M}}{2},
\ee
has a solution $e\in[v-1,v+1]$.  All these solutions are \textbf{unstable}.

\begin{figure}[]
  \centering
  {
  \includegraphics[width=\linewidth]{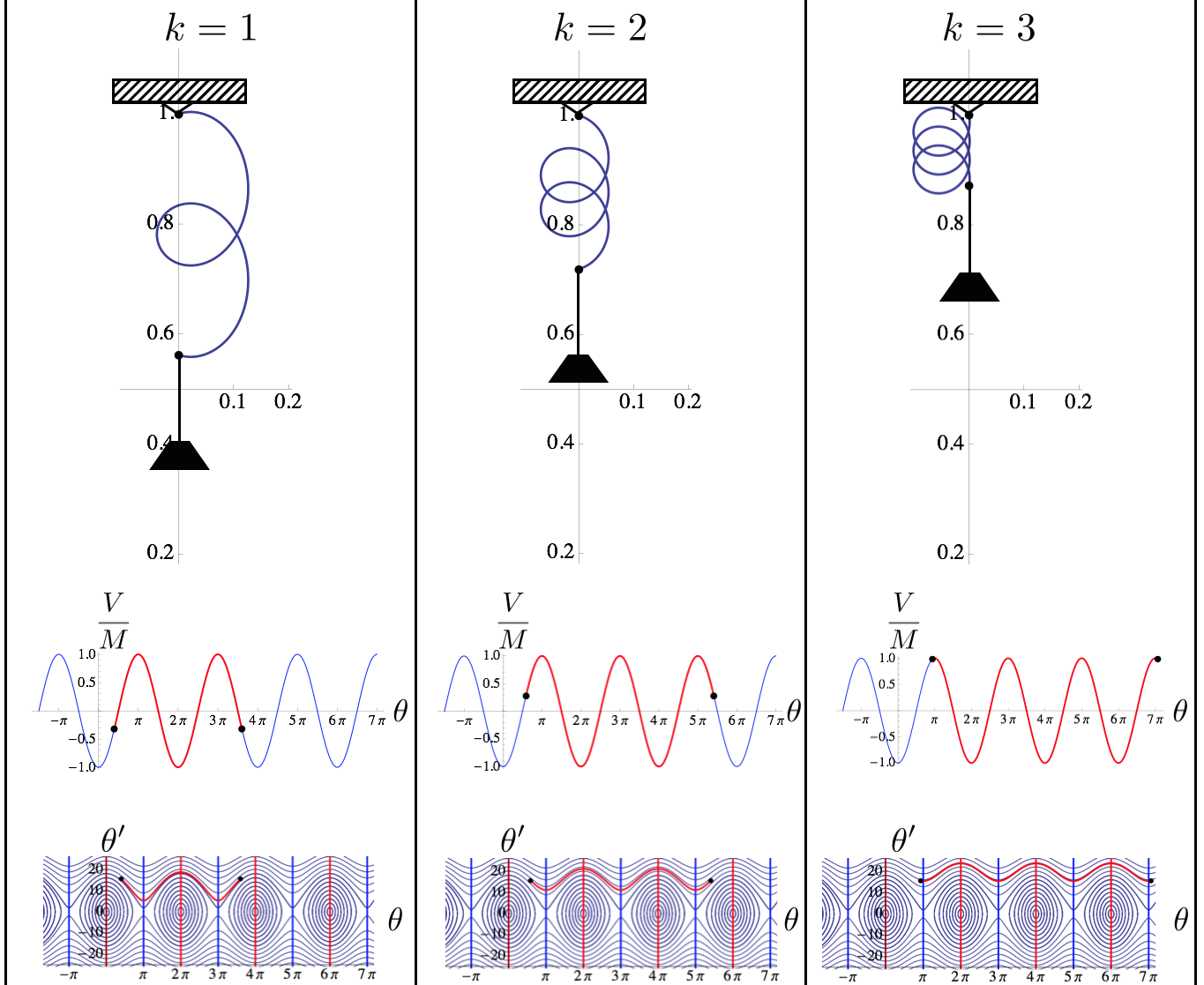}
   }
  \caption{Examples of equilibria in category $(d)$ with $v=1.5$ and $M=81$ for which there are three solutions to Eq.\re{findd}: one for each $k=1,2,3$. The energy of the system is $\mathscr E_{d1}\simeq-11.35$, $\mathscr E_{d2}\simeq-16.74 $, and $\mathscr E_{d3}\simeq0.58$, respectively. These equilibria are \textbf{stable}.  
   \label{fig-d}}
\end{figure}

\paragraph{ Category $(d)$} gathers solutions that loop $k$ times between the frame and the weight and arc over the next maximum. Their existence depends on the length $L_{\textrm{Loop}}$ computed in\re{Lloop} and on the length $L_{\textrm{arch}} = \frac 2 {\sqrt{M}} \ell_a(e|v)$ required to cover the extra arch from $2 k \pi+\theta(0)$ to $(2k+1) \pi - \theta(0)$ (assuming that $\theta(0)\in[0, \pi]$). 

The pseudo-energy of a solution with $k$ loops is then specified by the condition $L_{\textrm{arch}} + k L_{\textrm{loop}} =1$ which is equivalent to $\ell_d (e|v,k) = \frac{\sqrt{M}} 2$ where the function $\ell_d$ is defined by
\be{findd}
\ell_d(e|v,k)= \sqrt{\frac 2 {1+e}}  \bigg[  K\left (  \frac 2 {1+e}  \right ) (1+k) - F\left ( \frac{\arccos (v-e)}{2} \big |  \frac 2 {1+e} \right ) \bigg],
\ee
and there is a solution in category $(d)$ for each $k$ such that 
\be{findd}
\ell_d (e|v,k) = \frac{\sqrt{M}}{2},
\ee
has a solution $e\in[v-1,v+1]$. All these solutions are \textbf{stable}.

\begin{figure}[]
  \centering
  {
  \includegraphics[width=\linewidth]{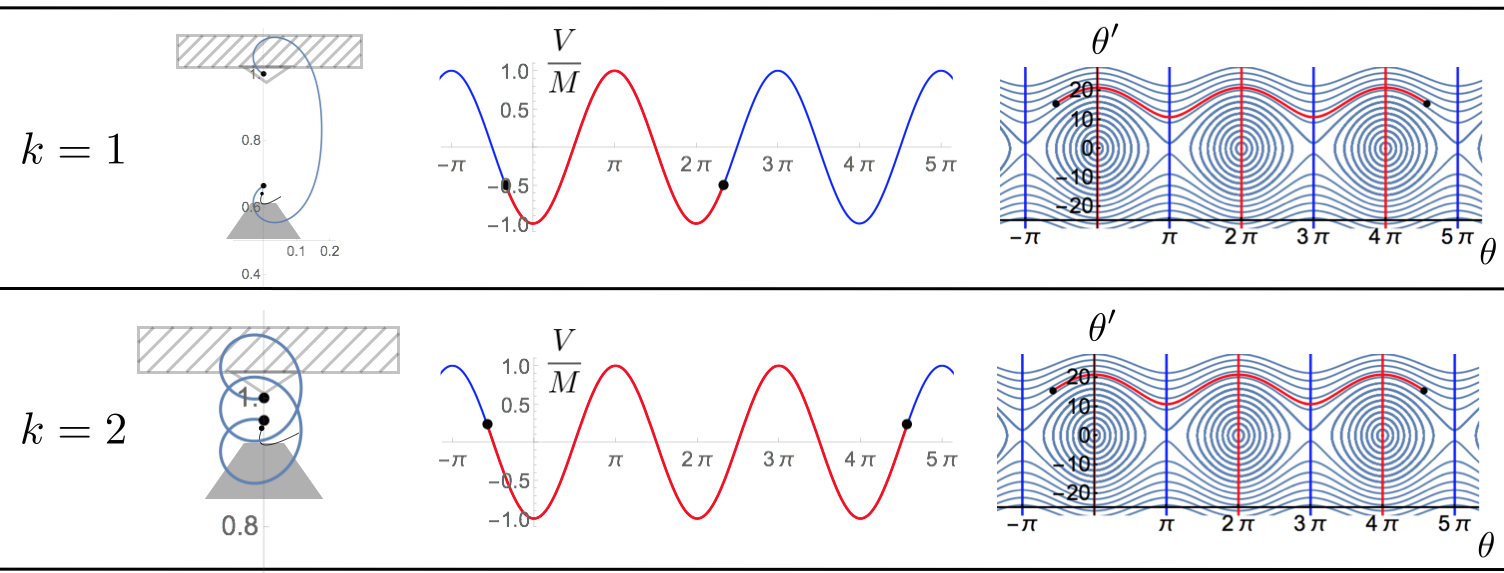}
   }
  \caption{Examples of equilibria in  category $(e)$ with $v=1.5$ and $M=81$ for which there are two solutions to Eq.\re{finde}: one for each $k=1,2$. The energy of the system is respectively: $\mathscr E_{e1}\simeq 18.01$ and $\mathscr E_{e2}\simeq 3.73 $. These equilibria are \textbf{unstable}.
   \label{fig-e}}
\end{figure}

\paragraph{ Category $(e)$} gathers solutions that loop $k$ times between the frame and the weight and swing across the next minimum. Their existence depends on the length $L_{\textrm{Loop}}$ computed in\re{Lloop} and on the length $L_{\textrm{simple swing}}$ computed in\re{LengthSimpleSwing} and required to cover the extra swing from $2 k \pi+ \theta(0)$ to $2 k \pi -\theta(0)$ (assuming that $\theta(0)\in[-\pi,0]$).

The pseudo-energy of a solution with $k$ loops is then specified by the condition $ L_{\textrm{simple swing}} + k L_{\textrm{loop}} =1$ which is equivalent to $\ell_e (e|v,k) = \frac{\sqrt{M}} 2$ where the function $\ell_e$ is defined by
\be{defelle}
\ell_e(e|v,k)= \sqrt{\frac 2 {1+e}}  \bigg[ k~ K\left (  \frac 2 {1+e}  \right )  + F\left ( \frac{\arccos (v-e)}{2} \big |  \frac 2 {1+e} \right ) \bigg],
\ee
and there is a solution in category $(e)$ for each $k$ such that 
\be{finde}
\ell_e (e|v,k) = \frac{\sqrt{M}}{2},
\ee
has a solution $e\in[v-1,v+1]$. All these solutions are \textbf{unstable}.

\paragraph{All equilibria} can be summarised  in a single figure. Indeed, for each category, it is possible to obtain an equation of the form $\ell(e|\cdots)=\frac{\sqrt{M}}{2}$ for the pseudo-energy of the solution. In Fig.~\ref{fig-gather}, we plot these non-dimensional solution lengths as functions of $e$. For a given mass $M$ and reference curvature of the spring $v$, the possible equilibria are simply determined by the intersection of these curves and the horizontal (thick black) at $\sqrt{M}/2$.

\begin{figure}[]
  \centering
  {
  \includegraphics[width=\linewidth]{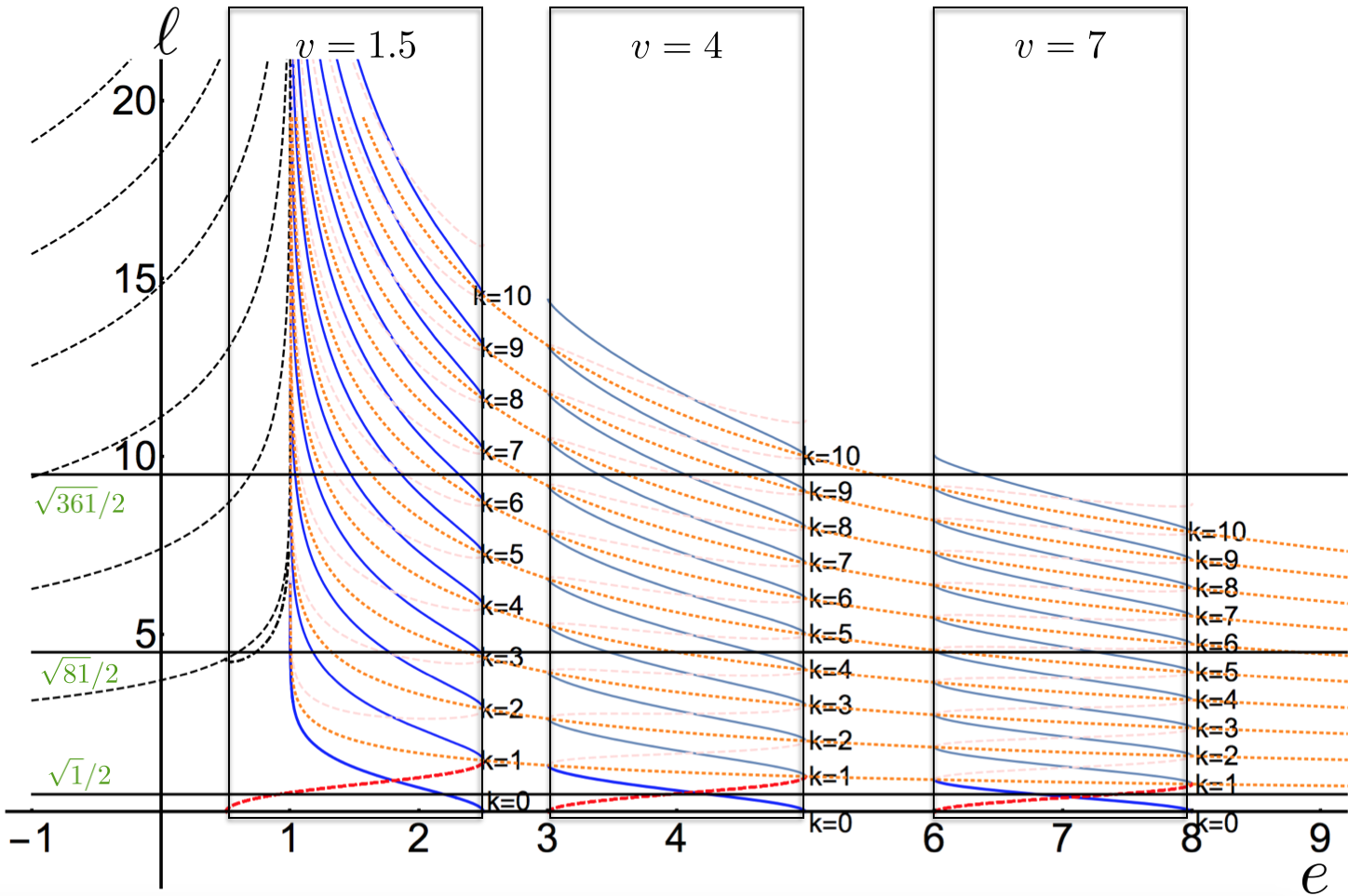}
   }
  \caption{For three different values of $v$, the figure presents the functions $\ell_a(e|v)$ (thick -- blue online), $\ell_b(e|v)$ (thick dashed -- red online), $2 \ell_b(e|0) - \ell_b(e|v)$ (thick dot-dashed -- black online) this function is only defined for $v<2$, $2 k \ell_b(e|0)$ (thin dashed -- black online), $\ell_c(e|k)$ (dotted grey -- orange online), $\ell_d(e|v,k)$ (thin grey -- blue online) and $\ell_e(e|v,k)$ (thin dashed light grey -- light red online). The intersection of each of these functions with the (thick black) horizontal at $\sqrt{M}/2$ determines the pseudo-energy of solutions of category $(a)$, $(b)$ single simple swing, $(b)$ single complex swing, $(b)$ multiple simple swings, $(c)$, $(d)$ and $(e)$ respectively. All discontinuous curves correspond to unstable solutions while the full curves correspond to stable solutions. The number of full loops $k$ reported to the right of each box referred to full and dotted lines of categories $(d)$ and $(c)$. The number of loops for the dashed lines corresponding to category  $(e)$ are $k-1$. 
   \label{fig-gather}}
\end{figure}

This simple system  proves to be quite rich. The study of Secs.~\ref{sec-class}~\&~\ref{sec-det} provides all its (non trivial) equilibria together with their stability.  For the illustrative purpose, we chose $M=81$ and $v=1.5$ for the different figures. In that case, the system has four stable and seven unstable equilibria. A direct computation of their internal energy shows that the case $k=2$ in category $(d)$ displayed in the middle column of Fig.~\ref{fig-d} is the global energy minimiser of the problem.  

The complete analysis is summarised in Fig.~\ref{fig-gather} where each curve correspond to one possible type of equilibrium. The full (resp. discontinuous) curves correspond to stable (resp. unstable) equilibria. For given $M$ and $v$ the pseudo-energies of all possible equilibria are indicated by the intersection of the curves in Fig.~\ref{fig-gather} with the horizontal at $\sqrt{M}/2$ the abscissa of which are in the interval $[v-1,v+1]$. From the figure we see that for  $v<2$ the function $\ell_d(e|k,v)$ has a vertical asymptote at $e=1$. Therefore, for increasing values of $M$ the thick horizontal grey (green online) line has more intersections with higher $k$ values (corresponding to more loops)  while  solutions of lower $k$ exist and are stable. When $v>2$ the number of stable solutions decreases as the asymptote can no longer be reached. For instance with $M=361$ there are 6 stable solutions when $v=1.5$ but only $3$ for $v=4$.

\section{Conclusion} 

In this paper, we obtained  geometric conditions for the positive definiteness of the second variation of a family of one-dimensional functionals.  A typical approach to prove stability for these problems is to consider the associated Sturm-Liouville problem and study numerically its spectrum. Such numerical studies can be delicate due to the sensitivity of the eigenvalues when a solution crosses a maximum of $V$ with $|\theta'|\ll1$. We presented a different approach by defining global indices based on the geometry of trajectories in phase plane. In many cases, these indices provide a complete solution to the stability problem.  Theorems~\ref{theo-IndI},~\ref{theo-IndJ}, and~\ref{theo-xi} constitute the main results. Taken together, they offer a powerful method to tackle many difficult stability issues without the need for numerical analysis, as shown in a physical example of a weighted hanging rod with intrinsic curvature.
We chose a simple but generic form for the functional as a starting point, but we expect that many of the arguments presented here could be generalised to other problems.\\

\noindent{\bf{Acknowledgments}}\\
We wish to thank John Maddocks and Apala Majumdar for fruitful discussions.

\appendix 

\section{Minimality with respect to perturbations in $\mathcal C^N[a,b])$ and minimality with respect to perturbation in $C^1([a,b])$\label{app-CnvsC1}} 

In Section~\ref{sec-state} we stated that to find a minimum of the functional\ret{myfunc}{FormL}, we could restrict the study to perturbations in $\mathcal C^N([a,b])$ instead of having to study the larger space of $C^1([a,b])$ perturbations.  In this paper we only consider the following two cases: either\re{secvar} holds or $\exists \tau \in \mathcal C^N([a,b]): \delta^2\mathscr E_\theta[\tau] <0$. Then the statement holds because of the following three propositions. 

\begin{proposition} 
For a given function $\theta\in C^1([a,b])$, the first variation of the functional\ret{myfunc}{FormL} vanishes $\forall \tau\in C^N([a,b])$ iff it vanishes $\forall \tau \in C^1([a,b])$. 
\end{proposition}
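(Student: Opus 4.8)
The plan is to prove the two implications separately. One direction is immediate, and essentially all of the content lies in the other: the proof reduces to showing that the vanishing of the first variation on the smaller space $\mathcal C^N([a,b])$ already forces both the Euler--Lagrange equation and the natural boundary conditions, after which the boundary values of an arbitrary perturbation become irrelevant.

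For the trivial direction I would simply invoke the inclusion $\mathcal C^N([a,b])\subset C^1([a,b])$: if $\delta\mathscr E_\theta[\tau]=0$ for every $\tau\in C^1([a,b])$, then in particular this holds for every $\tau\in\mathcal C^N([a,b])$, and nothing more is required.

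For the reverse direction I would start from the explicit first variation associated with the Lagrangian\re{FormL},
\[
\delta\mathscr E_\theta[\tau]=\int_a^b\left(-\left.\sd{V}{\theta}\right|_{\theta(s)}\,\tau(s)+\big(\theta'(s)-A\big)\,\tau'(s)\right)\di s,
\]
and assume it vanishes on all of $\mathcal C^N([a,b])$. First I would test against perturbations with compact support in $(a,b)$; these automatically satisfy $\tau'(a)=\tau'(b)=0$ and hence lie in $\mathcal C^N([a,b])$, so the fundamental lemma of the calculus of variations~\cite{gefo00} yields that $\theta$ solves the Euler--Lagrange equation\re{thetapppot}, and in particular $\theta\in C^2([a,b])$ with $\theta''+\left.\sd{V}{\theta}\right|_{\theta}=0$. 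Having gained this regularity, I would integrate the $\tau'$ term by parts to obtain
\[
\delta\mathscr E_\theta[\tau]=\int_a^b\left(-\theta''-\left.\sd{V}{\theta}\right|_{\theta}\right)\tau\,\di s+\big[(\theta'-A)\tau\big]_a^b=\big(\theta'(b)-A\big)\tau(b)-\big(\theta'(a)-A\big)\tau(a),
\]
the integral vanishing by the previous step. Since $\mathcal C^N([a,b])$ contains perturbations with arbitrary prescribed boundary values $\tau(a),\tau(b)$ (only the derivatives being constrained at the ends), requiring this boundary expression to vanish for all such $\tau$ forces the natural boundary conditions $\theta'(a)=\theta'(b)=A$ of\re{natBC}. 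Finally, for an arbitrary $\tau\in C^1([a,b])$ the same integration by parts gives $\delta\mathscr E_\theta[\tau]=\big(\theta'(b)-A\big)\tau(b)-\big(\theta'(a)-A\big)\tau(a)$, and both terms now vanish regardless of $\tau(a),\tau(b)$, which closes the equivalence.

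The hard part will be the regularity step: passing from the weak statement (vanishing against test functions) to the classical, pointwise Euler--Lagrange equation and the conclusion $\theta\in C^2([a,b])$, since $\theta$ is only assumed $C^1$ at the outset. This is precisely the du Bois--Reymond argument, and it is clean here because $\pdd{\mathscr L}{\theta'}=1\neq0$, so the Legendre condition holds and the standard regularity result applies. The only genuinely new observation is that the single space $\mathcal C^N([a,b])$ is simultaneously rich enough---it contains the compactly supported perturbations needed to produce the Euler--Lagrange equation---and permissive enough---it allows free boundary values of $\tau$, needed to produce the natural boundary conditions.
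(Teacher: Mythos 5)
Your proof is correct and follows essentially the same route as the paper: the paper's own proof is just a terse chain of equivalences asserting that vanishing on $\mathcal C^N([a,b])$ is equivalent to the Euler--Lagrange equation\re{thetapppot} together with the natural boundary conditions\re{natBC}, which is in turn equivalent to vanishing on $C^1([a,b])$, and your argument fills in exactly these steps. The only content you add beyond the paper's ``direct computation'' is the du Bois--Reymond regularity step justifying $\theta\in C^2$ before integrating by parts, which is a legitimate and correctly handled refinement rather than a different approach.
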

\begin{proof}
By direct computation of the first variation:
\ben{}
&&\forall \tau\in C^N([a,b]): \delta\mathscr E_\theta [\tau]=0.  \nonumber\\
&\Leftrightarrow &  \left . \pd{\mathscr L}{\theta'} \right|_{\theta(b),\theta'(b)} = \left . \pd{\mathscr L}{\theta'} \right|_{\theta(a),\theta'(a)} = 0, \quad \textrm{and} \qquad \left.\pd{\mathscr L}{\theta}\right|_{\theta(s),\theta'(s)} - \sd{}{s}\left . \pd{\mathscr L}{\theta '}\right|_{\theta(s),\theta'(s)}  = 0. \nonumber\\ 
&\Leftrightarrow&\forall \tau\in C^1([a,b]): \delta\mathscr E_\theta [\tau]=0.\nonumber
\een \qed
\end{proof}

Recall from section~\ref{sec-neu} that we can express the second variation of $\mathscr E$ as 
\be{}
\delta^2 \mathscr E_\theta[\tau] = K_{-V_{\theta\theta}\circ \theta,[a,b]} [\tau],
\ee
where for any function $f\in C^0([a,b])$, the functional $K_{f,[a,b]}$ was defined by\re{DefK} recalled here for convenience: 
\be{appdefK}
K_{f,[a,b]} [\tau] = \int_a^b {\tau'}^2(s)  + f(s)\,  \tau^2(s) ~~\textrm{d} s. 
\ee
We then have 
\begin{lemma}\label{appAlem} Let $f\in C^0([a,b])$  and the functional $K$ be defined according to\re{appdefK}, then 
\be{}
\forall \tau \in C^1([a,b]),~~\forall \eta>0,~~ \exists \bar \tau \in \mathcal C^N([a,b]): \left |K_{f,[a,b]}[\tau] - K_{f,[a,b]}[\bar \tau] \right |<\eta.
\ee
\end{lemma}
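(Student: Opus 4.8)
The plan is to leave $\tau$ unchanged away from the endpoints and to correct it only on two short collars $[a,a+\delta]$ and $[b-\delta,b]$, replacing it there by explicit quadratics engineered so that the resulting $\bar\tau$ is $C^1$, satisfies $\bar\tau'(a)=\bar\tau'(b)=0$ (hence lies in $\mathcal C^N([a,b])$), and perturbs the value of $\mK{f}{a}{b}$ by only $O(\delta)$. Since $\delta>0$ is at our disposal, taking it small enough gives $\big|\mK{f}{a}{b}[\tau]-\mK{f}{a}{b}[\bar\tau]\big|<\eta$. This mirrors exactly the gluing device already used in the proof of Lemma~\ref{lem-st}.

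Concretely, I would set on $[a,a+\delta]$
\be{appAp}
p_\delta(s)=\tau(a+\delta)+\tau'(a+\delta)\big(s-(a+\delta)\big)+\frac{\tau'(a+\delta)}{2\delta}\big(s-(a+\delta)\big)^2,
\ee
and symmetrically on $[b-\delta,b]$
\be{appAq}
q_\delta(s)=\tau(b-\delta)+\tau'(b-\delta)\big(s-(b-\delta)\big)-\frac{\tau'(b-\delta)}{2\delta}\big(s-(b-\delta)\big)^2,
\ee
and define $\bar\tau$ to be $p_\delta$ on $[a,a+\delta]$, $\tau$ on $[a+\delta,b-\delta]$, and $q_\delta$ on $[b-\delta,b]$. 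A one-line check gives $p_\delta(a+\delta)=\tau(a+\delta)$ and $p_\delta'(a+\delta)=\tau'(a+\delta)$, so $\bar\tau$ is $C^1$ at the left junction, while $p_\delta'(a)=0$; the analogous identities hold for $q_\delta$ at $b-\delta$ and at $b$. Hence $\bar\tau\in\mathcal C^N([a,b])$ by construction.

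The crux is that $p_\delta$ and $p_\delta'$ remain uniformly bounded as $\delta\to0^+$. On $[a,a+\delta]$ one has $|s-(a+\delta)|\le\delta$, so $p_\delta'(s)=\tau'(a+\delta)\big[1+(s-(a+\delta))/\delta\big]$ with the bracket in $[0,1]$, giving $|p_\delta'(s)|\le|\tau'(a+\delta)|$ and $|p_\delta(s)-\tau(a+\delta)|\le\tfrac32|\tau'(a+\delta)|\,\delta$; since $\tau\in C^1([a,b])$, these bounds stay finite (indeed converge, to $|\tau'(a)|$ and $0$) as $\delta\to0$. Because $f$ is continuous, hence bounded on $[a,b]$, the integrand $(p_\delta')^2+f\,p_\delta^2$ is then bounded on $[a,a+\delta]$ by a constant independent of $\delta$, and so is the original integrand $(\tau')^2+f\,\tau^2$. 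Integrating over a set of length $\delta$ therefore costs only $O(\delta)$ on each collar, and since $\bar\tau\equiv\tau$ on $[a+\delta,b-\delta]$ the bulk contributions cancel exactly; thus $\big|\mK{f}{a}{b}[\tau]-\mK{f}{a}{b}[\bar\tau]\big|=O(\delta)$, and choosing $\delta$ small finishes the proof.

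The one point that requires care --- and the reason for the quadratic rather than a cruder correction --- is the control of $(\bar\tau')^2$: forcing the derivative down to zero at the tips must not inflate it in between. The quadratic keeps $|\bar\tau'|\le|\tau'(a+\delta)|$ on the collar while simultaneously matching value and slope at the interior junction, so the $C^1$ gluing is automatic and no derivative blow-up occurs. Everything else is a routine $O(\delta)$ estimate.
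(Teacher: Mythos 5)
Your construction is exactly the paper's own proof: the quadratics $p_\delta$, $q_\delta$ coincide with the paper's $p_\epsilon$, $q_\epsilon$ in\ret{appdefp}{appdefq}, the glued $\bar\tau$ is the same, and your uniform bounds on $p_\delta$ and $p_\delta'$ deliver the same $O(\delta)$ collar estimate that the paper obtains via the change of variables $\epsilon z = s-(a+\epsilon)$. The argument is correct and complete, so nothing further is needed.
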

\begin{proof}
Let $\tau\in C^1([a,b])$, and $\epsilon\in \mathbb R_0^+ : ~ 0<\epsilon<b-a$. Also define the polynomial $p_\epsilon$ and $q_\epsilon$ as 
\ben{appdefp}
p_\epsilon(s)&=&  \tau (a+\epsilon) + \tau'(a+\epsilon) (s-(a+\epsilon))+ \frac{\tau'(a+\epsilon)}{2\epsilon} (s-(a+\epsilon))^2, \\
q_\epsilon(s)&=&  \tau (b-\epsilon) + \tau'(b-\epsilon) (s-(b-\epsilon))- \frac{\tau'(b-\epsilon)}{2\epsilon} (s-(b-\epsilon))^2, \label{appdefq}
\een
and remark that 
\ben{}
K_{f,[a,a+\epsilon]}[p_\epsilon] &=& \int_a^{a+\epsilon} {p_\epsilon'}^2 (s)  + f(s)~ p_\epsilon^2(s) \textrm{d}s \nonumber \\
&=&\epsilon  \int_{-1}^0 {\tau'}^2(a+\epsilon)  (1+z)^2 \nonumber\\
&&\qquad \qquad  + f(a+ \epsilon (z-1)) \left ( \tau(a+\epsilon)+ \epsilon \tau'(a+\epsilon) \left (z+\frac{z^2}{2}\right)\right)^2 ~~\textrm{d} z\nonumber\\
&=&O(\epsilon), \label{apppepsvanish}
\een
where the second equality comes after the change of variables $\epsilon z = s-(a+\epsilon)$.  A similar argument leads to 
\be{appqepsvanish}
K_{f,[b-\epsilon,b]}[q_\epsilon]= O(\epsilon). 
\ee

Then consider the following function
\be{}
\bar \tau(s) = 
\left \{ 
\begin{split}
p_\epsilon (s) & \qquad \textrm{if } s\in[a,a+\epsilon],\\
\tau(s) & \qquad \textrm{if } s\in(a+\epsilon,b-\epsilon),\\
q_\epsilon (s) & \qquad \textrm{if } s\in[b-\epsilon,b].
\end{split} 
\right .
\ee
Note that by construction, $\bar \tau\in\mathcal C^N([a,b])$. 

Finally, we have
\ben{}
K_{f,[a,b]}[\tau] &=& K_{f,[a,b]} [\bar \tau] - K_{f,[a,a+\epsilon]} [p_\epsilon]- K_{f,[b-\epsilon,b]} [q_\epsilon] +  K_{f,[a,a+\epsilon]} [\tau] +  K_{f,[b-\epsilon,b]} [\tau] \nonumber\\
&=& K_{f,[a,b]} [\bar \tau] + O(\epsilon),
\een
where the first equality comes from splitting the domains of integration and the second equality comes by substituting the third and fourth terms according to\ret{apppepsvanish}{appqepsvanish} and because the last two terms are integrals of bounded integrands over a domain of size $O(\epsilon)$. 
\qed\end{proof}

The two following propositions are then direct applications of Lemma~\ref{appAlem}.
\begin{proposition}\label{app-propinsta}
Let $\theta$ be a stationary function for the functional $\mathscr E$. Then the following holds: 
\be{}
\exists \tau \in C^1([a,b]): \delta^2 \mathscr E_\theta [\tau]<0 \quad
\Rightarrow \quad 
\exists \bar \tau \in C^N([a,b]): \delta^2 \mathscr E_\theta [\bar \tau]<0.
\ee
\end{proposition}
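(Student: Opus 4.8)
The plan is to deduce the proposition immediately from Lemma~\ref{appAlem}, using the identity $\delta^2 \mathscr E_\theta[\tau] = \mK{f}{a}{b}[\tau]$ with $f(s) = -\left.\sdd{V}{\theta}\right|_{\theta(s)}$ recalled just above the Lemma. This identity lets me translate both the hypothesis and the desired conclusion entirely into statements about the quadratic functional $K$, so that the whole argument reduces to transferring a strict sign from a general $C^1$ perturbation to one obeying the Neumann end conditions.

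First I would take the $C^1$ perturbation $\tau$ furnished by the hypothesis, for which $\mK{f}{a}{b}[\tau] = \delta^2\mathscr E_\theta[\tau] < 0$. The key idea is that Lemma~\ref{appAlem} lets me approximate the value $\mK{f}{a}{b}[\tau]$ arbitrarily closely by $\mK{f}{a}{b}[\bar\tau]$ for some $\bar\tau \in \mathcal C^N([a,b])$, and a strict inequality is stable under sufficiently small perturbations of the value. Hence the negativity will survive the passage from $\tau$ to $\bar\tau$.

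Concretely, I would set the tolerance $\eta = \tfrac12\left|\mK{f}{a}{b}[\tau]\right| = -\tfrac12\,\mK{f}{a}{b}[\tau] > 0$ and invoke Lemma~\ref{appAlem} to produce $\bar\tau \in \mathcal C^N([a,b])$ with $\left|\mK{f}{a}{b}[\tau] - \mK{f}{a}{b}[\bar\tau]\right| < \eta$. This yields $\mK{f}{a}{b}[\bar\tau] < \mK{f}{a}{b}[\tau] + \eta = \tfrac12\,\mK{f}{a}{b}[\tau] < 0$, and translating back through the identity gives $\delta^2\mathscr E_\theta[\bar\tau] < 0$, which is exactly the claim.

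There is essentially no obstacle at this level, because all the analytic work has already been done in Lemma~\ref{appAlem}: the explicit parabolic endpoint corrections $p_\epsilon$ and $q_\epsilon$ that bend a generic $C^1$ function into one with vanishing end-derivatives while altering $K$ by only $O(\epsilon)$. The single point requiring care is the direction of the inequality when choosing $\eta$: one must take $\eta$ strictly smaller than $\left|\mK{f}{a}{b}[\tau]\right|$ so that the approximation cannot overshoot past zero, and the choice $\eta = \tfrac12\left|\mK{f}{a}{b}[\tau]\right|$ comfortably guarantees this.
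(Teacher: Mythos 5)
Your proposal is correct and is precisely the paper's argument: the paper proves this proposition as a ``direct application'' of Lemma~\ref{appAlem}, and your instantiation with tolerance $\eta=\tfrac12\left|\mK{f}{a}{b}[\tau]\right|$ is exactly the intended application, including the identity $\delta^2\mathscr E_\theta[\tau]=\mK{f}{a}{b}[\tau]$ with $f=-V_{\theta\theta}\circ\theta$. Nothing further is needed.
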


\begin{proposition}\label{app-propsta}
Let $\theta$ be a stationary function for the functional $\mathscr E$. Then the following holds: 
\ben{}
&&\exists \bar M>0:~~ \forall \bar \tau \in C^N([a,b]): \delta^2 \mathscr E_\theta [\bar\tau]\geq \bar M\int_a^b\bar \tau^2(s) \textrm{d} s \nonumber \\
&\Rightarrow &
\exists M>0:~~ \forall \tau \in C^1([a,b]): \delta^2 \mathscr E_\theta [\tau]\geq M\int_a^b \tau^2(s) \textrm{d} s. 
\een
\end{proposition}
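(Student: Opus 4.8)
The plan is to show that the single constant $M=\bar M$ already works, transferring the coercivity estimate from $\mathcal C^N([a,b])$ to the larger space $C^1([a,b])$ one test function at a time. Recall that the second variation is $\delta^2\mathscr E_\theta[\tau]=\mK{f}{a}{b}[\tau]$ with $f=-V_{\theta\theta}\circ\theta\in C^0([a,b])$, so the statement concerns only the quadratic functional $\mK{f}{a}{b}$ and its comparison with the $\textrm{L}^2$ inner product $\braket{\tau|\tau}=\int_a^b\tau^2\,\textrm{d}s$.

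First I would fix an arbitrary $\tau\in C^1([a,b])$ and, for each small $\epsilon\in(0,(b-a)/2)$, invoke the construction underlying Lemma~\ref{appAlem}: define $\bar\tau_\epsilon\in\mathcal C^N([a,b])$ to coincide with $\tau$ on $(a+\epsilon,b-\epsilon)$ and to equal the endpoint-matching polynomials $p_\epsilon,q_\epsilon$ of\ret{appdefp}{appdefq} on the two boundary intervals. Lemma~\ref{appAlem} then furnishes $\mK{f}{a}{b}[\bar\tau_\epsilon]=\mK{f}{a}{b}[\tau]+O(\epsilon)$, that is $\delta^2\mathscr E_\theta[\bar\tau_\epsilon]=\delta^2\mathscr E_\theta[\tau]+O(\epsilon)$. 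I would then record the elementary companion estimate, immediate from the same construction though not literally stated in the lemma, that $\int_a^b\bar\tau_\epsilon^2\,\textrm{d}s=\int_a^b\tau^2\,\textrm{d}s+O(\epsilon)$: indeed $\bar\tau_\epsilon$ and $\tau$ differ only on the two intervals $[a,a+\epsilon]$ and $[b-\epsilon,b]$ of total length $2\epsilon$, on which $\tau$, $p_\epsilon$ and $q_\epsilon$ are uniformly bounded.

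Next I would apply the hypothesis to $\bar\tau_\epsilon\in\mathcal C^N([a,b])$, obtaining $\delta^2\mathscr E_\theta[\bar\tau_\epsilon]\geq\bar M\int_a^b\bar\tau_\epsilon^2\,\textrm{d}s$, and substitute the two $O(\epsilon)$ relations to get $\delta^2\mathscr E_\theta[\tau]+O(\epsilon)\geq\bar M\bigl(\int_a^b\tau^2\,\textrm{d}s+O(\epsilon)\bigr)$. Since both $\delta^2\mathscr E_\theta[\tau]$ and $\int_a^b\tau^2\,\textrm{d}s$ are independent of $\epsilon$, letting $\epsilon\to0^+$ annihilates the error terms and yields $\delta^2\mathscr E_\theta[\tau]\geq\bar M\int_a^b\tau^2\,\textrm{d}s$. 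As $\tau\in C^1([a,b])$ was arbitrary, the claim follows with $M=\bar M$.

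The only point requiring care — and what I regard as the main, if modest, obstacle — is that the implicit constants hidden in the $O(\epsilon)$ terms depend on $\tau$, through its values and derivatives near the endpoints, so one cannot expect a single $\epsilon$ that is admissible uniformly in $\tau$. This is harmless: the argument fixes $\tau$ \emph{before} sending $\epsilon\to0$, and because the two leading terms are $\epsilon$-independent the limiting inequality holds with the \emph{same} $\bar M$ for every $\tau$. Hence no uniform control of the approximation error is needed, and the uniformity of the resulting coercivity constant $M$ is inherited directly from the uniformity of $\bar M$ in the hypothesis.
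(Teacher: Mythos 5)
Your proof is correct and follows exactly the route the paper intends: Proposition~\ref{app-propsta} is presented there as a direct application of Lemma~\ref{appAlem}, i.e.\ approximate a fixed $\tau\in C^1([a,b])$ by $\bar\tau_\epsilon\in\mathcal C^N([a,b])$ built from the polynomials $p_\epsilon,q_\epsilon$ of\ret{appdefp}{appdefq}, apply the coercivity hypothesis to $\bar\tau_\epsilon$, and let $\epsilon\to0^+$ with $\tau$ held fixed, obtaining the conclusion with $M=\bar M$. Your explicit companion estimate $\int_a^b\bar\tau_\epsilon^2\,\textrm{d}s=\int_a^b\tau^2\,\textrm{d}s+O(\epsilon)$ correctly supplies the one detail the lemma's statement does not literally provide, and your closing remark that the $\tau$-dependence of the $O(\epsilon)$ constants is harmless (since the limit is taken for each fixed $\tau$) is exactly the right point.
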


Finally, note that the converse of Propositions~\ref{app-propinsta}~\&~\ref{app-propsta} are trivially true.

\section{Sufficient condition for local minimality\label{app-stronglypositive}}
Assume that $\theta$ is stationary for the functional\ret{myfunc}{FormL}: 
\be{appsp-firstvar}
\forall\tau \in\mathcal C^X([a,b]):~ \delta\mathscr E_\theta[\tau]=0;
\ee
 and that the second variation of $\mathscr E$ at $\theta$ is strongly positive with respect to the $\textrm{L}^2$ norm:
 \be{appsp-secvar}
\exists k\in\mathbb R_0^+:~\forall \tau \in \mathcal C^X([a,b]): \delta^2 \mathscr E_\theta [\tau] \geq k ~\int_a^b \tau^2(s)~\textrm{d}s. 
\ee 
We show that $\theta$ is locally minimal for $\mathscr E$. 
 
Recall from Section~\ref{sec-state} that $\theta$ is minimal if for all \textit{admissible perturbations} $\tau$, there exists a number $M>0$ such that for all $\epsilon\in [-M, M]\setminus\{0\}$,
 \be{appminbasics}
\mathscr E[\theta+ \epsilon \tau]> \mathscr E[\theta].
 \ee
 
 We compute
\ben{app-sp1}
\mathscr E[\theta+\epsilon \tau] &=& \int_a^b \frac 1 2 \Big( (\theta'-A) + \epsilon \tau'\Big)^2 - V(\theta+ \epsilon \tau) ~~\textrm{d} s, \\
&=&\int_a^b \frac {(\theta'-A)^2} 2  + \epsilon (\theta'-A) \tau' + \epsilon^2 \frac{{\tau'}^2}{2}\nonumber \\
	&&\qquad - \Big( V(\theta) + V_\theta(\theta) (\epsilon \tau) +  V_{\theta\theta}(\theta)  \frac {(\epsilon\tau)^2} {2} + \nu (\epsilon \tau;\, s) ~(\epsilon \tau)^2 \Big) ~~\textrm{d} s, \label{app-sp2}\nonumber \\
&=& \mathscr E[\theta] + \epsilon~ \delta \mathscr E_\theta[\tau]  + \frac{\epsilon^2} 2	~ \delta^2\mathscr  E_\theta[\tau]  - \frac{\epsilon^2}{2} \int_a^b 2 \nu (\epsilon \tau;\, s) ~\tau^2 ~\textrm{d} s  \label{app-sp3}\\
&\geq& \mathscr E[\theta] +\frac {\epsilon^2}{2} \left ( \big(k-\bar \nu(\epsilon)\big) \int_a^b \tau^2(s)~ \textrm{d} s    \right). \label{app-sp4}
\een
The second equality comes after rearranging the first term of the integrand and Taylor expanding the second term in\re{app-sp1}. The function $\nu(\epsilon \tau;\, s)$ is the prefactor of the remainder of this Taylor expansion. Accordingly, at each $s$: 
\be{appsp-nu1}
\nu(\epsilon \tau;\, s)\stackrel{\epsilon \tau(s) \to 0}{\to} 0.
\ee
The third equality comes after grouping the first and fourth, second and fifth and third and sixth terms in\re{app-sp3}. 
Finally, we define the function $\bar \nu(\epsilon)=\sup_{s\in[a,b]} |\nu (\epsilon \tau(s);\, s)|$ and note that\re{appsp-nu1} insures that 
\be{app-limnu}
\bar \nu(\epsilon)\stackrel{\epsilon\to 0}{\to} 0.
\ee
The inequality\re{app-sp4} is due to\re{appsp-firstvar} and\re{appsp-secvar}.

It is then always possible to choose $M$ such that the second term in the R.H.S. of\re{app-sp4} is strictly positive $\forall\epsilon\in[-M,M]\setminus\{0\}$.

\section{Sturm-Liouville problems\label{app-itworks}}

Let $p, q\in C^1([a,b])$ with $p>0$. We consider the following Sturm-Liouville problem with separate boundary conditions: 
\be{appSL}
(- p y')' + q \, y = \lambda y; \qquad y'(a)=0, \quad y'(\sigma)=0,
\ee
where $a< \sigma\leq b$. 

We first list a number of well known results regarding regular Sturm-Liouville problems with separate boundary conditions (see~\cite{kepe10} and reference therein): the eigenvalues $\lambda$ for which\re{appSL} admits a solution are separate, bounded from below and simple (the vectorial space of eigenfunctions associated to one eigenvalue is of dimension one). Furthermore eigenfunctions associated to different eigenvalues are orthogonal. 

Our results depend crucially upon the fact that eigenvalues $\lambda$ of the Sturm-Liouville problems\re{appSL} with separate boundary conditions are continuous functions of $\sigma$ (see~\cite{koze96} Theorem 3.1):
\begin{proposition}
\label{theo32}
If for $\sigma=\sigma_0$ there exists a solution to\re{appSL} with $\lambda = \lambda_0$, then for all $\epsilon>0$, there exists a $\delta$ such that if $|\sigma-\sigma_0|<\delta$, then there exits a solution of\re{appSL} with $|\lambda-\lambda_0|<\epsilon$.
\end{proposition}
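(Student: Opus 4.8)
The plan is to recast \re{appSL} as a shooting problem and then apply the Implicit Function Theorem. For each $\lambda$ let $y(\,\cdot\,;\lambda)$ be the unique solution of $-(py')'+qy=\lambda y$ with $y(a)=1$ and $y'(a)=0$; this normalization enforces the left boundary condition and entails no loss of generality, since an eigenfunction with $y(a)=y'(a)=0$ would vanish identically. As \re{appSL} is a regular linear second-order equation with $C^1$ coefficients and $p>0$, standard results on the smooth dependence of solutions on parameters show that $F(\sigma,\lambda):=(py')(\sigma;\lambda)$ is of class $C^1$ in $(\sigma,\lambda)$. A pair $(\sigma,\lambda)$ solves \re{appSL} exactly when $F(\sigma,\lambda)=0$, because the only remaining condition is $y'(\sigma)=0$. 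The hypothesis gives one zero, $F(\sigma_0,\lambda_0)=0$, and the goal is to produce nearby zeros along a continuous branch $\lambda=\lambda(\sigma)$.

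The crux is the nondegeneracy $\partial F/\partial\lambda(\sigma_0,\lambda_0)\neq0$ required by the Implicit Function Theorem. Writing $u=y(\,\cdot\,;\lambda_0)$ for the eigenfunction and $w=\partial_\lambda y(\,\cdot\,;\lambda)\big|_{\lambda=\lambda_0}$ for the variation, differentiation of the ODE in $\lambda$ gives $-(pw')'+qw=\lambda_0 w+u$ with $w(a)=w'(a)=0$ inherited from the $\lambda$-independent initial data, and the quantity sought is $\partial_\lambda F(\sigma_0,\lambda_0)=(pw')(\sigma_0)$. I would extract it from the Lagrange identity $uL[w]-wL[u]=\frac{d}{dx}\big[p(u'w-uw')\big]$, with $L[y]=-(py')'+qy$: since $L[u]=\lambda_0 u$ and $L[w]=\lambda_0 w+u$, the left-hand side collapses to $u^2$, and integration over $[a,\sigma_0]$ yields $\int_a^{\sigma_0}u^2\,dx=\big[p(u'w-uw')\big]_a^{\sigma_0}$.

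Evaluating the boundary term completes the argument: it vanishes at $a$ (because $u'(a)=w(a)=w'(a)=0$) and at $\sigma_0$ reduces to $-p(\sigma_0)u(\sigma_0)w'(\sigma_0)$ (because $u'(\sigma_0)=0$), whence $\partial_\lambda F(\sigma_0,\lambda_0)=(pw')(\sigma_0)=-u(\sigma_0)^{-1}\int_a^{\sigma_0}u^2\,dx$. This is finite and nonzero: $u(\sigma_0)\neq0$, for otherwise $u(\sigma_0)=u'(\sigma_0)=0$ and uniqueness of the initial value problem would force $u\equiv0$, while $\int_a^{\sigma_0}u^2\,dx>0$ as $u$ is a nontrivial eigenfunction. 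With $F$ of class $C^1$ and $\partial_\lambda F\neq0$ at $(\sigma_0,\lambda_0)$, the Implicit Function Theorem furnishes a neighborhood of $\sigma_0$ and a $C^1$ (in particular continuous) function $\sigma\mapsto\lambda(\sigma)$ with $\lambda(\sigma_0)=\lambda_0$ and $F(\sigma,\lambda(\sigma))=0$. Continuity of $\lambda(\cdot)$ at $\sigma_0$ is exactly the $\epsilon$--$\delta$ statement to be proved.

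I expect the nondegeneracy computation to be the only genuine obstacle; the reformulation and the continuous dependence on parameters are routine. The delicate points are applying the Lagrange identity to the inhomogeneous variational equation for $w$, and observing that the eigenfunction cannot vanish at the moving endpoint $\sigma_0$, which together guarantee $\partial_\lambda F\neq0$ and hence a single-valued, continuous eigenvalue branch through $(\sigma_0,\lambda_0)$.
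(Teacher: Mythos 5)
Your proof is correct, and it is worth noting that the paper does not actually prove this proposition at all: it is quoted from Kong and Zettl (Theorem 3.1 of~\cite{koze96}), so your argument replaces a citation with a self-contained proof. Your route --- normalising the shooting solution by $y(a)=1$, $y'(a)=0$ (legitimate, since an eigenfunction with $y(a)=y'(a)=0$ vanishes identically), reducing the eigenvalue condition to $F(\sigma,\lambda)=(py')(\sigma;\lambda)=0$, and establishing the transversality $\partial_\lambda F(\sigma_0,\lambda_0)=-u(\sigma_0)^{-1}\int_a^{\sigma_0}u^2\,\textrm{d}x\neq 0$ via the Lagrange identity applied to the inhomogeneous variational equation for $w=\partial_\lambda y$ --- is sound: the boundary terms are evaluated correctly, $u(\sigma_0)\neq0$ follows from uniqueness for the initial value problem, and the joint $C^1$ (indeed analytic in $\lambda$) dependence of $F$ is standard for regular linear equations with $p,q\in C^1$, $p>0$. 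The comparison cuts both ways. Kong--Zettl's theorem is more general --- it covers arbitrary separated boundary conditions with much weaker regularity on the coefficients --- which is why the paper can cite it wholesale; your argument is tailored to the Neumann--Neumann problem at hand, but in this setting it buys strictly more: the branch through $(\sigma_0,\lambda_0)$ is locally single-valued and $C^1$, not merely continuous, and implicit differentiation of $F(\sigma,\lambda(\sigma))=0$, using $\partial_\sigma F(\sigma_0,\lambda_0)=(q(\sigma_0)-\lambda_0)\,u(\sigma_0)$, gives $\lambda'(\sigma_0)=(q(\sigma_0)-\lambda_0)\,u(\sigma_0)^2\big/\int_a^{\sigma_0}u^2\,\textrm{d}x$, which for an $L^2$-normalised eigenfunction is exactly the Dauge--Helffer formula\re{DHN} that the paper imports separately from~\cite{dahe93}. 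One caveat, echoing the paper's own warning immediately after the proposition: your result is purely local and does not by itself exclude eigenvalue branches appearing elsewhere as $\sigma$ varies --- in the paper that issue is handled by Propositions~\ref{noneginf} and~\ref{outoftheblue} --- but the statement as posed is local, and your proof establishes it in full.
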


It is however important to realise that, this theorem does not imply the continuity of the $k$-th eigenvalue. It only states that the existence of the eigenvalue $\lambda_0$ at $\sigma_0$ implies the existence and continuity of $\lambda$ as a function of $\sigma$ in some (arbitrarily small) open set around $\sigma_0$. It is in fact possible to build examples (see~\cite{koze96}) of Sturm-Liouville problems with slightly more complicated boundary conditions than that of\re{appSL} which  obey the assumptions of Proposition~\ref{theo32} but for which a new branch of eigenvalues appear at some value $\sigma=c$ with $a<c<b$ and $\lim_{\sigma\to c^+}=-\infty$. 

If such a branch of eigenvalues existed for the Sturm-Liouville operator $\mathcal S$ defined in Section~\ref{sec-num}, our argument would collapse. Indeed, following~\cite{ma09}, we proposed to count the number of negative eigenvalues of\re{appSL} with $\sigma=b$ by counting the number of inborn eigenvalues when $\sigma\to a^+$ and then keeping track of the change of signs of eigenvalues as $\sigma$ is continuously increased up to $b$. If negative eigenvalues can simply appear ``out of the blue'' without having to be positive eigenvalues that changed sign, the argument would fail. Let us first prove that 
\begin{proposition}
\label{noneginf}
If $p=1$ and there exists a number $Q$ such that $|q(x)|<Q$ $\forall x\in[a,\sigma]$, then all eigenvalues $\lambda$ of the problem\re{appSL} must respect $\lambda>-Q$.
\end{proposition}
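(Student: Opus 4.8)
The plan is to establish the lower bound through the Rayleigh quotient associated with the Neumann problem \eqref{appSL}. Since $p=1$, an eigenfunction $y$ with eigenvalue $\lambda$ satisfies $-y'' + q\,y = \lambda\,y$ on $[a,\sigma]$ together with $y'(a)=y'(\sigma)=0$. First I would multiply this equation by $y$ and integrate over $[a,\sigma]$, reducing the spectral statement to a single integral identity that can be estimated directly.

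The key step is to integrate the term $\int_a^\sigma (-y'')\,y\,\textrm{d}x$ by parts. This produces a boundary contribution $[-y'y]_a^\sigma$ which vanishes identically, precisely because of the Neumann conditions $y'(a)=y'(\sigma)=0$; this is where the boundary data enter the argument. What remains is the identity
\[
\int_a^\sigma (y')^2\,\textrm{d}x + \int_a^\sigma q\,y^2\,\textrm{d}x = \lambda \int_a^\sigma y^2\,\textrm{d}x .
\]
Because $y$ is a nontrivial eigenfunction we have $\int_a^\sigma y^2\,\textrm{d}x>0$, so $\lambda$ may be expressed as the quotient of the left-hand side by $\int_a^\sigma y^2\,\textrm{d}x$, and the bound becomes a matter of estimating the numerator from below.

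The estimate is then immediate: the gradient term $\int_a^\sigma (y')^2\,\textrm{d}x$ is nonnegative, and the hypothesis $|q(x)|<Q$ forces $q(x)>-Q$ pointwise, so $\int_a^\sigma q\,y^2\,\textrm{d}x > -Q\int_a^\sigma y^2\,\textrm{d}x$; dividing by the positive quantity $\int_a^\sigma y^2\,\textrm{d}x$ yields $\lambda > -Q$. The only point demanding any care---and the nearest thing to an obstacle---is the strictness of the inequality rather than a merely weak one: this is guaranteed because $|q|<Q$ is a strict pointwise bound and $y\not\equiv 0$, so that $\int_a^\sigma (q+Q)\,y^2\,\textrm{d}x>0$ holds even in the degenerate case where $y$ is constant and the gradient term vanishes. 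Notably, no appeal to the discreteness, simplicity, or ordering of the spectrum is required; the same computation applies to every eigenpair, so the bound holds uniformly for all eigenvalues at once, which is exactly what the subsequent argument about the nonappearance of eigenvalues from $-\infty$ needs.
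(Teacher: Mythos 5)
Your proof is correct, but it takes a genuinely different route from the paper. The paper argues by contradiction through the initial value problem: assuming $\lambda\leq -Q$, it normalises the eigenfunction to $h=u/u(a)$ (implicitly using that $u(a)\neq 0$, which holds since $u'(a)=0$ and $u\equiv 0$ otherwise by uniqueness), so that $h$ solves $h''=(q-\lambda)h$ with $h(a)=1$, $h'(a)=0$; then $h''\geq (q+Q)h>0$ forces $h'$ to be strictly increasing from zero, so $h'(\sigma)=0$ is impossible. You instead use the standard quadratic-form (Rayleigh quotient) argument: multiply by $y$, integrate by parts, kill the boundary term with the Neumann conditions, and bound $\int_a^\sigma (y')^2 + q\,y^2\,\textrm{d}x$ from below by $-Q\int_a^\sigma y^2\,\textrm{d}x$, with strictness coming from $q+Q>0$ on the compact interval and $y\not\equiv 0$. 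Your handling of the strict inequality (including the constant-eigenfunction case where the gradient term vanishes) is sound, since $q+Q$ is continuous and strictly positive, hence bounded below by a positive constant on $[a,\sigma]$. Comparing the two: your variational argument is shorter, avoids the implicit normalisation step $u(a)\neq 0$, and generalises immediately to other self-adjoint boundary conditions (Dirichlet, or $p\not\equiv 1$ with $p>0$, where the same identity gives the same bound), whereas the paper's shooting-style proof stays within the IVP framework that the rest of Appendix C is built on (tracking the sign of $h'$ as $\sigma$ varies), and so reads more naturally alongside Proposition~\ref{outoftheblue} and the conjugate-point machinery. Both deliver the uniform bound over all eigenpairs that the ``no eigenvalues from $-\infty$'' argument requires.
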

\begin{proof}
Assume there exists a $\lambda\leq-Q$ such that\re{appSL} admits a solution $u$. By linearity of\re{appSL}, the function $h=u/u(a)$ is also a solution for the same $\lambda$. By construction, $h$ is the unique solution of the initial value problem
\be{appIVPpr1}
h''= (q-\lambda) h;\qquad h(a)=1,\quad h'(a)=0. 
\ee
But since we assumed $\lambda\leq-Q$, Eq.\re{appIVPpr1} implies $ h''\geq (q+Q) h(x)>0$ and $h'$ is a monotonous strictly  increasing function. It is therefore impossible that $h'(\sigma)=0$  and $h$ can not be a solution of\re{appSL}. A contradiction.\qed
\end{proof}

Since our Sturm-Liouville operator $\mathcal S$ respect the hypothesis of Proposition~\ref{noneginf}, there can not exist a branch of eigenvalues the limit of which is $-\infty$. Next, we must also rule out the possibility of new branches appearing on open sets with finite limits:
\begin{proposition}
\label{outoftheblue}
Let $\lambda(\sigma)$ be a continuous branch of eigenvalues of\re{appSL} with $\sigma\in(c_1,c_2)$ an open set with $a<c_1<c_2\leq b$ and with $p$ and $q$ respecting the assumptions of Proposition~\ref{noneginf}. If $\lambda_1= \lim_{\sigma\to c_1^+} \lambda(\sigma)<+\infty$, then  $\lambda_1$ is an eigenvalue of\re{appSL} with $\sigma=c_1$. Similarly, if $\lambda_2 = \lim_{\sigma\to c_2^-} \lambda(\sigma)<+\infty$, then  $\lambda_2$ is an eigenvalue of\re{appSL} with $\sigma=c_2$.
\end{proposition}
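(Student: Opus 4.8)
The plan is to reduce the existence of an eigenvalue at a given endpoint $\sigma$ to the vanishing of a single jointly continuous function of $(\sigma,\lambda)$, and then to pass to the limit $\sigma\to c_1^+$ using that continuity together with the finiteness of $\lambda_1$. For each $\lambda\in\mathbb R$ let $h_\lambda$ be the unique solution of the initial value problem
\[
h_\lambda'' = (q-\lambda)\,h_\lambda,\qquad h_\lambda(a)=1,\quad h_\lambda'(a)=0,
\]
which exists on all of $[a,b]$ because $p=1$ and $q\in C^1([a,b])$ make this a regular linear equation with continuous coefficients. This is the shifted analogue of the initial value problem \re{IVPN} used in Proposition~\ref{prop-NeuConj}.

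First I would establish the characterisation: for every $\sigma\in(a,b]$, the number $\lambda$ is an eigenvalue of \re{appSL} if and only if $h_\lambda'(\sigma)=0$. The forward implication is immediate, since $h_\lambda$ is nontrivial and satisfies $h_\lambda'(a)=0$. For the converse, any eigenfunction $u$ has $u'(a)=0$ and $u(a)\neq0$---otherwise uniqueness for the initial value problem would force $u\equiv0$---so $u/u(a)=h_\lambda$ and $u'(\sigma)=0$ gives $h_\lambda'(\sigma)=0$; this is exactly Proposition~\ref{prop-NeuConj} applied to the potential $q-\lambda$. Next I would invoke the standard continuous dependence of solutions of linear ordinary differential equations on parameters and initial data to conclude that $G(\sigma,\lambda):=h_\lambda'(\sigma)$ is jointly continuous on $[a,b]\times\mathbb R$. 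By hypothesis $\lambda(\sigma)$ is an eigenvalue for every $\sigma\in(c_1,c_2)$, so $G(\sigma,\lambda(\sigma))=0$ there.

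Finally I would take the limit $\sigma\to c_1^+$: since $\lambda(\sigma)\to\lambda_1<+\infty$ the pair $(\sigma,\lambda(\sigma))$ converges to the finite point $(c_1,\lambda_1)\in[a,b]\times\mathbb R$, and joint continuity of $G$ gives $G(c_1,\lambda_1)=0$, i.e. $h_{\lambda_1}'(c_1)=0$; by the characterisation $\lambda_1$ is an eigenvalue of \re{appSL} with $\sigma=c_1$, with eigenfunction $h_{\lambda_1}$. The case of $\lambda_2$ is identical with $\sigma\to c_2^-$. The delicate point---and the one place the hypotheses really enter---is the finiteness of $\lambda_1$: it is precisely what guarantees that the limiting pair stays in the region where $G$ is continuous, so that ``out of the blue'' branches with a finite endpoint are excluded. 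This is also why the companion Proposition~\ref{noneginf}, which rules out branches escaping to $-\infty$, is needed alongside this result.
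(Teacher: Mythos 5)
Your proof is correct, and it reaches the paper's conclusion by a cleaner technical route. Both arguments are shooting arguments resting on continuous dependence of solutions of linear ODEs on parameters, but the paper handles the moving endpoint by rescaling $[a,\sigma]$ to the fixed interval $[0,1]$, converting to a first-order system in which $\sigma$ enters only through the coefficients (with $\lambda(\sigma)$ folded in), and then citing continuous dependence on the single parameter $\sigma$. You instead exploit the fact that the shooting solution $h_\lambda$ is defined on all of $[a,b]$ independently of $\sigma$, so the endpoint dependence reduces to mere evaluation, and you package everything into the jointly continuous function $G(\sigma,\lambda)=h_\lambda'(\sigma)$ on $[a,b]\times\mathbb R$; the proposition then follows from $G(\sigma,\lambda(\sigma))=0$ on $(c_1,c_2)$ by passing to the limit. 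This dispenses with the rescaling and the first-order reduction altogether, and it has the additional merit of making explicit the eigenvalue characterisation ($\lambda$ is an eigenvalue of\re{appSL} iff $h_\lambda'(\sigma)=0$, the shifted form of Proposition~\ref{prop-NeuConj}) that the paper's proof uses only implicitly. One point where the paper is more careful than you: the hypothesis only excludes $\lambda_1=+\infty$, so for your limiting pair $(c_1,\lambda_1)$ to lie in $[a,b]\times\mathbb R$ you must also rule out $\lambda_1=-\infty$; the paper does this at the outset by invoking Proposition~\ref{noneginf} (all eigenvalues exceed $-Q$), whereas you relegate this to a closing remark --- it belongs inside the proof as the step that legitimises the phrase ``converges to the finite point''.
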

\begin{proof}
There are two cases to consider: either $\lim_{\sigma\to c_i}= \lambda_i<+\infty$ or $\lim_{\sigma\to c_i}=+\infty$. Since the property is not concerned with the latter, we focus on the former. Note that we can not have $\lim_{\sigma\to c_i}=-\infty$ because of Proposition~\ref{noneginf}. From now on, we therefore assume that $\lambda_i$ are finite. Consider the following family of initial value problems
\be{appIVP2}
-h''(x)+ (q(x)-\lambda(\sigma)) h=0;\qquad h(a,\sigma)=1,\quad h'(a,\sigma)=0.
\ee
After rescaling according to $x= a +(\sigma-a) z$, it is equivalent to the first order problem
\be{appIVP3}
\left\{ 
\begin{split}
\sd{v}{z} &= (\sigma-a) \bigg [q(a+ (\sigma-a) z) - \lambda(\sigma) \bigg] g,\\
\sd{g}{z} &= (\sigma-a) v,
\end{split}
\right .
\qquad \left\{ 
\begin{split}
g(0,\sigma)&=1\\
v(0,\sigma)&=0,
\end{split}
\right.
\ee
It is easy to see that if $\Big(g(z,\sigma),v(z,\sigma)\Big)$ solves\re{appIVP3} for a particular value of $\sigma$, then $h(x,\sigma)=g\left(\frac {x-a}{\sigma-a},\sigma\right)$ solves\re{appIVP2}. Furthermore $h'(x,\sigma)= \frac{1}{\sigma-a} \sd{g}{z}= v\left(\frac {x-a}{\sigma-a},\sigma\right)$. \textit{Vice versa} if $h(x,\sigma)$ solves\re{appIVP2} then $\Big( h[a+ (\sigma-a) z,\sigma],h'[a+ (\sigma-a) z,\sigma]\Big)$ solves\re{appIVP3}.

Since $\lambda(\sigma)$ is an eigenvalue of\re{appSL} for $\sigma\in(c_1,c_2)$,  we have $\left .\left(\pd{}{x}h(x,\sigma) \right)\right |_{x=\sigma}=0$. Hence if $(g,v)$ solves\re{appIVP3}, then $v(1,\sigma) =0~\forall\sigma\in (c_1,c_2)$. But since the solution $(g,v)$ of the IVP\re{appIVP3} is continuously dependent on both $x$ and the parameter $\sigma$ (see e.g.~\cite{ha80} Theorem 3.2), $\lim_{\sigma\to c_i} v(1,\sigma)=0$ which in turn implies that $\lim_{\sigma\to c_i} h'(\sigma)=0$ so that $h(x,c_i)$ is an eigenfunction of the Sturm-Liouville problem with eigenvalue $\lambda_i$.\qed 
\end{proof}

Together, Proposition~\ref{noneginf}~\&~\ref{outoftheblue} insure that in the case of the Sturm-Liouville operators\re{appSL} with Neumann boundary conditions, $p=1$ and bounded $q$ there can be no negative eigenvalues appearing ``out of the blue'' for $\sigma>a$. 

Finally, we must show that when there exists a $\sigma$ such that an eigenvalue vanishes: $\lambda(\sigma)=0$, then only one eigenvalue changes sign. Indeed, if two eigenvalues were to change signs at the same $\sigma$, then the counting argument exposed earlier would also fail. However, different continuous branches of eigenvalues never cross:
\begin{proposition}
Let $\lambda_1(\sigma)$ and $\lambda_2(\sigma)$ be continuous branches of eigenvalues of\re{appSL} such that there exists an open set $(a,\sigma_0)\subset (a,b]$ such that $\sigma \in(a,\sigma_0): \lambda_1(\sigma)\neq \lambda_2(\sigma)$. Then,~ $\lambda_1(\sigma_0)\neq\lambda_2(\sigma_0)$.
\end{proposition}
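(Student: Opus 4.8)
The plan is to recast the eigenvalue condition as a single scalar equation $F(\sigma,\lambda)=0$ and to show that $\partial_\lambda F$ never vanishes along a branch, so that the implicit function theorem prevents two continuous branches from merging. For each $\lambda$ I would let $h(x;\lambda)$ be the unique solution of the initial value problem of the form\re{appIVP2} carrying the left boundary datum $h(a)=1$, $h'(a)=0$; by the theorem on smooth dependence of solutions of ordinary differential equations on parameters (as already invoked for Proposition~\ref{outoftheblue}), $h$ depends smoothly and jointly on $x$ and $\lambda$. A number $\lambda$ is an eigenvalue of\re{appSL} on $[a,\sigma]$ exactly when the remaining boundary condition is met, i.e. when $h'(\sigma;\lambda)=0$. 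I therefore set $F(\sigma,\lambda)= h'(\sigma;\lambda)$: a smooth real-valued function whose zero set is precisely the union of all eigenvalue branches, and every branch satisfies $F\big(\sigma,\lambda_i(\sigma)\big)=0$.

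The crux of the argument, and the step I expect to be the main obstacle, is the transversality $\partial_\lambda F(\sigma,\lambda_0)\neq0$ whenever $F(\sigma,\lambda_0)=0$. I would obtain it from a Wronskian identity. Writing $\dot h=\partial_\lambda h$ and differentiating\re{appIVP2} in $\lambda$ gives $-\dot h''+(q-\lambda)\dot h = h$ with $\dot h(a)=\dot h'(a)=0$. A direct computation then yields $\sd{}{x}\big[h'\dot h-h\dot h'\big]=h^2$; integrating from $a$ to $\sigma$ and using the boundary data annihilates every endpoint term except one, leaving $-h(\sigma)\,\dot h'(\sigma)=\int_a^\sigma h^2\,\textrm{d}x>0$. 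At an eigenvalue $h'(\sigma)=0$, so $h(\sigma)$ cannot also vanish (otherwise uniqueness for the initial value problem would force $h\equiv0$, contradicting $h(a)=1$); hence $\partial_\lambda F(\sigma,\lambda_0)=\dot h'(\sigma)\neq0$. This is exactly the algebraic simplicity of the eigenvalue recorded earlier in this appendix, here made explicit. For the general weight $p>0$ of\re{appSL} one carries $p$ through the same computation, which only multiplies the identity by the positive factor $p(\sigma)$ and leaves the conclusion intact.

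With transversality in hand, the implicit function theorem provides, near any point $(\sigma_0,\lambda_0)$ with $F(\sigma_0,\lambda_0)=0$, a neighbourhood $N$ on which the solutions of $F=0$ form a single smooth curve $\lambda=\Lambda(\sigma)$. To finish, I would suppose for contradiction that $\lambda_1(\sigma_0)=\lambda_2(\sigma_0)=:\lambda_0$. Both branches are continuous at $\sigma_0$ with value $\lambda_0$ and satisfy $F\big(\sigma,\lambda_i(\sigma)\big)=0$, so for $\sigma$ in a small interval $(\sigma_0-\delta,\sigma_0+\delta)$ the points $\big(\sigma,\lambda_i(\sigma)\big)$ lie in $N$, forcing $\lambda_i(\sigma)=\Lambda(\sigma)$ for $i=1,2$. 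In particular $\lambda_1$ and $\lambda_2$ coincide on $(\sigma_0-\delta,\sigma_0)\subset(a,\sigma_0)$, contradicting the hypothesis that they differ throughout $(a,\sigma_0)$. Hence $\lambda_1(\sigma_0)\neq\lambda_2(\sigma_0)$, which is the desired non-crossing of distinct continuous eigenvalue branches.
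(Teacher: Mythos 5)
Your proof is correct, and it takes a genuinely different route from the paper's. The paper argues by contradiction using spectral machinery: if the two branches met at $\sigma_0$ with common value $\lambda_0$ and normalised eigenfunction $u_0$, the eigenfunction-continuity theorem of Kong and Zettl (the same Theorem 3.2 underlying Proposition~\ref{theo32}) would supply normalised eigenfunctions $u_1(\cdot,\sigma)$ and $u_2(\cdot,\sigma)$ of both branches converging to $u_0$, while orthogonality of eigenfunctions belonging to the distinct eigenvalues $\lambda_1(\sigma)\neq\lambda_2(\sigma)$ forces $\int_a^\sigma |u_1-u_2|^2\,\textrm{d}x=2$ for every $\sigma<\sigma_0$, so the difference cannot tend to zero. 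You instead recast the eigenvalue condition as the zero set of the shooting function $F(\sigma,\lambda)=h'(\sigma;\lambda)$, prove the transversality $\partial_\lambda F\neq 0$ at any zero via the Wronskian identity $\frac{\textrm{d}}{\textrm{d}x}\bigl[h'\dot h-h\dot h'\bigr]=h^2$ (correctly generalised to $\frac{\textrm{d}}{\textrm{d}x}\bigl[p\,(h'\dot h-h\dot h')\bigr]=h^2$ for the weight $p>0$ in\re{appSL}, yielding $-p(\sigma)h(\sigma)\dot h'(\sigma)=\int_a^\sigma h^2\,\textrm{d}x>0$ at an eigenvalue), and conclude by the implicit function theorem; your endpoint bookkeeping and the observation that $h(\sigma)\neq 0$ by uniqueness for the initial value problem are both sound, and your identification of eigenvalues with zeros of $h'(\sigma;\lambda)$ is valid because any eigenfunction has $y(a)\neq 0$ and is therefore a multiple of $h$. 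What your approach buys: it is essentially self-contained (only the parameter-dependence of ODE solutions already invoked in Proposition~\ref{outoftheblue} is needed, with no appeal to the Kong--Zettl eigenfunction theorem), it makes the algebraic simplicity of the eigenvalues explicit rather than citing it, and it delivers a strictly stronger local conclusion --- near any eigenvalue the branches form a single $C^1$ graph $\lambda=\Lambda(\sigma)$, which independently recovers the smooth dependence on $\sigma$ that the paper cites from the literature. What the paper's approach buys: it is shorter given the cited results, and the orthogonality argument survives in settings (such as coupled boundary conditions) where eigenvalues need not be simple and your transversality step would fail, since orthogonality of eigenfunctions for distinct eigenvalues of a self-adjoint problem requires no simplicity.
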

\begin{proof}
This is a consequence of~\cite{koze96} Theorem 3.2  which states that if $\lambda(\sigma_0)$ is an eigenvalue of\re{appSL}, and $u(.,\sigma_0)$ a normalised eigenfunction of $\lambda(\sigma_0)$, then there exist normalised eigenfunctions $u(.,\sigma)$ of $\lambda(\sigma)$ such that 
\be{}
u(.,\sigma)\to u(.,\sigma_0),\qquad \textrm{as }\sigma\to\sigma_0,
\ee
uniformly on any compact subinterval of $(a,\sigma_0]$.

As a result, if there existed a $\sigma_0$ such that $\lambda_1(\sigma_0) =\lambda_2(\sigma_0)=\lambda_0$, we could define an associated normalised eigenfunction $u_0$. But then by the theorem mentioned above there would also exist normalised eigenfunctions $u_1(.,\sigma)$ of $\lambda_1(\sigma)$ and $u_2(.,\sigma)$ of $\lambda_2(\sigma)$ such that $u_1\stackrel{\sigma\to\sigma_0}{\to}u_0$ and $u_2\stackrel{\sigma\to\sigma_0}{\to}u_0$. In particular, this would imply that $\lim_{\sigma\to\sigma_0} \int_a^\sigma ||u_1(x,\sigma)-u_2(x,\sigma)||^2\textrm{d}x = 0$. However for any $\sigma<\sigma_0$, the branches are different: $\lambda_1(\sigma)\neq\lambda_2(\sigma)$ and the associated eigenfunctions must be orthogonal. Therefore $\int_a^\sigma ||u_1(x,\sigma)-u_2(x,\sigma)||^2\textrm{d}x =2$ and the integral can not vanish in the limit $\sigma\to\sigma_0$.\qed
\end{proof}


\begin{thebibliography}{}

\bibitem{ce83}
Cesari, L.:
\newblock {Optimization -- Theory and Applications}.
\newblock Springer-Verlag (1983).

\bibitem{dahe93}
Dauge, M. and Helffer, B.:
\newblock {Eigenvalues problems. I. Neumann Problem for Sturm-Liouville
  Operators}.
\newblock {\em Journal of Differential Equations}, 104:243--262 (1993).

\bibitem{gefo00}
Gelfand, I.~M. and Fomin, S.~V.:
\newblock {Calculus of Variations}.
\newblock Dover (2000).

\bibitem{go01}
Goriely, A. :
\newblock {Integrability and Nonintegrability of Dynamical Systems}.
\newblock World Scientific Publishing Company (2001).

\bibitem{ha80}
Hale, J.K.:
\newblock {Ordinary Differential Equations}.
\newblock R. E. Krieger Publishing Company (1980).

\bibitem{ka80}
Kato, T.:
\newblock {Perturbation theory for Linear Operators}.
\newblock Springer (1980).

\bibitem{kepe10}
Kelly, W.~G. and Peterson, A.~C.:
\newblock {The Theory of Differential Equations}.
\newblock Springer (2010).

\bibitem{koze96}
Kong, Q. and Zettl, A.:
\newblock {Eigenvalues of Regular Sturm-Liouville Problems}.
\newblock {\em Journal of Differential Equations}, 131:1--19 (1996).

\bibitem{ma09}
Manning, R.:
\newblock Conjugate points revisited and neumann-neumann problems.
\newblock {\em SIAM Rev.}, 51(1)  (2009).

\bibitem{mo51}
Morse, M.:
\newblock {Introduction to Analysis in the large}.
\newblock Institute for Advanced Study, Princeton (1951).

\bibitem{nigo99}
Nizette, M. and Goriely, A.:
\newblock Towards a classification of {E}uler-{K}irchhoff filaments.
\newblock {\em J. Math. Phys.}, 40:2830--2866 (1999).

\end{thebibliography}
\end{document}